\documentclass[11pt]{amsart}
\usepackage{times}
\usepackage[T1]{fontenc}
\usepackage{amssymb, amsthm, amsmath}
\usepackage{mathrsfs}
\usepackage{mathtools}
\usepackage{comment}
\usepackage[active]{srcltx}

\usepackage{todonotes}

\usepackage{setspace}
\usepackage{geometry}

\usepackage{hyperref}

\DeclareMathOperator{\acl}{acl}
\DeclareMathOperator{\dcl}{dcl}

 \DeclareMathOperator{\dom}{dom}

\DeclareMathOperator{\eq}{eq}

\DeclareMathOperator{\tp}{tp}

\DeclareMathOperator{\cl}{cl}

\newtheorem{introtheorem}{Theorem}

\newtheorem{theorem}{Theorem}[section]

\newtheorem{claim}{Claim}[theorem]

\newtheorem{corollary}[theorem]{Corollary}

\newtheorem{fact}[theorem]{Fact}
\newtheorem{lemma}[theorem]{Lemma}

\newtheorem{proposition}[theorem]{Proposition}

\newtheorem*{gen-dif}{\fbox{{\large A}} \hypertarget{Agen-dif}{Gen-Dif}}

\newtheorem*{min-balln}{\fbox{{\large A}} \hypertarget{Amin-ball}{Cballs}}

\theoremstyle{definition}
\newtheorem{definition}[theorem]{Definition}
\newtheorem{example}[theorem]{Example}
\newtheorem{remark}[theorem]{Remark}
\newtheorem{question}[theorem]{Question}
\newtheorem{notation}[theorem]{Notation}

\newcommand{\Nn}{{\mathbb{N}}}

\newcommand{\CD}{{\mathcal D}}

\newcommand{\CK}{{\mathcal K}}
\newcommand{\CN}{{\mathcal N}}

\newcommand{\CM}{{\mathcal M}}

\newcommand{\CC}{{\mathcal C}}

\newcommand{\CO}{{\mathcal O}}

\newcommand{\0}{\emptyset}

\newcommand{\rest}{\upharpoonright}

\renewcommand{\phi}{\varphi}

\newcommand{\widebar}[1]{\mkern 4mu\overline{\mkern-4mu#1\mkern-2mu}\mkern 2mu}

\def\sub{\subseteq}

\def\conv{\overline{\mathrm{conv}}}
\def\Im{\mathrm{Im}}

\newenvironment{claimproof}[1][\proofname]
{%
	\proof[#1]%
	\renewcommand*\qedsymbol{$\square$ (claim)}%
}
{%
	\endproof%
}

\title{On differentiability in Weakly o-minimal expansions of fields}

\author{Juan Pablo Acosta}
\address{Department of Mathematics, University of Pamplona, Pamplona, Colombia}
\email{juan.acostal@unipamplona.edu.co}

\author{Yatir Halevi}
\address{Faculty of Mathematics, Technion - Israel Institute of Technology, Haifa, Israel}
 \email{yatirh@math.technion.ac.il}

\author{Assaf Hasson}
\address{Department of Mathematics, Ben Gurion University of the Negev, Be'er-Sheva, Israel}
\email{hassonas@math.bgu.ac.il}

\author{Will Johnson}
\address{School of Mathematical Sciences, Fudan University, Shanghai, China}
\email{willjohnson@fudan.edu.cn}

\author{Ya'acov Peterzil}
\address{Faculty of Natural Sciences, Department of Mathematics, University of Haifa, Haifa, Israel}
\email{kobi@math.haifa.ac.il}

\date{\today}
\begin{document}

\thanks{The second and third authors were partially supported by ISF grant No. 555/21. The second author was further supported by The Israel Science Foundation (grant No. 295/26). The fourth author was supported by the National Natural Science Foundation of China (grant No. W2532009).  The fifth author was partially supported by ISF grant 290/19.}

	\maketitle
\begin{abstract}
    We show that every definable function in a weakly o-minimal expansion of a field $K$ is generically differentiable, where the derivative takes values in the Cauchy completion of $K$,  i.e. it is a definable nonvaluational cut. As a consequence, every definable field is definably isomorphic to either  $K$ or its algebraic closure.
\end{abstract}
    
\section{Introduction}

O-minimal structures are linearly ordered structures in which every unary definable subset is a finite union of points and intervals. Weakly o-minimal structures were introduced by Dickmann \cite{dickman}, as a generalization of o-minimality which allows for valuation theory: They are linearly ordered structures in which every unary definable subset is a finite union of convex sets. A theory of a linearly ordered structure is called weakly o-minimal if every elementary equivalent structure is such. Macpherson, Marker and Steinhorn initiated a systematic study of weakly o-minimal structures and theories in \cite{MacMaSt}, proving that weakly o-minimal ordered fields are real closed, and listing several problems, most of which remained open until recently.

In a recent preprint, \cite{Joh:exchange}, the fourth author answers two of the main questions of that paper, in the setting of weakly o-minimal theories expanding fields. He proves that model theoretic $\acl(\cdot)$
satisfies the exchange principle. He also constructs a weakly o-minimal theory of fields equipped with a definable function that is nowhere differentiable, showing that weakly o-minimal fields may fail generic differentiability. 

 
This example raises, therefore, the question of whether definable functions in weakly o-minimal expansions of fields exhibit any general tameness properties beyond generic continuity. The main result of the current paper implies that this example is the worst that could happen, in the following sense: Definable functions are generically differentiable, if one is willing to let the derivative take values in a larger field, as explained below.

 
 Recall that a cut $ (C_1,C_2)$ in a real closed 
 field  $K $ is called {\em valuational} if $C_1$ is additively invariant under some nonzero elements of $K$; it is called {\em nonvaluational} otherwise. 
The collection of all nonvaluational cuts in $K$, which we denote by $\widehat K$, extends $K$ and  can be identified with the Cauchy completion of $K$, thus it is a real closed field itself. When we restrict to the {\bf definable} nonvaluational cuts, we obtain a  real closed subfield  $\widebar K_{nv}\sub \widehat K$ containing $K$; it is not a definable object but rather a collection of definable sorts. See Section \ref{s:PreNot}.

 In the example from \cite{Joh:exchange}, for every $x\in K$, the expected derivative $\lim_{h\to 0}\frac{f(x+h)-f(x)}{h}$ does not belong to $K$, but instead determines a nonvaluational cut in $K$ and thus belongs to $\widebar K_{nv}\setminus K$.
Our main theorem (Theorem \ref{T:n=1 gen-dif}), shows:
\begin{introtheorem}\label{T: intro gen diff}
    Let $\CK$ be an expansion of a real closed field with a weakly o-minimal theory. Let $f: I\to K$ be a definable function. Then, away from finitely many points in $I$, $f'(a)$ exists in $\widebar K_{nv}$: We say that  $f$ is generically $nv$-differentiable.

    In particular, if $\CK$ is definably Cauchy complete (namely has no definable nonvaluational cuts) then every definable function is generically differentiable (in the usual sense).
    \end{introtheorem}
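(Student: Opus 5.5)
We first reduce to a continuous, strictly monotone function. By the weakly o-minimal monotonicity theorem of Macpherson--Marker--Steinhorn, after removing finitely many points we may split $I$ into finitely many convex definable sets on each of which $f$ is constant or continuous and strictly monotone. The constant pieces are trivial, so we work on a piece $J$ where $f$ is, say, strictly increasing and continuous. Our goal is to show that for all but finitely many $a\in J$ the difference quotient $q_a(h)=\frac{f(a+h)-f(a)}{h}$ converges, as $h\to 0$, to a definable nonvaluational cut, that is, to an element of $\widebar K_{nv}$. The set of bad points is definable, and a definable subset of $K$ is a finite union of convex sets. So it is enough to show that the bad set contains no infinite convex subset, and we argue by contradiction on an interval $J_0$ of bad points.

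The second step defines one-sided upper and lower derivatives as definable cuts. For $a\in J$ and each sign, put $D^+f(a)=\limsup_{h\to 0^+}q_a(h)$ and $D_+f(a)=\liminf_{h\to0^+}q_a(h)$, with the analogous quantities on the left. These are defined as definable cuts in $K$: the lower class is $\{c : \forall \epsilon>0\,\exists h\in(0,\epsilon)\ q_a(h)>c\}$. By weak o-minimality applied to the one-variable function $h\mapsto q_a(h)$, this function is eventually monotone as $h\to 0^+$. Hence upper and lower limits agree, and each one-sided limit exists as a definable cut $L^\pm(a)$, possibly $\pm\infty$. This leaves three things to show for generic $a$: $L^+(a)=L^-(a)$, the common cut is not $\pm\infty$, and it is nonvaluational.

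The third step is a mean-value-style comparison. Here we would use the structure of $\widehat K$ as a real closed field containing $K$ densely, so that inequalities between cuts can be tested against elements of $K$. The argument has three parts.
\begin{itemize}
\item \emph{Ruling out infinite derivatives on an interval.} If $L^+(a)=+\infty$ on an interval, then for any $c\in K$ the function $g(x)=f(x)-cx$ is locally increasing at each point, hence increasing on the interval. Taking an affine interpolant through the endpoints gives a contradiction, as in the o-minimal proof.
\item \emph{Ruling out $L^+\neq L^-$ on an interval.} If $L^+(a)>L^-(a)$ on an interval, then by density we can pick a single $c\in K$ strictly between them on a subinterval; this uses a definable choice of $c$ plus generic continuity of the cut-valued map. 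Then $g=f-cx$ has a strict local minimum at every point of the subinterval, which is impossible for a definable function by monotonicity.
\item \emph{Nonvaluational.} Suppose $L(a)$ is a valuational cut, invariant under addition of some $\delta>0$, for $a$ in an interval. Choose $c$ in the lower class with $c+\delta$ in the upper class. Then on small neighbourhoods $q_a(h)$ stays within the same additive $\delta$-coset cut, while no element of $K$ separates. Comparing $f-cx$ and $f-(c+\delta)x$ on a subinterval, the first is locally increasing and the second locally decreasing, and both are definable. Moreover the cut $L(a)$ must vary definably with $a$, so by weak o-minimality applied to the definable family of cuts (the $a$-dependence is monotone/locally constant generically) we obtain a single $c$ working uniformly on a subinterval. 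Then $f-cx$ is increasing and $f-(c+\delta)x$ is decreasing there, which gives $c\le q\le c+\delta$ for all difference quotients. This pins $L(a)$ between $c$ and $c+\delta$ but does not yet contradict valuationality.
\end{itemize}

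The main obstacle is this last point. To finish it, I would iterate with $\delta$ replaced by smaller elements of the invariance group, using the fact that the additive stabilizer of a definable cut is a definable convex subgroup $G$. The uniform bounds then show that the difference quotients converge modulo every $\epsilon\in G$, so $f$ induces a map on $K/G$-type sorts. I expect the genuine difficulty to be showing that this situation forces a contradiction, namely that it produces a definable function nowhere locally affine modulo $G$ in a way forbidden by weak o-minimality. This is presumably where the earlier machinery of the paper (exchange for $\acl$ from \cite{Joh:exchange} and dimension theory on $\widebar K_{nv}$-sorts) is needed, and I would attack it via a generic-point argument over a small model.

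Finally, the "in particular" clause is immediate: if $\CK$ has no definable nonvaluational cuts then $\widebar K_{nv}=K$.
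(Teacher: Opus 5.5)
\textbf{There is a genuine gap, at the step that carries the theorem's content: showing the lateral derivatives are nonvaluational.} You flag this yourself, but your sketch cannot be completed as written.

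If $L(a)$ is invariant under $\delta>0$, its lower class $C$ satisfies $C+\delta=C$. So there is no $c\in C$ with $c+\delta$ in the upper class. Iterating with smaller elements of the invariance group fails for the same reason.

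More to the point, if $a\mapsto L(a)$ were locally constant, uniform two-sided bounds on the difference quotients would quickly give a contradiction: a valuational cut is irrational, so it cannot be pinned down by values $m(x,y)\in K$. The real difficulty is to rule out that $a\mapsto L(a)$ is strictly monotone on an interval, and nothing in your sketch addresses that.

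The missing ingredient is Theorem~\ref{T:func into val is lc}: every definable map from $K$ into valuational cuts is locally constant. This is the substantial part of the paper. It is proved with the generalized inverse $f_\downarrow$ (Lemmas~\ref{L: valuational implies loc constant} and~\ref{L: double gf}) and rests on Theorem~\ref{T:no lc into nv v2}, which strengthens exchange to locally constant maps into $\widebar K_{nv}$. With it the step is short. If $f'_+$ took valuational values near a generic $a$, it would be locally constant there. Then by Lemma~\ref{L:f' constant is locally affine}, $f$ would be locally $K$-affine and $f'_+(a)\in K$, a contradiction.

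\textbf{Your first bullet has a second gap, which also affects the third.} You infer that if $f-cx$ is locally increasing at each point of an interval, it is increasing on it. That is the o-minimal argument, and it uses definable completeness, which fails here. Take a real closed valued field with valuation ring $\CO$ and $t>0$, $t\notin\CO$. The function equal to $x$ on $\{x:\exists y\in\CO\ x<y\}$ and to $x-t$ elsewhere is continuous and locally strictly increasing on all of $K$. Yet it takes the value $1$ at $1$ and $-t/2$ at $t/2$. Repairing this by passing to finitely many convex pieces would amount to piecewise monotonicity, which the paper explicitly leaves open. (Your opening reduction has the same overstatement, but there it is harmless, since differentiability is local.)

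The paper instead gets control on a small interval from local convexity at generic points, Proposition~\ref{Johnson convexity}. On an interval where $f$ is convex, $f'_+(x)\le m(x,z)\le f'_+(z)$ for $x<z$. This at once excludes $f'_\pm(a)=\pm\infty$ and shows that a constant $f'_+$ forces $f$ to be $K$-affine.

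\textbf{Your second bullet is essentially the paper's argument, with one correction.} Definable choice is not available in general: picking a point in each coset of $\CO$ would give a locally constant map with infinite image, contradicting exchange. Instead, choose $d$ strictly between $f'_-(a)$ and $f'_+(a)$ with $a$ still generic over $d$; exchange (Fact~\ref{F:exchange}) guarantees such $d$ exists. Then apply monotonicity to $f-dx$ at $a$.
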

    
In Theorem \ref{T:gen-dif}, we prove a generalization to functions of several variables, asserting that for a definable function $f$ with open domain the Jacobian matrix of partial derivatives at a generic point  provides a $\widebar K_{nv}$-linear approximation of $f$. As a result we show that definable functions satisfy variants of some classical results from differential calculus: A local version of the mean value theorem (Theorem \ref{T: MVT}) and  a function of locally constant derivative is piecewise $K$-affine (Corollary \ref{C:piecewise}).

Towards the proof of the above result we first establish two theorems of independent interest, about definable families of cuts. Recall from \cite{MacMaSt}, that  $\widebar K$, the definable Dedekind completion of $K$, is the collection of all definable cuts in $\CK$. While $\widebar K_{nv}$ is obviously  a subset of $\widebar K$, the latter does not admit a structure of a field or a group. We prove (Theorem \ref{T:no lc into nv v2} and Theorem \ref{T:func into val is lc}):

\begin{introtheorem} 
   Let $\CK$ be an expansion of a real closed field $K$ whose theory is weakly o-minimal.  Let $f:K\to \widebar K$ be a definable function. Then: 
   \begin{enumerate}
       \item If $\Im(f)$ consists of nonvaluational cuts and $f$ is locally constant  then $Im(f)$ is finite.
       \item If $\Im(f)$ consists of valuational cuts then $f$ is locally constant. 
      \end{enumerate}  
\end{introtheorem}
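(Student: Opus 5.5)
Both parts rest on the same dichotomy. A definable cut $c=(C_1,C_2)$ carries a definable additive stabilizer
\[
G_c=\{t\in K: t+C_1=C_1\},
\]
which is a convex subgroup of $(K,+)$. The cut $c$ is valuational exactly when $G_c\neq\{0\}$.

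\emph{Part (1).} I argue by contradiction. Suppose $f$ is locally constant, takes only nonvaluational values, and has infinite image. By weak o-minimality, the fibres $f^{-1}(c)$ are definable families of sets, each a finite union of convex sets; by local constancy they are open. A compactness and uniform finiteness argument then lets me shrink to an interval $J$ on which each fibre is a single convex open set, giving infinitely many distinct fibres. This produces a definable order-preserving or order-reversing injection from a definable infinite set of "convex pieces" into $\widebar K_{nv}$. After passing to a suitable interval, I get a definable map $g$ from an interval $J'\subseteq K$ (say $J'=$ the set of left endpoints of the fibres, possibly themselves cuts, so some care is needed) to nonvaluational cuts, which is constant on convex pieces. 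The key point is that the set of these pieces is "discrete-like". Since each value is nonvaluational, it can be approximated to arbitrary precision by elements of $K$. Using the field structure, the map $a\mapsto$ (approximations of the value of the piece containing $a$) lets me define, for each $\epsilon>0$, a partition of $J$ into pieces on which $f$ varies by less than $\epsilon$. My first attempt will be to show that the union of fibre boundaries is a definable set of boundary cuts that must contain an infinite discrete family. If I can then pick an element of $K$ from each gap uniformly (using nonvaluationality: a cut with trivial stabilizer admits definable approximations $a_\epsilon\in K$ with $|a_\epsilon-c|<\epsilon$), I obtain an infinite discrete definable subset of $K$. That contradicts weak o-minimality, since definable subsets of $K$ are finite unions of convex sets.

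\emph{Part (2).} Let $f$ take valuational values. Then $G_{f(x)}$ is a nontrivial convex subgroup for every $x$. By weak o-minimality and monotonicity for functions into $\widebar K$ (from \cite{MacMaSt}), $f$ is piecewise continuous and monotone, so I may restrict to an interval where $f$ is, say, strictly increasing and continuous, and aim for a contradiction. The choice of a nonzero $t(x)\in G_{f(x)}$ is not definable, but the set $\{(x,t): t\in G_{f(x)}\}$ is definable, so the function $x\mapsto G_{f(x)}$ takes values in definable convex subgroups. I would then show the following: strictly increasing valuational cuts satisfy $f(x)+G_{f(x)}=f(x)$, so each $f(x)$ is a union of cosets of $G_{f(x)}$. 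On a small interval, continuity forces the cuts $f(x)$ for $x$ in a neighbourhood to lie in one coset region. Using a generic $x$ and the field structure (rescaling by $t$ to normalize $G$ via $\acl$-exchange from \cite{Joh:exchange}), I then derive that $f$ is constant near generic points. Local constancy everywhere then follows by definability and the finite exceptional set being handled by the same argument applied on either side.

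The main obstacle I expect is part (2): controlling how the stabilizers $G_{f(x)}$ vary with $x$ and ruling out a strictly monotone continuous path through valuational cuts. I would first try to prove that $x\mapsto G_{f(x)}$ is locally constant (convex subgroups form a "discrete-like" family), and then use translation invariance to show that a monotone path within a fixed coset structure is impossible.
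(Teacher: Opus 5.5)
There are genuine gaps in both parts. In (1), the whole argument rests on the step where you ``pick an element of $K$ from each gap uniformly'' using approximations $a_\epsilon\in K$ of the nonvaluational values. Nonvaluationality of $c$ only tells you that $(c-\epsilon,c+\epsilon)\cap K\neq\emptyset$ for each $\epsilon>0$. It does not give a definable selection $c\mapsto a_\epsilon(c)$, and weakly o-minimal expansions of fields do not have definable Skolem functions. For instance, in a real closed valued field there is no definable choice of a point in each coset of $\mathcal O$, since such a choice would give a locally constant map $K\to K$ with infinite image.

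The object you aim to produce, an infinite discrete definable subset of $K$, is exactly what weak o-minimality forbids. So the entire content of the theorem has been moved into this unjustified selection. Even granting a selection, you would only get a locally constant map $x\mapsto a_\epsilon(f(x))$ from $K$ to $K$ with infinite image. That merely reduces (1) to Johnson's exchange theorem (Fact~\ref{F:exchange}), which is the special case $\Im(f)\subseteq K$ of (1) and is itself the hard part.

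Multiplication is never used concretely in your sketch, yet it must be: the paper leaves open whether even the additive analogue holds under exchange (Question~\ref{Q: Is exchange enough 1}). The paper uses the field through divided differences. First it arranges convex fibres, $f$ weakly increasing, and $\Im(f)$ dense in its convex hull. Local constancy plus density then give $0\in\cl(D^s(a))$, where $D^s(a)=\{m_f(a,y): f(a)<f(y)<s\}$. Hence $[0,\epsilon]\subseteq\cl(D^s(x))$ uniformly near a generic $a$, by Corollary~\ref{C:interval inside closure} and Lemma~\ref{L: Johnson A7.2}. In a saturated extension this yields $x_0<x_1<\cdots$ with $m_f(x_0,x_{2n})$ increasing, $m_f(x_0,x_{2n+1})$ decreasing, and some $d\in K$ separating the two sequences. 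Then $\{x: f(x)<d(x-x_0)+f(x_0)\}$ has infinitely many convex components, which is impossible.

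In (2) there is no actual mechanism. ``Rescaling by $t$ to normalize $G$'' needs a definable choice of $t\in G_{f(x)}\setminus\{0\}$, which you yourself note is unavailable. The claim that ``continuity forces the cuts into one coset region'' does not follow either. Valuational cuts are irrational, and invariant cuts can accumulate on one another from above: in a real closed valued field, the cuts $\sup(s\mathcal O)$ with $s\mathcal O\supsetneq t\mathcal O$ are all $t\mathcal O$-invariant and accumulate on $\sup(t\mathcal O)$. So continuity in $\widebar K$ alone does not exclude strict monotonicity.

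Your proposed first step, local constancy of $x\mapsto G_{f(x)}$, is circular, because $x\mapsto\sup G_{f(x)}$ is itself a definable map into valuational cuts, i.e.\ an instance of (2). Your (2) also makes no use of (1), whereas in the paper (1) drives (2). The missing idea is the generalized inverse $f_\downarrow(y)=\sup\{x\in\dom(f): f(x)<y\}$ of a strictly increasing $f$, used as follows.
\begin{enumerate}
\item By \cite[A.7]{Joh:exchange} there is a single $\epsilon>0$ with $f(x)+\epsilon=f(x)$ on an interval. This gives $f_\downarrow(y)=f_\downarrow(y+\epsilon)$, so $f_\downarrow$ is generically locally constant (Lemma~\ref{L: valuational implies loc constant}).
\item By (1), the values of $f_\downarrow$ are valuational apart from finitely many values. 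The same lemma then makes $(f_\downarrow)_\downarrow$ generically locally constant.
\item Using exchange, $(f_\downarrow)_\downarrow=f$ off a finite set (Lemmas~\ref{L: gaps in image} and~\ref{L: double gf}), contradicting strict monotonicity of $f$.
\end{enumerate}
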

The first clause  generalizes the theorem from \cite{Joh:exchange} on exchange (as the failure of exchange in this setting is equivalent to having a definable locally constant function from $K$ into $K$ with infinite image).

As a corollary of Theorem \ref{T: intro gen diff} we prove (Theorem \ref{T:def fields}):
\begin{introtheorem}
   Let $\CK$ be an expansion of a real closed field whose theory is weakly o-minimal. If $F\sub (\widebar K_{nv})^n$ is a definable field then $F$ is definably isomorphic to either $K$ or its algebraic closure $K(\sqrt{-1})$. In particular, this holds if $F\subseteq K^n$ is a definable field.
   \end{introtheorem}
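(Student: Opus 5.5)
The plan is to follow the o-minimal template for definable fields (as in Otero–Peterzil–Pillay): linearize using generic differentiability, then appeal to the classification of fields carrying compatible differentiable structure.

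Let $F\subseteq (\widebar K_{nv})^n$ be a definable field. First I reduce to an open definable subset of $K^m$. Weak o-minimality with exchange for $\acl$ (from \cite{Joh:exchange}) gives a good dimension theory. A standard generic-point argument, translating by field elements, then gives two things: the topology on $F$ inherited from the ambient sorts makes $F$ a topological field (at least generically), and $F$ is locally, near generic points, in definable bijection with an open subset of $K^m$, $m=\dim F$. To handle the sorts of $\widebar K_{nv}$, I would use that each such sort is locally identified with $K$ modulo an infinitesimal-free parametrization. Alternatively, I would pass to a definable chart of $F$ living in $K^m$ via the section-of-cuts coding in Section~\ref{s:PreNot}. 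I am unsure exactly how this reduction goes.

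Next I linearize. For $a\in F$, multiplication $\mu_a\colon x\mapsto ax$ and addition are definable maps between open subsets of $K^m$ in local charts. By Theorem~\ref{T:gen-dif} they are generically $nv$-differentiable, with Jacobians in $M_m(\widebar K_{nv})$ that linearly approximate them. Using homogeneity (translations and multiplications are definable automorphisms of the additive and multiplicative groups), I transfer differentiability from generic points to every point. The map $a\mapsto d(\mu_a)_0$, computed in a chart at $0$, is then an injective ring homomorphism $\rho\colon F\to M_m(\widebar K_{nv})$. Additivity in $a$ comes from distributivity, and multiplicativity from the chain rule. Injectivity holds because $d(\mu_a)_0$ is invertible for $a\neq 0$, by the chain rule applied to $\mu_{a^{-1}}$.

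So $\rho(F)$ is a field of $m\times m$ matrices over the real closed field $\widebar K_{nv}$. Its $\widebar K_{nv}$-span $L$ is a commutative, finite-dimensional field extension of $\widebar K_{nv}$. Since $\widebar K_{nv}$ is real closed, $L$ is $\widebar K_{nv}$ or $\widebar K_{nv}(\sqrt{-1})$. One then has to show that $F$ itself corresponds to $K$ or $K(\sqrt{-1})$ rather than some intermediate field. For this I would use Corollary~\ref{C:piecewise} and the local mean value theorem (Theorem~\ref{T: MVT}). Consider the definable field embedding of the prime-field-generated part: the characteristic-$0$ copy of $\mathbb{Q}$ in $F$ extends to a definable map. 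More concretely, fix a $1$-dimensional definable subfield direction and build a definable map $K\to F$ with constant derivative; it is piecewise $K$-affine and hence yields a definable isomorphism of $K$ onto a subfield $F_0\le F$ with $\dim F_0=1$. The dimension count $[F:F_0]=m\in\{1,2\}$ follows from $L$. Finally, $F$ is then a definable finite extension of a copy of $K$, so it is definably isomorphic to $K$ or $K(\sqrt{-1})$.

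The main obstacle is this last step: producing a definable copy of $K$ inside $F$ (equivalently, showing that $\rho(F)\cap$ scalars is definably $K$) from derivative information valued in $\widebar K_{nv}$. My first attempt would be to take a definable one-dimensional curve through $1$ in $F$ along which $\rho$ is scalar, and use Corollary~\ref{C:piecewise} to show the curve is a $K$-line.
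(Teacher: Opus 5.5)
\textbf{The proposal has genuine gaps.} Your overall architecture matches the paper's: linearize via Theorem \ref{T:gen-dif}, use the chain rule to embed $F$ as a ring into $M_m(\widebar K_{nv})$, then identify the image. But the two places where real work is needed are left open, and your plan for the second one does not work as stated.

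\textbf{The final step.} You yourself flag this as the main obstacle. Your plan to ``fix a one-dimensional definable subfield direction'' and build a map $K\to F$ with constant derivative presupposes the very copy of $K$ you are trying to produce. Neither Corollary \ref{C:piecewise} nor Theorem \ref{T: MVT} supplies that copy. The missing idea is short.
\begin{itemize}
\item $F_0:=\rho^{-1}(K\cdot\mathrm{Id})$ is a definable subfield of $F$, and $\rho$ identifies it definably with a subfield of $K$.
\item Since $\rho(1)=\mathrm{Id}$, this subfield contains $\mathbb{Q}$, so it is infinite.
\item By weak o-minimality it therefore contains an interval, so it is open. An open subfield of $K$ is all of $K$: it contains some $(-\epsilon,\epsilon)$, hence by inversion every $|x|>1/\epsilon$, hence everything.
\item Thus $K\cdot\mathrm{Id}\subseteq\rho(F)$, so $\rho(F)$ is a definable field extension of $K$. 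Finite dp-rank forces $[\rho(F):K]<\infty$, since otherwise $K^N$ would embed definably for every $N$.
\item As $K$ is real closed, $F$ is definably isomorphic to $K$ or $K(\sqrt{-1})$.
\end{itemize}
Your intermediate claim that the $\widebar K_{nv}$-span of $\rho(F)$ is a field is false in general. For example, $\mathbb{Q}(\sqrt2)$ embedded in $M_2(\mathbb{R})$ spans a copy of $\mathbb{R}\times\mathbb{R}$. With the argument above you do not need it.

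\textbf{The reduction to $K$-charts.} You say you are unsure how to pass from $F\subseteq(\widebar K_{nv})^n$ to something living in $K^m$. The route you sketch is not justified: a topology on $F$ making it a topological field locally in bijection with open subsets of $K^m$, plus transferring differentiability to every point ``by homogeneity''.

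The paper uses Lemma \ref{L: F in nv is lsi to K}: every infinite sort of nonvaluational cuts has an infinite definable subset that injects definably into $K$. This is exactly where Theorem \ref{T:no lc into nv v2} is essential; otherwise the natural map from $K$ onto such a set could be locally constant. Hence $F$ is locally strongly internal to the SW-uniformity $K$.

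One then runs the proof of \cite[Theorem 4.21]{HaHaPeVF}. It produces a type-definable infinitesimal subgroup $\nu\le(F,+)$, invariant under multiplication by elements of $F$ and carrying a $\widebar K_{nv}$-differential structure. The Jacobians at $0$ of $x\mapsto ax$ on $\nu$ give the embedding $\rho$. No manifold structure on $F$ itself is needed.
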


\subsection*{AI Statement.} This article was prepared without any mathematical assistance from AI.

\section{Preliminaries and Notation}\label{s:PreNot}
We set up some notation and terminology, and review some of the basic facts concerning the main objects of interest in the present paper.  Throughout, structures are denoted by  calligraphic capital letters, $\CM$, $\CN$, $\CK$ etc., and their respective universes by the corresponding Latin letters, $M$, $N$ and $K$. 

Tuples from a structure $\CM$ are always assumed to be finite, and are denoted by small Roman characters $a,b,c,\dots$. We apply the standard model theoretic abuse of notation writing $a\in M$ for $a\in M^{|a|}$. Variables will be denoted $x,y,z,\dots$ with the same conventions as above. We do not distinguish notationally between tuples and variables belonging to different sorts, unless some ambiguity can arise. Capital Roman letters $A,B,C,\ldots$ usually denote small subsets of parameters from $\CM$. As is standard in model theory, we write $Ab$ as a shorthand for $A\cup \{b\}$. In the context of definable groups we will, whenever confusion can arise, distinguish between, e.g., $Agh:=A\cup\{g,h\}$ and $A\,g\!\cdot \! h:=A\cup \{g\!\cdot\! h\}$. 

Given a structure $\CM$ by a "definable set in $\CM$" we mean a subset of $S^n$ for some, \textit{possibly imaginary} sort $S$; definable is with parameters (unless explicitly stated otherwise). By a definable subset of $M^n$ we mean an actual definable subset of a power of the home-sort. At times we will want to emphasize that a set is an interpretable set in $\CM$.

A linearly ordered structure $\CM$ is\textit{ weakly o-minimal} if every definable subset of $M$ is a finite union of convex sets. Throughout all weakly o-minimal structures will be $\aleph_0$-saturated, implying that weak o-minimality is a property of the theory of the structure. Unless specifically stated otherwise, the topology on $M$ is the order topology, and on $M^n$ it is the corresponding product topology. 

In the present paper the ambient weakly o-minimal structure is mostly assumed to be densely ordered.\footnote{This assumption is applied implicitly in references to the literature,  much of which  assumes density of the order underlying  weakly o-minimal structures.} In specific instances we may forgo this assumption and then clearly say so. Indeed, in general, densely ordered weakly o-minimal structures may have non-dense imaginary weakly o-minimal sorts.

Here, a \emph{definable cut in $M$} is a definable, possibly empty, initial segment $C\sub M$ with no maximum. We also identify the cut $C$ with the pair $(C,M\setminus C)$. A definable cut is \emph{rational} if it is realized  (i.e., if it has a supremum in $M$), and \emph{irrational} otherwise. The \emph{definable Dedekind completion} of a weakly o-minimal structure $\CM$, denoted by $\widebar M$, is the collection of all definable cuts. It is, naturally, a linearly ordered structure embedding $(M,<)$, but it is not a definable object (it is, in fact, $\bigvee$-definable), and will not be considered as a model theoretic object.

The usefulness of $\widebar M$ lies in the fact that weak o-minimality implies that for any definable function $f: M\to M$ (or even into $\widebar M$), and for any $a\in M$, the lateral limits (in the order topology) $\lim\limits_{h\to a^\pm }\!f(x)$ exist in $\widebar M$, e.g. $\lim\limits_{x\to a^-}\!f(x)$ is the cut $$\{t\in M: \forall x<a\, \exists x',x<x'<a,\,\,\, t<f(x')\}.$$  By saying that "the limit of $f$ at $x$ exists in $\widebar M$" we will mean 
that the lateral limits at $x$ are equal and belong to $\widebar M$. 


Definable families of cuts will be referred to as \textit{sorts} of cuts. It is sometimes convenient to see them as definable functions from $M^n$ into $\widebar M $. 
If $S$ is a sort of cuts then it is linearly ordered, as a subset of $\widebar M$. Note however that the induced topology and the order topology on $S$ may differ. For example, in a real closed valued field  $K$, with $\CO\sub K$ the valuation subring, consider  the sort $S=\{\sup(x+\CO):x\in K\}\sub \widebar M$ (see further discussion of $\sup$ below).
This is a densely ordered set, but the induced topology is the left-limit topology.

  Assume that  $S\sub S'\sub \widebar M$ are  sorts of cuts. Then $\sup S$ may have different meanings if interpreted in $\widebar M$ or in $S'$. To avoid confusion, let us stress that $\sup(S)$ will always mean $\sup_{\bar{M}}(S)$, namely the cut $\{t\in M:\exists x\in S\,\, t<x\}$. Similarly, for $\inf(S)$.  

Given  $S\sub \widebar M$, we let $\conv(S)$ be \textit{the convex hull of $S$} inside $\widebar M$, namely the (non-definable) set $$\{x\in \widebar M: (\exists s_1, s_2\in S) (s_1\leq x\leq s_2)\}.$$

We recall the definition of valuational cuts in the context of weakly o-minimal ordered groups.

\begin{definition}\label{D: Stab}
Given a cut $\CC=(C_1,C_2)$ in a linearly ordered group $(M,<,+)$, the stabilizer group of $\CC$, also known as \emph{the invariance group of $\CC$} is  $G(C_1)=G(\CC)=\{\delta\in M: \delta+C_1=C_1\}$. 
\end{definition}

The invariance group $G(\CC)$ is a convex (possibly trivial) subgroup of $(M,+)$. It is straightforward to see that $G(\CC)=\{\delta\in M: \forall x\in C_1 \forall y\in C_2\,\, |\delta|<y-x\}$. 

\begin{definition}\label{D: cuts}
	   A cut $\CC=(C_1,C_2)$ is {\em valuational} if $G(\CC)\neq \{0\}$ and {\em nonvaluational}, otherwise.
        
\end{definition}

In the literature, the dichotomy between nonvaluational structures (admitting only definable nonvaluational cuts) and valuational structures (admitting both types of definable cuts) is relatively well studied. Here we study in more depth the dichotomy between the two types of cuts within a single structure. Towards that end it is convenient to introduce: 

\begin{definition}
    Let $\CM$ be a weakly o-minimal expansion of an ordered group. The \textit{definable nonvaluational completion}, denoted $\widebar M_{nv}$ is the subset of $\widebar M$ consisting of all definable nonvaluational (possibly rational) cuts in $M$. 

\end{definition}

\begin{remark}
    Some historical and terminology-related remarks are in order. It seems that nonvaluational cuts in ordered abelian groups were first studied by Baer in \cite{Baer}; these were called by Baer: \emph{Dedekindean cuts}. Cohen and Goffman constructed the nonvaluational completion \cite{CoGo} by means of ``Cauchy'' sequences indexed by ordinals and showed that it is a group (as was done in a general context by Bourbaki, \cite[Chapter II, \S3, no. 7]{Bourbaki}). Independently, Scott constructed the completion for any ordered field by a different method \cite{ScottCompletion}. 

    Note that all their different constructions are for general cuts, i.e. not necessarily definable.
\end{remark}

In general, $\widebar M_{nv}$ is not definable; it is a union of imaginary sorts and if $S$ is a sort of nonvaluational cuts then there is some sort $S'$ (consisting of nonvaluational cuts) such that for any $a,b\in S$, $a+b:=\sup\{a'+b': a'<a, b'<b\}\in S'$. We similarly define $a\cdot b$, when $M$ expands a field. 
While $\widebar M_{nv}$ is not a definable object, the restriction of the field operations to any given sorts is definable.

We note the following:

\begin{fact}\label{F: M*}
    Let $\CM$ be a weakly o-minimal expansion of a linearly ordered group. 
    \begin{enumerate}
        \item $(M,<)$ naturally embeds in $\widebar (\widebar M_{nv},<)$ as a dense subset. 
        \item The group operations on $M$ extend continuously to $\widebar M_{nv}$. 
        \item If $M$ expands a (real closed) field 
        then $\widebar M_{nv}$ is an ordered field extension of $M$ and it is a real closed field. 
    \end{enumerate}
\end{fact}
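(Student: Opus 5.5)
We identify $\widebar M_{nv}$ with the classical nonvaluational (Cauchy) completion of the ordered group $(M,<,+)$, and prove the three clauses by reducing to known facts about that completion. We use three basic inequalities: for a cut $c=(C_1,C_2)$ and $\epsilon>0$ in $M$, $c$ is nonvaluational if and only if for every $\epsilon>0$ there are $x\in C_1$ and $y\in C_2$ with $y-x<\epsilon$. This is immediate from the description $G(\CC)=\{\delta:\forall x\in C_1\,\forall y\in C_2\ |\delta|<y-x\}$ and convexity of $G(\CC)$.

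\emph{Clause (1).} Let $c<d$ in $\widebar M_{nv}$; we find $m\in M$ with $c<m<d$.
\begin{itemize}
\item If either cut is rational, this follows from density of the order on $M$.
\item Otherwise choose $x_0\in D_1\setminus C_1$ (possible since $c<d$, as cuts are initial segments) and some $x_1\in D_1$ with $x_1>x_0$, which exists because $D_1$ has no maximum. Then $c\le x_0<x_1\le d$.
\end{itemize}
So $M$ is dense in $\widebar M_{nv}$. Every element of $M$ is the rational cut it determines, and this cut is nonvaluational since $G$ of a rational cut is trivial. Hence the embedding is order preserving.

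\emph{Clause (2).} For $a,b\in\widebar M_{nv}$ define $a+b=\sup\{a'+b':a'<a,\ b'<b,\ a',b'\in M\}$ and $-a=\sup\{-y: y>a\}$. Given $\epsilon>0$, pick $x\in A_1$, $y\in A_2$ with $y-x<\epsilon/2$, and similarly $u,v$ for $b$. Then:
\begin{itemize}
\item the cut $a+b$ lies between $x+u$ and $y+v$, and these are within $\epsilon$ of each other, so $a+b$ is nonvaluational;
\item the same approximation gives $(a+b)-(x+u)<\epsilon$;
\item it also yields continuity of addition and negation, since small neighbourhoods in the order topology are determined by these approximating pairs.
\end{itemize}
The group axioms then pass from $M$ to $\widebar M_{nv}$ by density and continuity, using standard Cauchy-completion arguments (as in Cohen--Goffman and Bourbaki). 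Definability of the resulting cut from definable $a,b$ is clear.

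\emph{Clause (3).} We define multiplication on positive elements by $ab=\sup\{a'b': 0<a'<a,\ 0<b'<b\}$ and extend by signs.

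The main obstacle is showing that $ab$ is nonvaluational and that inverses exist.
\begin{itemize}
\item \emph{Nonvaluational product.} With $x\le a\le y$ and $u\le b\le v$, we have $yv-xu=y(v-u)+u(y-x)$. Bounding $y$ and $u$ by a fixed element of $M$ and taking the gaps small makes this less than $\epsilon$.
\item \emph{Inverses.} For $a>0$, $a^{-1}=\inf\{1/x: 0<x<a\}$. Since $a>0$ and $a$ is not infinitesimal-adjacent to $0$ in a valuational way (it is a cut strictly above some positive $m\in M$ by clause (1)), the map $x\mapsto 1/x$ is Lipschitz on $[m,\infty)$. Hence gaps are preserved up to a factor $m^{-2}$, and $a^{-1}$ is nonvaluational.
\end{itemize}
The field axioms and compatibility with the order follow by density and continuity as before, so $\widebar M_{nv}$ is an ordered field extending $M$.

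For real closedness, we use that a dense ordered extension $L$ of a real closed field in which $M$ is dense (the completion) is real closed. Let $p$ be a polynomial over $L$ changing sign on an interval. Approximate its coefficients by elements of $M$; the perturbed polynomial has a root in $M$ since $M$ is real closed. Continuity of roots (uniform bounds on $p$ over bounded intervals) then gives a Cauchy-type approximating family. This family determines a nonvaluational cut, which is definable when the coefficients are, so it lies in $\widebar M_{nv}$ and is a root by continuity. Alternatively, one can cite Scott's theorem that the completion of a real closed field is real closed, together with the observation that the root is definable over the coefficients.
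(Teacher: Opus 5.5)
Your treatment of clause (1), of the group operations in clause (2), and of the field operations in clause (3) is acceptable as a sketch. It is more hands-on than the paper, which simply cites Cohen--Goffman and Scott for all of this. The genuine gap is the real-closedness of $\widebar M_{nv}$, which is the only part of the Fact that the paper actually argues.

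\textbf{The general principle is false.} As stated, an ordered field $L$ containing a real closed $M$ as a dense subfield need not be real closed. Take $M=\mathbb{R}_{alg}$ and $L=\mathbb{R}_{alg}(\pi)$: $M$ is dense in $L$, but $\sqrt{\pi}\notin L$. What is true is Scott's theorem for the full Cauchy completion $\widehat M$. The real issue is passing from $\widehat M$ down to $\widebar M_{nv}$.

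\textbf{The perturbation argument does not give a Cauchy family.} Roots do not depend continuously on the coefficients when $p$ has several roots in the interval or roots of even multiplicity. Roots of nearby $q\in M[x]$ can jump between roots of $p$ or disappear. Uniform closeness of $q$ to $p$ only traps a root of $q$ in an interval $(x_1,y_1)$ with $p(x_1)<0<p(y_1)$. To localize it in arbitrarily short intervals you would need such $x_1<y_1$ with $y_1-x_1$ arbitrarily small, which is exactly the nonvaluationality of the sign-change cut you are trying to prove. So this route yields neither nonvaluationality nor definability of the limiting cut.

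\textbf{The fallback omits the step that needs proof.} Citing Scott ``together with the observation that the root is definable over the coefficients'' skips precisely the argument the paper gives. Let $a\in\widehat M$ be a root of multiplicity $k$ of $p$, where $p$ has coefficients in $\widebar M_{nv}$. Then $q=p^{(k-1)}$ still has coefficients in $\widebar M_{nv}$, and $a$ is a simple root of $q$, so $q$ changes sign at $a$. By density of $M$ in $\widehat M$, the cut $a$ is an endpoint of a convex component of the definable set $\{x\in M: q(x)>0\}$. Hence it is a definable cut, and it is nonvaluational because it lies in $\widehat M$. Thus $\widebar M_{nv}$ is relatively algebraically closed in the real closed field $\widehat M$, hence real closed.

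The reduction to a simple root is essential. At a root of even multiplicity $p$ does not change sign, so $a$ cannot be read off from the sign pattern of $p$ on $M$.
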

\begin{proof}
    Let $\widehat M$ be the Cauchy completion of $M$ as defined in \cite{CoGo}. (1) and (2) follow from \cite[Section 2]{CoGo}, once we restrict from $\widehat M$ to $\widebar M_{nv}$. Although the authors do not prove that $M$ is dense in its Cauchy completion, it is standard to verify and noted in the historical remarks of \cite{ScottCompletion}

    For (3), it is convenient to use the results in \cite{ScottCompletion}, namely Theorems 1 and 2 there. $M$ is dense in $\widehat M$ and as $M$ is real closed then so is $\widehat M$. To see that $\widebar M_{nv}$ is real closed, note that if $p(x)$ is a polynomial over $ \widebar M_{nv}$ and $a\in \widehat M$ is a root of $p$ then, by taking derivatives, we may replace $p$ by a polynomial $q(x)$ over $\widebar M_{nv}$ which vanishes at $a$, while its derivative does not.
But then, the cut $a$ is an endpoint of one of the convex components of $\{x\in M: q(x)>0\}$, thus it is a definable cut, so $a\in \widebar M_{nv}$. It follows that $\widebar M_{nv}$ is also real closed. 
\end{proof}

The monotonicity theorem is an important tool in the study of weakly o-minimal theories. First proved for densely ordered weakly o-minimal theories in \cite[Theorem 3.3] {MacMaSt} and recently generalized to general weakly o-minimal theories in \cite[Theorem 3]{MocTan}. 

\begin{fact}\label{F:monotonicity}\cite[Theorem 3]{MocTan}
    Let $M$ be weakly o-minimal but not necessarily dense and $f: M\to D$ be a definable partial function, where $D$ is any definable linear order. Then away from finitely many points $f$ is locally constant, locally strictly increasing or locally strictly decreasing.
\end{fact}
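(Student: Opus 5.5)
The proof follows the classical strategy of \cite[Theorem 3.3]{MacMaSt}, adapted to remove the density assumption and to allow an arbitrary definable linear order $D$ as target. Consider the definable sets $X_c$, $X_+$, $X_-$ of points $a\in\dom(f)$ at which $f$ is, respectively, locally constant, locally strictly increasing or locally strictly decreasing. Let $B=\dom(f)\setminus(X_c\cup X_+\cup X_-)$. The goal is to show $B$ is finite. By weak o-minimality $B$ is a finite union of convex sets, so it suffices to prove the \emph{key claim}: every infinite definable convex set $J\subseteq\dom(f)$ contains an infinite open convex subset $J'$ on which $f$ is constant, strictly increasing or strictly decreasing. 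Granting this, any infinite convex component of $B$ would contain such a $J'$. Since $J'$ is open and infinite, it contains a point of $J'$ with a neighbourhood inside $J'$, and that point is not in $B$, a contradiction.

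First I dispose of the non-density issue. If $a$ is isolated in $M$, then $\{a\}$ is a neighbourhood of $a$, so $f$ is trivially locally constant at $a$. So only non-isolated points need attention. By weak o-minimality, an infinite convex $J$ either has infinitely many isolated points, hence contains an infinite convex set of such points where the conclusion is vacuous, or contains a densely ordered infinite open convex piece. In the latter case I work there, so the rest of the argument is essentially the dense case.

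For the key claim on such a $J$, I proceed in two steps.

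\textbf{Step 1: constant or injective on a subinterval.} Consider the definable set $\{x\in J: f^{-1}(f(x))\cap J \text{ is infinite}\}$. If it has nonempty interior, pick such an $x$. The fibre $f^{-1}(f(x))\cap J$ is a finite union of convex sets, one of which is infinite, and this yields an infinite convex set on which $f$ is constant. Otherwise, after shrinking $J$, all fibres are finite. A standard counting argument then applies: take the definable function "position of $x$ in its fibre" and restrict to an infinite convex set where it is constant. This gives an infinite convex set on which $f$ is injective.

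\textbf{Step 2: injective implies monotone on a subinterval.} For each $x$, the sets $\{y>x: f(y)>f(x)\}$ and $\{y<x: f(y)>f(x)\}$ are finite unions of convex sets. Hence each $x$ has a well-defined behaviour "immediately to the right" and "immediately to the left", each being $+$ or $-$ (injectivity excludes equality). This partitions $J$ definably into four pieces, and one contains an infinite convex set $J_1$.

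The main obstacle is showing that on the pieces of type $(-,+)$ and $(+,-)$ (local max or min) one reaches a contradiction, and that on type $(+,+)$ the function is genuinely increasing on a further subinterval, not merely locally. For the latter I would use the definable function $x\mapsto\sup\{y>x: f\restriction(x,y) > f(x)\}$ with values in $\widebar M$. I would apply a weakly o-minimal argument (as in \cite{MacMaSt}) to find an infinite convex subset where this supremum is beyond the subset itself, giving strict increase there. For the local-extremum types I would derive a contradiction by a nested-interval argument: one finds $x<y$ in $J_1$ with $f(x)<f(y)$, and then the set where $f$ exceeds $f(x)$ between them yields a point whose type is wrong. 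Only convexity and injectivity are used here, so no completeness of $D$ is needed.
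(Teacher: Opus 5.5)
Your reduction to densely ordered convex pieces and your Step 1 are fine. One wording fix: on a convex set of isolated points your key claim is not vacuous, but you only need it for the convex components of $B$, and these contain no isolated points.

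The gap is Step 2. This is exactly where weak o-minimality differs from o-minimality, and you only gesture at it.

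\textbf{The monotone-type case.} In the o-minimal proof, if every $x\in J_1$ has increasing type, one shows $g(x)=\sup\{y>x: f\rest(x,y)>f(x)\}$ equals $\sup J_1$ by looking at the type of the point $s=g(x)$. In a weakly o-minimal structure that argument only shows that $g(x)$ is not an element of $J_1$. Nothing you say rules out that $g(x)$ is an irrational cut strictly inside $\conv(J_1)$ for every $x$. At such a cut there is no point whose type can be examined. Finding a subinterval $J_2$ with $g(x)\geq \sup J_2$ for all $x\in J_2$ is the entire content of the theorem at this stage. Writing ``a weakly o-minimal argument (as in \cite{MacMaSt})'' amounts to citing the (dense, less general) monotonicity theorem itself. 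Nor can you instead apply monotonicity to $g$: it is a definable map into $\widebar M$, so that would be circular.

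\textbf{The local-extremum case.} The same problem sinks it. The endpoints of $\{z\in(x,y): f(z)>f(x)\}$ may be irrational cuts. Even when they are points of $M$ they can have the correct type; for local maxima, an infimum belonging to the set is one example. So a single such set does not produce a contradiction, and a genuine global argument is missing. Note also that the issue is the lack of completeness of $M$, not of $D$.

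\textbf{The missing idea.} You need a definable cut-off point that does not come from a supremum. In the dense case the paper gets this from INP (Lemma \ref{Ramsey} and Corollary \ref{C:montonicity}). Suppose $a$ is generic over the parameters and $f(a)<f(y)$ for all $y\in(a,b)$. INP gives $b'\in(a,b)$ with $a$ still generic over $b'$. The $b'$-definable set $\{x:\forall y\in(x,b')\ f(x)<f(y)\}$ then contains the generic point $a$, hence an interval around $a$. On that interval $f$ is strictly increasing. This settles the monotone case and excludes local extrema at generic points in one stroke, for an arbitrary definable target order. For the statement as given, with $M$ not necessarily dense, the paper does not reprove it but cites \cite{MocTan}.
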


While a sort $S\sub \widebar M$ need not be weakly o-minimal (indeed, it need not even be dp-minimal), we observe the following: 
\begin{lemma}\label{L: images are WOM}
    Let $f: M\to \widebar M$ be a definable partial function, with $\mathrm{Im}(f)$ infinite. 
    \begin{enumerate}
        \item $\mathrm{Im}(f)$ is weakly o-minimal (possibly, not densely ordered), with respect to some definable linear order.
        \item There exists a finite subset $F\subseteq \mathrm{Im}(f)$ such that, locally around each point of $\mathrm{Im}(f)\setminus F$, this order agrees with either the $\widebar M$-induced order  or with its inverse. 
        \item There is an interval $I_0\sub \dom(f)$ such that $f\rest I_0$ is weakly monotone with convex fibers and $f(I_0)$  infinite.
    \end{enumerate} 
\end{lemma}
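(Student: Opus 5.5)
The plan is to equip $\mathrm{Im}(f)$ with a linear order pulled back from $M$, via the positions of the fibres of $f$, and then to get (2) and (3) from Fact~\ref{F:monotonicity}, applied to definable maps into definable linear orders. Throughout, $\mathrm{Im}(f)$ is viewed as an interpretable set, namely $\dom(f)$ modulo the definable equivalence relation $f(x)=f(y)$. The order it inherits from $\widebar M$ is definable on this sort, since $f(x)<f(y)$ means that some $t\in M$ lies in the cut $f(y)$ but not in $f(x)$.

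\emph{Step 1: the order for (1).} By weak o-minimality, each fibre $f^{-1}(c)$ is a finite union of convex sets; let $L_c$ be its leftmost convex component. Define $c\prec d$ if and only if $L_c$ lies to the left of $L_d$ in $M$. This is a definable linear order on $\mathrm{Im}(f)$: it is total because distinct fibres are disjoint, so their first components are disjoint convex sets.

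To check weak o-minimality, take a definable $X\subseteq \mathrm{Im}(f)$ and put
\[ Y=\{x\in\dom(f): x\in L_{f(x)},\ f(x)\in X\}. \]
This is a definable subset of $M$, hence a finite union of convex sets $Y_1,\dots,Y_k$. Consider the set $Z=\bigcup_c L_c$ of points lying in first components. The map $x\mapsto f(x)$ from $Z$ onto $(\mathrm{Im}(f),\prec)$ is weakly increasing and surjective, and its fibres are exactly the sets $L_c$. Therefore the image of each $Y_i\cap Z$ is $\prec$-convex, and $X=\bigcup_i f(Y_i)$ is a finite union of $\prec$-convex sets. The same holds in any elementary extension, so the theory is weakly o-minimal. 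Density may fail, and the statement allows this.

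\emph{Step 2: proof of (2).} Apply Fact~\ref{F:monotonicity}, which is stated for possibly non-dense weakly o-minimal structures, to the identity map
\[ \mathrm{id}\colon(\mathrm{Im}(f),\prec)\to(\mathrm{Im}(f),<_{\widebar M}). \]
Away from a finite set $F$, this map is locally constant, locally strictly increasing, or locally strictly decreasing. Because the identity is injective, being locally constant at a point forces that point to be $\prec$-isolated. At such a point the two orders agree trivially on a neighbourhood, namely the singleton. At every other point of $\mathrm{Im}(f)\setminus F$, the order $\prec$ locally coincides with $<_{\widebar M}$ or with its inverse.

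\emph{Step 3: proof of (3).} The subset $\{c\in\mathrm{Im}(f): L_c \text{ is infinite}\}$ together with the map $c\mapsto L_c$ is definable. Because $\mathrm{Im}(f)$ is infinite and $(\mathrm{Im}(f),\prec)$ is weakly o-minimal, there is an infinite $\prec$-convex set $J\subseteq\mathrm{Im}(f)$ that avoids $F$. Shrinking $J$, we may assume that on $J$ the order $\prec$ is either the $\widebar M$-order or its reverse. Here we use weak o-minimality of $(\mathrm{Im}(f),\prec)$ to pass from "locally agrees" to "agrees on a convex piece". This is the step where I expect the most care to be needed, since without density one must argue through the definable sets of points where the agreement holds, not through neighbourhoods.

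Now let $I_0$ be the convex hull in $M$ of $\bigcup_{c\in J}L_c$, trimmed so that it is an open interval. One must also ensure that $I_0$ contains no points of later (non-first) components of other fibres. For this, use weak o-minimality in $M$ applied to the definable set $Z$, which is a finite union of convex sets: pick $I_0$ inside a convex component of $Z$ that meets infinitely many $L_c$ with $c\in J$.

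On such an $I_0$, the fibres of $f$ are the convex sets $L_c\cap I_0$. The map $f$ is $\prec$-increasing, hence weakly monotone for the $\widebar M$-order, and $f(I_0)$ is infinite, which gives (3).
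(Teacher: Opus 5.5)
Your Steps 1 and 2 follow the paper's argument. The paper also orders $\mathrm{Im}(f)$ through the leftmost convex components of the fibres (as the quotient $I_0/E$), and applies Fact~\ref{F:monotonicity} to the identity map from this order to the $\widebar M$-order. Your remark about $\prec$-isolated points is more careful than the paper's ``it cannot be constant by injectivity''. For (3) the paper only says ``the result follows''. Your reduction is sound, but its one non-trivial step is asserted rather than proved: ``shrinking $J$, we may assume that on $J$ the order $\prec$ is the $\widebar M$-order or its reverse''. As you observe yourself, local agreement says nothing at isolated points, so (2) alone does not give this. This is a genuine gap, though the paper skips it too.

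It can be closed as follows.

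\textbf{Case 1.} Some $c\in J$ is not isolated in $J$. Then the $\prec$-interval $U\ni c$ supplied by (2) meets $J$ in an infinite definable convex set on which the two orders agree or are reversed.

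\textbf{Case 2.} Every point of $J$ is isolated. Then $J$ carries a definable $\prec$-successor $s$ and predecessor. The sets $\{c\in J: c<_{\widebar M}s(c)\}$ and $\{c\in J: s(c)<_{\widebar M}c\}$ cover $J$ minus its maximum. By weak o-minimality one of them, say the first, contains an infinite definable $\prec$-convex set $J_1$. For $c\in J_1$ let $H_c$ be the set of $d\in J_1$ such that $\prec$ and $<_{\widebar M}$ agree on the closed $\prec$-interval between $c$ and $d$.
\begin{itemize}
\item $H_c$ is definable and convex.
\item The two orders agree on all of $H_c$: compare any two of its elements through $c$.
\item $H_c$ is infinite. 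By finite induction it contains $c,s(c),s^2(c),\dots$; if $c$ has only finitely many successors in $J_1$, use the iterated predecessors instead.
\end{itemize}
So $H_c$ can replace $J$.

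Two minor points. First, trimming to an open interval while keeping $f(I_0)$ infinite needs a compactness step: by $\aleph_0$-saturation, some $[x,y]$ inside your convex set already has infinite image. Second, the sentence about $\{c: L_c \text{ is infinite}\}$ is never used.
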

\begin{proof}
    Let $I$ be the domain of $f$. Let $I_0\subseteq I$ consist of all those $x\in I$ such that $x$ belongs to the leftmost convex component of $f^{-1}(f(x))$. Then $f(I_0)=f(I)$ and the fibers of $f\rest I_0$ are convex. If $E$ is the associated (convex) equivalence relation $xEy \Leftrightarrow f(x)=f(y)$, then $f(I)=f(I_0)$ is in definable bijection with $I_0/E$, inducing a linear order on it, whose weak o-minimality follows from that of $\CM$. Let $\trianglelefteq$ be the order which $I_0/E$ induces on $\mathrm{Im}(f)$ and let $\leq$ be the order induced from $\widebar M$.

    Consider the identity map $(\mathrm{Im}(f),\trianglelefteq)\to (\mathrm{Im}(f),\leq)$. By Fact \ref{F:monotonicity} applied to the weakly o-minimal structure $(\mathrm{Im}(f),\trianglelefteq)$, this map, away from finitely many points, is locally strictly increasing or decreasing (it cannot be constant by injectivity). The result follows.
\end{proof}


    In Lemma \ref{L: images are WOM}(1), we use the notion of a weakly o-minimal definable set. A small remark is required since this definable set might not be stably embedded.
    
	\begin{remark}
		Recall that given an $\aleph_1$-saturated structure $\CM$, a definable set $S \sub M^n$  is
		{\em stably embedded} if for every definable $X \sub  M^ {nk}$  the set $X \cap S^k$  is definable using only parameters from $S$. Thus, in this case the language of the $\CM$-induced structure on $S$ does not change when we move to elementary extensions.

		Our interest below is with definable sets $D$ with an induced linear ordering with respect to which the $\CM$-induced structure  is weakly o-minimal. 
		However, weakly o-minimal definable sets need not be stably embedded. In fact, even if $\CM$ itself is weakly o-minimal and  $D$ is an interval (thus a weakly o-minimal sort), $D$ is not necessarily stably embedded.  Thus, without specifying the ambient parameter set,  the notion of  ``the structure induced on $D$'' is not well-defined. For $A \sub M$  we let $\mathcal D_A$ denote the structure obtained by taking the traces on $D^n$,  $n \in  \Nn$, of all $A$-definable sets. 
		
		When we say that $D$ is weakly o-minimal, we will  further assume that for any $A\sub M$,  the structure $\CD_A$  has a weakly o-minimal theory. Because $\CM$ is assumed to be sufficiently saturated, if $\CN\succ \CM$ then the same remains true for any $A\sub N$. 		
\end{remark}

We fix some useful notation. For $c\in \widebar M$, let $c^+$ be the type
$
c^+:=\{c<x<s: s>c\,,\, s\in M\}, 
$  and similarly define  $c^-$. If $c=\infty$ we let $c^-:=\{s<x: s
\in M\}$,  and analogously  $c=-\infty$. By weak o-minimality, these are complete types. As every complete type defines in elementary extensions a convex set, the following is easy: 
\begin{lemma}\label{L: types are conex}
    If $f: (a,b)\to K$ is definable then $f(a^+)$ is convex, i.e., the set of realizations in an elementary extension is convex.
\end{lemma}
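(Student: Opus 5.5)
The plan is to reduce to the case where $f$ is monotone on an interval $(a,b')$ and then use that complete types correspond to convex sets in elementary extensions. Work in a sufficiently saturated $\CN\succ\CK$, so that $f(a^+)$ denotes the set $\{f(x): x\models a^+\}$ in $N$. We must show: if $y_1<y<y_2$ with $y_1=f(x_1)$ and $y_2=f(x_2)$, where $x_1,x_2\models a^+$, then $y=f(x)$ for some $x\models a^+$.

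First apply the monotonicity theorem (Fact \ref{F:monotonicity}) to $f$. It gives $b'\in(a,b]$, $b'\in K$, such that on $(a,b')$ the function $f$ is either constant, or continuous and strictly increasing, or continuous and strictly decreasing. Since the finitely many exceptional points lie in $K$, one can choose $b'$ so that there are none in $(a,b')$. Local monotonicity on an interval with no break points gives global monotonicity there. For continuity I would use the standard weak o-minimal fact that a definable function is continuous off finitely many points; alternatively, I will see below that continuity is not really needed. Every realization of $a^+$ lies in $(a,b')^N$, so the constant case is trivial: $f(a^+)$ is a single point.

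In the increasing case, take $x_1,x_2$ and $y$ as above. Consider the definable set $D=\{x\in(x_1,x_2): f(x)\le y\}$, defined over $K\cup\{x_1,x_2,y\}$. It is an initial segment of $(x_1,x_2)$, hence convex. I would now avoid continuity as follows. The set of realizations of $a^+$ is the convex set $\{x: a<x<s \text{ for all } s\in K, s>a\}$. Both $x_1$ and $x_2$ lie in it, so every $x\in[x_1,x_2]$ realizes $a^+$. It therefore suffices to find $x\in[x_1,x_2]$ with $f(x)=y$. Since $f$ is defined and monotone, this reduces to the intermediate value property for $f\rest(a,b')$.

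The intermediate value property is the main obstacle, because in a weakly o-minimal field continuous monotone functions can have non-convex images: they can skip nonvaluational gaps. So "convex" must be understood as convex in the realization set in the sense intended by the paper: the image $f(a^+)$, viewed as a partial type, is $f$ applied to a complete type. By weak o-minimality, $\tp(f(x)/K)$ for $x\models a^+$ is a single complete type $q$, since $f$ is $K$-definable and $a^+$ is complete. So $f(a^+)\subseteq q(N)$. Conversely, any $y\models q$ is of the form $f(x)$: monotonicity and injectivity make $f$ a definable bijection onto its image, and $y\in f((a,b'))$ because $q$ contains the formula "$\exists x\in(a,b')\, f(x)=\cdot$". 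Then $x=f^{-1}(y)$ has type determined by $q$ via $f^{-1}$, and that type is $a^+$. Hence $f(a^+)=q(N)$, and the realization set of a complete one-variable type in a weakly o-minimal structure is convex. This is the argument I would write: it replaces the intermediate value property by the identification $f(a^+)=q(N)$, and it uses only the facts that complete types are convex and that $f$ is injective near $a^+$.
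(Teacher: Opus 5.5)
Your last paragraph has the right shape, and it is essentially what the paper means when it calls the lemma easy. The push-forward $q=\tp(f(x)/K)$, for $x\models a^+$, is a complete $1$-type, its realization set is convex, and $f(a^+)=q(N)$. The gap is in your proof of $q(N)\subseteq f(a^+)$. You pull back along $f^{-1}$, so you need $f$ to be constant or injective on some $(a,b')$, and Fact \ref{F:monotonicity} does not give this. It only says that, off finitely many points, $f$ is \emph{locally} constant or \emph{locally} strictly monotone. Your sentence ``local monotonicity on an interval with no break points gives global monotonicity there'' is not valid in weakly o-minimal structures. A definable convex equivalence relation can have infinitely many open classes accumulating at $a$ (e.g.\ the annuli of a convex valuation accumulating at $0^+$ in a real closed valued field). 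A function that is strictly increasing on each class need not respect the order of the classes. For weakly o-minimal fields the paper explicitly leaves open whether definable functions are even piecewise monotone. Similarly, ``locally constant on $(a,b')$'' only yields ``constant near $a^+$'' after invoking exchange (Fact \ref{F:exchange}). Injectivity by itself could be rescued: let $h(x)$ be the least $x'\in(a,x]$ with $f(x')=f(x)$. Then $h(h(x))=h(x)$, so $\{x: h(x)<x\}$ contains no interval $(a,b')$ and hence misses one. None of this is needed, though.

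The simple fix is to replace injectivity by saturation. Let $y\models q$ in the sufficiently saturated $\CN$. For each $s\in K$ with $s>a$, the $K$-formula $\exists x\,(a<x<s\wedge f(x)=z)$ holds of $f(x_0)$ for $x_0\models a^+$. So it lies in $q$ and holds of $y$. Hence the partial type $\{a<x<s: s\in K,\ s>a\}\cup\{f(x)=y\}$ over $K\cup\{y\}$ is finitely satisfiable. A realization $x$ in $\CN$ gives $y=f(x)$ with $x\models a^+$. Thus $f(a^+)=q(N)$ for an arbitrary definable $f$, with no monotonicity and no case distinction. Convexity then follows because each $K$-definable subset of $K$ is a finite union of $K$-definable convex sets, so any $y$ between two realizations of $q$ satisfies every formula in $q$.
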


At times, we would like to restrict the parameters. For instance, for $\CN\prec \CM$ we will denote by $(c/N)^+$ the type $\{c<x<s: s>c\,, \,s\in N\}$.

\section{Some basic properties of weakly o-minimal theories}

    In the present section we introduce some of the tools needed later on. 
    \vspace{.2cm} 
		
	\textbf{Throughout this section $\CM$ denotes an $\aleph_0$-saturated densely ordered weakly o-minimal structure.} 
    
    Let $X\subseteq M^n$ be a nonempty definable subset, $a\in M^n$. The topological dimension, denoted $\dim(X)$, is the maximal $k$ for which there is a coordinate projection $\pi:M^n\to M^k$ such that $\pi(X)$ has non-empty interior. For a tuple $a$ we set $\dim(a/C):=\min\{ \dim(X): X\in \tp(a/C)\}$. It can be shown, and we will not show it here, that this notion of dimension is equal to other notions such as dp-rank. For example, this equality is shown in \cite{SimWal} when $\CM$ expands an ordered abelian group.

\begin{definition}
    Let $X\subseteq M^n$ be an $A$-definable set, $a\in X$ and $B\supseteq A$. The point $a$ is $B$-\emph{generic}  in $X$ (or, {\em generic in $X$ over $B$}) if $\dim(a/B)=\dim(X)$.
\end{definition}

	\subsection{The independent neighborhood property and the monotonicity theorem}
    The independent neighborhood property  allows us to zoom in on generic points without harming the genericity.

    \begin{fact}\cite[Theorem A.15, Lemma 3.2]{CasHasJoh}\label{F:inp}
        Let $A\sub M$ be a parameter set and $I\sub M^n$ an open box 
		definable over arbitrary parameters. For any $a\in I$ and $b\in M^k$,  there exists $C\supseteq A$ and a $C$-definable open box $J\sub M^n$, with $a\in J\subseteq I$, such that $\dim(a,b/A)=\dim(a,b/C)$.
    \end{fact}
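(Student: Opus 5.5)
The plan is a genericity argument: choose the endpoints of $J$ so generic over everything relevant that adding them as parameters cannot lower the dimension of $(a,b)$.

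\textbf{Step 1: reductions.} Since $\dim(a,b/A)$ is witnessed by a single formula, it is determined by a finite subset of $A$. I would therefore assume $A$ is finite (or small) and work inside a sufficiently saturated elementary extension. Whatever is found there can be pulled back into $\CM$ using $\aleph_0$-saturation, because the relevant properties are expressed by finitely many formulas over finitely many parameters.

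\textbf{Step 2: the construction.} Write $I=\prod_{i=1}^n(c_i,d_i)$ and $a=(a_1,\dots,a_n)$. Choose $e=(e_1,\dots,e_n)$ and $f=(f_1,\dots,f_n)$ such that $(e,f)$ is generic over $A\,a\,b\,c\,d$ in the open box
\[
\prod_{i}(c_i,a_i)\times\prod_i(a_i,d_i).
\]
This is possible by density and saturation, and it gives $\dim(e,f/Aab)=2n$. Set $C=A\cup\{e,f\}$ and $J=\prod_i(e_i,f_i)$. Then $J$ is $C$-definable and $a\in J\subseteq I$.

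\textbf{Step 3: the dimension computation.} Let $\delta=\dim(a,b/A)$. Adding parameters can only lower dimension, so $\dim(a,b/C)\le\delta$, and it remains to prove the reverse inequality. I would use additivity of dimension over a base, computed in two orders:
\[
\dim(a,b,e,f/A)=\dim(a,b/A)+\dim(e,f/Aab)=\delta+2n,
\]
\[
\dim(a,b,e,f/A)=\dim(e,f/A)+\dim(a,b/Aef)\le 2n+\dim(a,b/C).
\]
Together these give $\dim(a,b/C)\ge\delta$, as required.

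\textbf{Main obstacle.} The difficulty is justifying additivity $\dim(xy/A)=\dim(x/A)+\dim(y/Ax)$ for topological dimension in a weakly o-minimal structure with no group structure assumed.
\begin{itemize}
\item The inequality "$\le$" (subadditivity) is the easy direction. It follows from the monotonicity theorem (Fact \ref{F:monotonicity}) via a cell-decomposition-style fiber argument.
\item The reverse inequality is the real content. This requires that a definable set whose projection has interior and whose fibers over that projection have interior must itself have large projections onto some coordinates.
\end{itemize}
To get the reverse inequality, I would first try to identify $\dim$ with dp-rank, which is additive in dp-minimal theories; this identification is established in \cite{SimWal} in the group case. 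Failing that, I would prove additivity directly by induction on $n$, using Fact \ref{F:monotonicity} and the exchange property for $\acl$ to handle fibers uniformly.
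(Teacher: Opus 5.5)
This Fact is not proved in the paper; it is quoted from \cite{CasHasJoh}. So your argument has to stand on its own, and it has a genuine gap. Step 3 rests on additivity, $\dim(xy/A)=\dim(x/A)+\dim(y/Ax)$. That formula is false in the generality of Section 3, where $\CM$ is an arbitrary densely ordered weakly o-minimal structure and exchange may fail. Worse, your choice of $J$ can actually fail.

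Take the standard example without exchange:
\begin{itemize}
\item $M=P\sqcup Q$ with $P<Q$;
\item a convex equivalence relation on $P$ whose classes are infinite and densely ordered;
\item $\alpha:P\to Q$ constant on classes, inducing an order isomorphism from the set of classes onto $Q$.
\end{itemize}
Let $a\in Q\setminus\acl(\emptyset)$, $A=\emptyset$, $n=1$, $b$ empty, and $I=(c,d)$ with $c\in P$ and $\alpha(c)<a<d$. The class $\alpha^{-1}(a)$ is an $a$-definable open convex subset of $(c,a)$. So you may pick $(e,f)\in\alpha^{-1}(a)\times(a,d)$ with $\dim(e,f/acd)=2$, which meets every requirement of your Step 2. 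But $a=\alpha(e)\in\dcl(e)$, so $\dim(a/ef)=0<1=\dim(a/\emptyset)$. The Fact itself holds here (take the endpoints in $Q$); it is your construction that fails.

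What breaks is the first display of Step 3: $\dim(a,e,f)=\dim(e,f)\le 2<3=\dim(a)+\dim(e,f/a)$. Already $\dim(\alpha(e),e)=1$, while $\dim(\alpha(e))+\dim(e/\alpha(e))=2$. So neither of your proposed routes to the ``hard direction'' can work:
\begin{itemize}
\item dp-rank is in general only subadditive, and it is not additive in this very example;
\item the identification in \cite{SimWal} assumes a group;
\item exchange is only available for expansions of fields (Fact \ref{F:exchange}), whereas Section 3 uses the Fact with no such assumption, e.g.\ for Corollary \ref{C:montonicity}.
\end{itemize}

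What is missing is a choice of $J$ adapted to $a$, not merely generic in the box, together with an argument that avoids additivity. A natural candidate is to take the endpoints realizing the one-sided infinitesimal types of the coordinates of $a$ over a small model containing $Aab$. In the example this forces $e\in Q$, since such a model contains points of $Q$ below $a$, and so keeps $e$ out of $\alpha^{-1}(a)$. One then has to show directly, from weak o-minimality and the genericity of $(a,b)$, that $(a,b)$ avoids every $Aef$-definable set of smaller dimension. Your computation is the standard one and is fine wherever $\dim$ is additive on types, e.g.\ in o-minimal structures.

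Separately, Step 1 is not valid as written. Since $C$ must contain all of $A$, replacing $A$ by a finite $A_0$ with $\dim(a,b/A_0)=\dim(a,b/A)$ gives no control over $\dim(a,b/Aef)$. Also, the condition $\dim(a,b/Aef)\ge\delta$ is a partial type in $(e,f)$ over $Aab$, not finitely many formulas. So $\aleph_0$-saturation lets you realize it in $\CM$ only when $A$ is finite; in general you need $A$ small relative to the saturation of $\CM$.
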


    Below, when referring to this property we will just write INP. \\

As a demonstration of the strength of INP, we give below, in Corollary \ref{C:montonicity}, a short proof of a version the monotonicity theorem, Fact \ref{F:monotonicity}.
\\

We need a ``Definable Ramsey" result for weakly o-minimal structures.

\begin{lemma} \label{Ramsey}
    Assume that $a\in M$ is generic over $A$, $\varphi(x,y)$,  is an $A$-formula, $|x|=|y|=1$, and assume that there is $b>a$ for which $\varphi(a,a_1)$ holds for all $a<a_1<b$. Then there is an interval $I\ni a$ such that for all $a_1<a_2$ in $I$, $\varphi(a_1,a_2)$.
\end{lemma}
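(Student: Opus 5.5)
The plan is to use genericity of $a$ twice: once to move the hypothesis from $a$ to all points of an interval around $a$, and once to get a uniform right-hand radius near $a$. Then a two-sided argument handles the points to the left of $a$.

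\textbf{Step 1 (uniform hypothesis near $a$).} Define the $A$-definable set
\[
X=\{x\in M: \exists y>x\ \forall z\,(x<z<y\rightarrow \varphi(x,z))\}.
\]
By assumption $a\in X$. Since $a$ is generic over $A$, i.e.\ $\dim(a/A)=1$, the set $X$ has nonempty interior around $a$; otherwise $a$ would lie in the finite set of non-interior points of $X$, which is impossible by weak o-minimality. So there is an interval $I_1\ni a$ with $I_1\subseteq X$.

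\textbf{Step 2 (uniform right radius).} For $x\in I_1$, let $g(x)\in\widebar M$ be the supremum of the $y>x$ such that $\varphi(x,z)$ holds for all $x<z<y$. Then $g(x)>x$, and $g$ is an $A$-definable function into $\widebar M$. By Fact~\ref{F:monotonicity}, $g$ is continuous at $a$ in $\widebar M$ away from finitely many $A$-definable points, and $a$ is not among these points by genericity. I could alternatively work directly with a definable function into $M$. Pick $b'\in M$ with $a<b'<g(a)$. Then there is a neighborhood $J\ni a$ on which $g(x)>b'$. I would shrink $J$ so that $J\subseteq(-\infty,b')$ and $J\subseteq I_1$.

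\textbf{Step 3 (conclusion).} For $a_1<a_2$ in $J$ we have $a_2<b'<g(a_1)$. By definition of $g$, this gives $\varphi(a_1,a_2)$. So $I=J$ works.

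\textbf{Main obstacle.} The delicate point is Step 2: the continuity, or more precisely the lower semicontinuity, of $g$ at the generic point $a$. The function $g$ takes values in $\widebar M$, possibly including $+\infty$, so I need Fact~\ref{F:monotonicity} for $\widebar M$-valued functions. I would handle this by applying it to $D=\mathrm{Im}(g)$ with the induced order, as in Lemma~\ref{L: images are WOM}. If $g$ is locally monotone or locally constant near $a$, then for $g$ to drop to at most $b'$ arbitrarily close to $a$ we would need a jump at $a$. The limits $\lim_{x\to a^\pm}g(x)$ exist in $\widebar M$, and a discontinuity at $a$ would make $a$ an $A$-definable point, contradicting genericity.

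A safer alternative avoids $g$ altogether. Consider the $A$-definable set
\[
Y=\{x: \exists u<x<v\ \forall x_1<x_2\in(u,v)\ \varphi(x_1,x_2)\}.
\]
It suffices to show $a\in Y$. If $a\notin Y$, then $a$ lies in the finite boundary of $Y$ or $Y$ misses a neighborhood of $a$. One would then derive a contradiction from Step 2 applied to generic points in a small interval. I expect to follow the $g$-route as the main line.
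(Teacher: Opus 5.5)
Your Steps 1 and 3 are fine, and Step 3 correctly reduces the lemma to finding some $b'>a$ in $M$ with $g(x)>b'$ for all $x$ near $a$. But that statement is just a reformulation of the lemma: if $I$ is as in the conclusion, any $b'\in I$ with $b'>a$ works. So all the content sits in Step 2, and the justification you give there fails.

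Fact \ref{F:monotonicity} yields only local constancy or local strict monotonicity, not continuity. The principle that a discontinuity at $a$ would make $a$ an $A$-definable point is true for $M$-valued functions, since jumps at every point of an interval would give the image infinitely many convex components. It is false for $\widebar M$-valued functions in the generality of this lemma, where exchange is not assumed. Let $\alpha:M\to M$ be definable, weakly increasing and locally constant with infinite image, and put $h(y)=\sup\alpha^{-1}(y)$. Then $\lim_{y'\to y^-}h(y')\le\inf\alpha^{-1}(y)<h(y)$ at every $y$ in the domain of $h$, so $h$ is discontinuous at all generic points. Your fallback via $Y$ is circular, since $a\in Y$ is exactly the conclusion.

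The missing idea is to choose $b'$ so that $a$ remains generic over it. This is exactly what the paper does with INP (Fact \ref{F:inp}). Take $b'\in(a,b)$ with $\dim(a/Ab')=1$. The $Ab'$-definable set $\{x:\forall z\,(x<z<b'\rightarrow\varphi(x,z))\}$ contains $a$, hence an open interval around $a$, and intersecting that interval with $(-\infty,b')$ gives $I$.

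Alternatively, your $g$-route can be repaired with a targeted argument instead of continuity. If $g$ is constant or strictly decreasing on an interval around $a$, a suitable $b'$ exists outright. So the only bad case is $g$ strictly increasing on some interval $U\ni a$ with $\lim_{x\to a^-}g(x)=a$. The set $B$ of those $x$ with $\lim_{y\to x^-}g(y)\le x$ is $A$-definable. If $a\in B$, then by genericity some open interval $J\ni a$ lies in $B$. But any $a'\in J\cap U$ with $a<a'<g(a)$ has $\lim_{y\to a'^-}g(y)\ge g(a)>a'$, a contradiction.

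Neither argument appears in your proposal, and without one of them Step 2 is unproved.
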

\begin{proof}
    By INP, there is $a<b'<b$, with $\dim(a/b'A)=1$ such that $(\forall x)(x\in (a,b') \to \varphi(a,x))$. Since $\dim(a/Ab')=1$  there is an interval $I\ni a$ such that for all $a_1\in I$ the following holds: $(\forall x)(x\in (a_1,b')\to \varphi(a_1,x))$. By shrinking $I$, we may assume that $\sup(I)\leq b'$ and therefore for all $a_1<a_2$ in $I$, $\varphi(a_1,a_2)$.
\end{proof}

\begin{remark}
    The above result does not extend to ternary relations, even in o-minimal structures. As an example consider $R(x,y,z)$ the relation given by $y<z<x+|y-x|$. There is no interval $I$ such that either all $x<y<z$ in $I$ satisfy $R$ or all $x<y<z<$ fail to satisfy $R$. 
\end{remark}

As was noted in \cite[Example A.5]{Joh:exchange}, Lemma \ref{Ramsey} immediately implies:
\begin{corollary}\label{C:montonicity}
    Let $f: M\to (Y,\vartriangleleft)$ be a definable partial function into a definable linearly ordered set. Then, away from finitely many points, $f$ is locally constant, or locally strictly  monotone. 
\end{corollary}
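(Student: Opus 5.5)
Fix $A$ over which $f$, $Y$ and $\vartriangleleft$ are defined, and work with a point $a\in\dom(f)$ that is generic over $A$, i.e. $\dim(a/A)=1$. The plan has three steps: show that every such $a$ has a neighbourhood on which $f$ is constant, strictly increasing or strictly decreasing; pass from generic points to all but finitely many points by weak o-minimality; then check that the leftover "bad" set is finite.

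\textbf{Step 1: behaviour at a generic point.} First I would look to the right of $a$. For $x>a$ close to $a$, exactly one of three $A$-formulas holds: $\varphi_=(x,y): f(x)=f(y)$, $\varphi_<(x,y): f(x)\vartriangleleft f(y)$, or $\varphi_>(x,y): f(y)\vartriangleleft f(x)$. The three sets $\{y>a:\varphi_\ast(a,y)\}$ are definable over $Aa$ and partition $\{y\in\dom(f): y>a\}$. By weak o-minimality each is a finite union of convex sets, so exactly one of them contains an interval $(a,b)$.

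Say it is $\varphi_\ast$. Lemma \ref{Ramsey} then gives an interval $I\ni a$ such that $\varphi_\ast(a_1,a_2)$ holds for all $a_1<a_2$ in $I$. Depending on $\ast$, this says $f\rest I$ is constant, strictly increasing or strictly decreasing. So every $A$-generic point has a neighbourhood on which $f$ is constant or strictly monotone.

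Two side remarks. The domain issue needs a small check: $\dom(f)$ is $A$-definable and $a$ is generic in it, so $a$ lies in its interior and the interval $I$ can be taken inside $\dom(f)$. The decomposition into three sets needs nothing beyond weak o-minimality of $M$. Note that Lemma \ref{Ramsey} applies to all of $I$ at once, so no separate leftward analysis is needed.

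\textbf{Step 2: passing to all but finitely many points.} Let $B$ be the set of $x\in\dom(f)$ such that $f$ is not locally constant, locally strictly increasing or locally strictly decreasing at $x$. This set is $A$-definable, since each of the three local properties is first-order expressible with parameters from $A$.

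By Step 1, $B$ contains no point generic over $A$, so $\dim(B)=0$, i.e. $B$ has empty interior. A definable subset of $M$ is a finite union of convex sets. Since $M$ is densely ordered, a convex set with empty interior is a single point, so $B$ is finite. This is where density is used.

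\textbf{Step 3: the main obstacle.} The delicate point is the justification in Step 2 that an $A$-definable set $X\subseteq M$ with nonempty interior contains an $A$-generic point. I would argue by $\aleph_0$-saturation together with the fact that, for an $A$-definable set, having nonempty interior is equivalent to being infinite. If every element of $X$ satisfied some $A$-formula with finite (hence, by the convex-set structure, nowhere dense) realization set, compactness would give a contradiction: finitely many finite sets cannot cover an interval.

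Strictly, one should take $A$ small, or work in a sufficiently saturated elementary extension. Since the property "away from finitely many points $f$ is locally constant or strictly monotone" is first-order, with the finitely many points and the local behaviour definable, it transfers back down to $\CM$.
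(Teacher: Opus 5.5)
Your proposal is correct and follows the route the paper intends. The paper says only that Lemma~\ref{Ramsey} immediately implies the corollary, and your Step~1 is that implication: at an $A$-generic $a$, you find which of $f(a)=f(y)$, $f(a)\vartriangleleft f(y)$, $f(y)\vartriangleleft f(a)$ holds on some $(a,b)$, then apply Lemma~\ref{Ramsey} to that formula. Your Steps~2--3 correctly supply the routine details the paper leaves implicit: the $A$-definable bad set contains no $A$-generic point (such points exist in sets with nonempty interior by $\aleph_0$-saturation over finite $A$), so it has empty interior, and by density and weak o-minimality it is finite.
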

    


Finally, we  need the following variant     of INP, which is an extension of \cite[Lemma A.7]{Joh:exchange}.

\begin{lemma}\label{L: Johnson A7.2}
Assume that $f:M\to S$ is a partial definable function, where $S$ is any definable set, and $\varphi(x,e)$ a formula, such that for all $0<\epsilon_1<\epsilon_2$, and $x\in M$, if $\varphi(x,\epsilon_2)$ then $\varphi(x,\epsilon_1)$.

Assume that $f$ and $\varphi$ are all defined over $A$ and $a\in \dom(f)$ is such that  $f(a)\notin \acl(A)$.

If  $\varphi(a,\epsilon)$ holds for some $\epsilon>0$ then there exists  $\epsilon_0>0$ and a convex $C\ni a$, defined over $A\epsilon_0$,  such that $f(a)\notin \acl(A\epsilon_0)$ (so $f(C)$ is infinite) and for all $x\in C$,   $\varphi(x,\epsilon_0)$ holds. 
\end{lemma}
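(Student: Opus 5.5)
Fix $\epsilon>0$ with $\varphi(a,\epsilon)$. We may assume $\epsilon$ is small; by the monotonicity hypothesis on $\varphi$, any smaller $\epsilon_0$ still satisfies $\varphi(a,\epsilon_0)$. The plan is to choose $\epsilon_0\in(0,\epsilon)$ that is ``independent'' from $a$, and then use the $A\epsilon_0$-definable set $X_{\epsilon_0}=\{x\in\dom(f):\varphi(x,\epsilon_0)\}$. Note that $a\in X_{\epsilon_0}$. We will take $C$ to be the convex component of $X_{\epsilon_0}$ containing $a$. By weak o-minimality $X_{\epsilon_0}$ has finitely many convex components, so $C$ is $A\epsilon_0$-definable, being the $k$-th component for some $k$. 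Everything therefore reduces to finding $\epsilon_0$ with $0<\epsilon_0\le\epsilon$ and $f(a)\notin\acl(A\epsilon_0)$. Once this holds, $f(C)$ is infinite: otherwise $f(C)$ would be a finite $A\epsilon_0$-definable set containing $f(a)$.

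To find $\epsilon_0$ we use exchange and a dimension count. Let $b=f(a)$. Since $b\in\dcl(Aa)$ and $b\notin\acl(A)$, we get $a\notin\acl(A)$, so $\dim(a/A)=1$. The issue is that $b$ may lie in an imaginary sort $S$, so I would avoid exchange on $b$ directly and work with the fiber instead. The key point is that $f(a)\in\acl(A\epsilon_0)$ holds iff the fiber $f^{-1}(f(a))$ is ``small over $A\epsilon_0$.'' More precisely, $f(a)\notin\acl(A\epsilon_0)$ holds whenever $\tp(a/A\epsilon_0)$ is realized by elements with infinitely many distinct $f$-values.

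Concretely, choose $\epsilon_0\in(0,\epsilon)$ generic over $Aa$, i.e.\ $\dim(\epsilon_0/Aa)=1$; this is possible since $(0,\epsilon)$ is infinite and we are in a saturated model. Additivity of dimension (exchange, available in dense weakly o-minimal structures for $\dim$, or via dp-rank) gives $\dim(a,\epsilon_0/A)=2$, hence $\dim(a/A\epsilon_0)=1$. Suppose toward a contradiction that $f(a)\in\acl(A\epsilon_0)$. Then there is a finite $A\epsilon_0$-definable set $F\ni f(a)$. The set $f^{-1}(F)$ is $A\epsilon_0$-definable and contains $a$, and since $\dim(a/A\epsilon_0)=1$ it must contain an interval around $a$. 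Thus $f$ is locally constant at $a$ up to the finitely many values in $F$. Shrinking, by monotonicity (Corollary \ref{C:montonicity}), we may assume $a$ lies in the interior of a convex fiber of $f$ restricted to that interval.

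The main obstacle is converting this into $f(a)\in\acl(A)$, which contradicts the hypothesis. I would argue as follows. The statement ``$f(x)$ belongs to the finite set $F_{e}$ and $x$ is interior to its convex fiber'' is a formula $\psi(x,e)$ over $A$, with $F_{e}$ uniformly defined and of bounded size $n$. It holds of $(a,\epsilon_0)$. Since $\epsilon_0$ is generic over $Aa$, the formula $\psi(a,e)$ holds for all $e$ in an interval $J\ni\epsilon_0$. Now use genericity of $a$ over $A$ together with Lemma \ref{Ramsey}, or rather a symmetric swap: the set $\{(x,e):\psi(x,e)\}$ has dimension 2 at $(a,\epsilon_0)$, so $(a,\epsilon_0)$ lies in the interior of an $A$-definable open box region $U$ where $\psi$ holds. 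Then, for $e$ in the projection of $U$, the fibers $\{x:(x,e)\in U\}$ overlap. Hence $f(a)\in F_e$ for infinitely many $e$ in an interval, and this interval can be chosen $A$-definable and containing $\epsilon_0$ by INP (Fact \ref{F:inp}). In a fixed box the union $\bigcup_e F_e\cap f(\text{the }x\text{-interval})$ is finite, because $f$ restricted to the $x$-box takes values in $F_e$ for each fixed $e$ and the $x$-box is independent of $e$. The box is defined over parameters $C$ preserving $\dim(a/C)$, so $f(a)\in\acl(C)$. Arranging via INP that $C$ is independent from $a$ over $A$, i.e.\ $\dim(a/C)=\dim(a/A)$, then forces $f(a)\in\acl(A)$ by the same genericity argument: $f$ is locally constant on an $A$-definable set of generic points, and its value at generic $a$ is $A$-algebraic. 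This gives the contradiction. The delicate bookkeeping lies in this last step, namely ensuring that INP yields parameters over which $a$ remains generic while the box witnesses local constancy.
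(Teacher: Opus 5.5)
Your reduction is fine: once you have some $\epsilon_0\in(0,\epsilon]$ with $f(a)\notin\acl(A\epsilon_0)$, the convex component of $\{x\in\dom(f):\varphi(x,\epsilon_0)\}$ containing $a$ is $A\epsilon_0$-definable and does the job. The gap is in how you choose $\epsilon_0$.

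\textbf{Where the argument fails.} Taking $\epsilon_0$ merely generic over $Aa$ does not give $f(a)\notin\acl(A\epsilon_0)$. Your argument for it breaks at the last step, where you assert that a function which is locally constant around generic points has $A$-algebraic values there. That is false for maps into imaginary sorts. In fact the paper applies this lemma (in Lemma \ref{L: 2.1}) precisely to a locally constant $f$ with infinite image.

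\textbf{Counterexample to your choice of $\epsilon_0$, even where exchange holds.} Let $K$ be a saturated real closed field with a nontrivial convex valuation ring $\CO$. Take $A=\emptyset$, $f(x)=x+\CO\in K/\CO$, and $\varphi(x,e)$ the formula $x=x$. Let $a>0$ with $a\notin\CO$; then the cosets $na+\CO$, $n\ge 1$, are distinct conjugates, so $f(a)\notin\acl(\emptyset)$. Put $\epsilon=2a$. Choose $u\in(0,1)$ with $u\notin\dcl(a)$ and set $\epsilon_0=a+u$. Then $\epsilon_0\in(0,\epsilon)$, $\dim(\epsilon_0/a)=1$ and even $\dim(a/\epsilon_0)=1$. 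Yet $f(a)=\epsilon_0+\CO\in\dcl(\epsilon_0)$. Your box argument correctly yields $f(a)\in\acl(C)$ for INP-parameters $C$ lying inside $a+\CO$, but nothing upgrades this to $f(a)\in\acl(\emptyset)$.

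Separately, the lemma is stated for an arbitrary dense weakly o-minimal $\CM$, where exchange is not assumed. There additivity of $\dim$ fails, for instance for the graph of a locally constant function with infinite image. So even your step $\dim(a/A\epsilon_0)=1$ is unjustified in that generality.

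\textbf{The missing idea.} $\epsilon_0$ must be small relative to \emph{all} $A$-conjugates of $f(a)$ simultaneously, not just independent from $a$.

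The paper does this as follows:
\begin{enumerate}
\item Take $\kappa>|T|+|A|$ many realizations $a_i$ of $\tp(a/A)$ with pairwise distinct $f(a_i)$.
\item By saturation, pick one $\epsilon_0>0$ below witnesses $\epsilon_i$ for every $\varphi(a_i,\cdot)$. This is where the monotonicity of $\varphi$ is used.
\item Choose a convex component of $\varphi(M,\epsilon_0)$ containing $\kappa$ many $a_i$. By counting, one such $a_i$ has $f(a_i)\notin\acl(A\epsilon_0)$.
\item Move that $a_i$ to $a$ by an automorphism over $A$.
\end{enumerate}

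Alternatively, you can repair your own framework by choosing $\epsilon_0<\epsilon$ realizing the complete type $0^+$ over $Af(a)$. Suppose $\theta(f(a),\epsilon_0)\wedge\exists^{\le n}y\,\theta(y,\epsilon_0)$ holds with $\theta$ over $A$. Then this $Af(a)$-formula in $\epsilon_0$ belongs to $0^+$, so it holds on some interval $(0,t)$. The same is then true for each of $n+1$ distinct realizations of $\tp(f(a)/A)$. A common small $x>0$ would then give $n+1$ solutions to $\theta(y,x)$, a contradiction.
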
 
\begin{proof}
Let $\kappa> |T|+|A|$ be a large enough cardinal (but smaller than the saturation of $M$). Since $f(a) \notin \acl(A)$, we can find $(a_i)_{i<\kappa} \ldots$ realizing $\tp(a/A)$ with the $f(a_i)$ pairwise distinct. For each $i\geq 1$, there is $\epsilon_i > 0$ such that $\phi(a_i,\epsilon_i)$ holds. By saturation, there exists  $\epsilon_0 > 0$ with $\epsilon_0 \le \epsilon_i$ for every $i<\kappa$. Then $\phi(a_i,\epsilon_0)$ holds for every $i$. Equivalently, $X:=\{x\in M: \CM\models \phi(x,\epsilon_0)\}$ contains all the $a_i$. 

Take $C_0$ a convex component of $X$ containing $\kappa$-many of the $a_i$. 
Choose some $a_i$ in $C_0$ with $f(a_i)\notin \acl(A\epsilon_0)$. We have a convex $A\epsilon_0$-definable set $C_0$ containing $a_i$ such that $f(a_i)\notin \acl(A\epsilon_0)$ and $\phi(x,\epsilon_0)$ holds for every $x \in C_0$. Take $\sigma$ an automorphism over $A$ moving $a_i$ to $a$ and let $\epsilon = \sigma(\epsilon_0)$ and $C = \sigma(C_0)$. Then $C$ is a convex $A\epsilon$-definable set containing $a$ such that $f(a)\notin \acl(A\epsilon_0)$ and $\phi(x,\epsilon)$ holds for every $x \in C$.
\end{proof}

\subsection{Density of nonvaluational cuts}

For this section let $\CM$ be a sufficiently saturated weakly o-minimal expansion of an ordered abelian group.

We need some general results on nonvaluational cuts.

\begin{lemma}\label{L:holes between nv}
    Let $S$ be an infinite sort of nonvaluational cuts. Then there is no strictly monotone sequence $(c_n)_{n<\omega}$ of elements in $S$ such that for all $n$, $\conv(c_n,c_{n+1})\nsubseteq \cl_{\widebar M}(S)$.
   
\end{lemma}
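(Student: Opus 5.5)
The plan is to turn the sequence into infinitely many convex components of one definable subset of $M$, which weak o-minimality forbids. By symmetry it suffices to treat a strictly increasing $(c_n)_{n<\omega}$; the decreasing case follows by reversing the order. The definable set is the trace on $M$ of the complement of the closure,
$$G:=\{x\in M: \exists a<x<b \text{ in } M \text{ such that no } s\in S \text{ lies in } (a,b)\}.$$
Since $S$ is a definable family of cuts and the order between elements of $M$ and elements of $S$ is definable, $G$ is definable. By weak o-minimality, $G$ is a finite union of convex sets.

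First, each gap contributes a point of $G$. Since $M$ is dense in $\widebar M$ (we work inside $\widebar M_{nv}$, and Fact \ref{F: M*}(1) applies), the hypothesis $\conv(c_n,c_{n+1})\nsubseteq \cl_{\widebar M}(S)$ gives $a_n<b_n$ in $M$ with $c_n<a_n<b_n<c_{n+1}$ and $S\cap(a_n,b_n)=\emptyset$. Any $x_n\in(a_n,b_n)\cap M$ then lies in $G$, and we get $x_0<c_1<x_1<c_2<x_2<\cdots$. It now suffices to show that no convex component of $G$ contains two points $x<y$ of $M$ with some $c\in S$ satisfying $x<c<y$. That gives infinitely many components, the desired contradiction.

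The main step is this separation claim, and it is where nonvaluationality is used. Suppose $x<c<y$ with $[x,y]\cap M\subseteq G$ and $c\in S$. Consider the definable set $P=\{z\in[x,y]\cap M: \exists s\in S\ (s\le z)\}$. It is a final segment of $[x,y]\cap M$ containing $y$, because $c\le y$. Let $d:=\inf P\in\widebar M$, a definable cut. If $d$ is realized by some $z_0\in M$, then $z_0\in G$ has a neighbourhood $(a,b)$ missing $S$. Points of $P$ slightly above $z_0$ then require elements of $S$ at or below them that lie above all of $M$ below $z_0$, hence inside $(a,b)$ (up to the usual endpoint care). This is a contradiction.

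If $d$ is irrational, I would take $s\in S$ with $s\le z$ for some $z\in P$ close to $d$. Then $s\ge d$, since otherwise points of $M$ below $d$ would already lie in $P$. Nonvaluationality of $s$ (the $M$-points of its left and right halves come arbitrarily close) together with density of $M$ should force $s=d$, or produce an element of $S$ arbitrarily close to $d$. Hence every $M$-neighbourhood of $d$ meets $S$. But $d$ lies strictly between points of $[x,y]\cap M\subseteq G$, each of which has an $M$-neighbourhood missing $S$. Since $d$ is nonvaluational it is pinched by points of $M$ within any $\varepsilon$. Choosing $\varepsilon$ smaller than the radius of such a neighbourhood around a nearby point of $G$ should yield the contradiction.

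This last compactness-style step is the one I expect to be delicate: the radii of the $S$-free neighbourhoods vary with the point. To handle it I would try applying monotonicity (Corollary \ref{C:montonicity}) to the definable radius function $z\mapsto\sup\{\varepsilon: (z-\varepsilon,z+\varepsilon)\cap S=\emptyset\}$ on $(x,y)$, so as to get a uniform lower bound near $d$.
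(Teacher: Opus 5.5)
There is a genuine gap. Your first step is fine: you extract $a_n<b_n$ in $M$ with $S\cap(a_n,b_n)=\emptyset$ between $c_n$ and $c_{n+1}$. But the reduction to the separation claim fails, because that claim is false.

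The set $G$ is the trace on $M$ of the complement of $\cl_{\widebar M}(S)$, and it cannot see irrational points of $\cl_{\widebar M}(S)$. Suppose $c\in S$ is an irrational cut that is isolated in $S$. Then every point of $M$ close to $c$ lies in $G$, since $c\notin M$, so a single convex component of $G$contains points $x<c<y$. For a concrete case, take a saturated elementary extension of the real algebraic numbers expanded by a predicate for $\{t:t<\pi\}$. Let $c$ be the (irrational, nonvaluational) cut at $\pi$, and let $S=\{c\}\cup\{t\in M:0<t<1\}$.

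Your own argument shows exactly where this bites. In the irrational case you correctly get $d\in\cl_{\widebar M}(S)$. But then the radius function satisfies $r(z)\le|z-d|$ for $z\in M$, and since $d$ is nonvaluational this bound is arbitrarily small. So the radii necessarily tend to $0$ at $d$, and no monotonicity argument can give a uniform lower bound. In the configuration of the lemma, the $c_n$ could a priori be exactly such isolated irrational points. In that case $G$ might be a single convex set, so $G$ alone yields no contradiction.

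The missing idea is to detect the $c_n$ with an $\epsilon$-thickening of $S$, for a single $\epsilon$ chosen by saturation. By saturation, pick $\epsilon\in M$ with $0<\epsilon<b_n-a_n$ for all $n$. Let $D=\{x\in M:\exists s\in S\,(x<s<x+\epsilon)\}$, which is definable.

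Then $(a_n,b_n-\epsilon)\cap D=\emptyset$. On the other hand, nonvaluationality of $c_n$ (triviality of its invariance group) gives $x\in M$ with $x<c_n<x+\epsilon$, so $D$ meets $(c_n-\epsilon,c_n)$. Hence $D$ alternates infinitely often along the sequence and has infinitely many convex components, contradicting weak o-minimality.

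This is the paper's proof, and it is also the point where nonvaluationality genuinely enters. If $c_n$ were invariant under $\epsilon$, no such $x$ would exist.
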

\begin{proof} 
    Assume such a sequence exists and that the sequence is, say, decreasing. Thus, there are non-empty intervals $I_n=(u_n,v_n)\subseteq M$ lying between $c_n$ and $c_{n+1}$ such that $\conv(I_n)\cap S=\emptyset$ for all $n$.  By saturation, there is some $\epsilon>0$, such that $0<\epsilon<v_n-u_n$ for all $n$.
    
    Let $D$ be the definable set $\{x\in M: \exists s\in S,\, x<s<x+\epsilon\}$. Since each $c_n$ is nonvaluational, $D$ intersects nontrivially some neighborhood of $c_n$. On the other hand, for each $n$, $(u_n,v_n-\epsilon)\cap D=\emptyset$. This implies that $D$ has infinitely many convex components, contradicting weak o-minimality.
\end{proof}

\begin{corollary}\label{C:partition of nv}
    Assume that $S\sub \widebar M_{nv}$ is a definable sort of nonvaluational cuts. Then $S$ can be partitioned into finitely many definable sets $S_i$, such that for each $i$, $\conv(S_i)\cap M\sub \cl(S_i)$ (where the closure is taken in $\widebar M$). 
\end{corollary}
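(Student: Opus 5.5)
Using Lemma \ref{L:holes between nv}, I will show that the set of ``bad'' points of $S$ can be cut out definably and is finite; the $S_i$ are then the convex pieces of $S$ between consecutive bad points, together with the bad points themselves as singletons.

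First reduce to $S$ infinite; otherwise take singletons, each of which satisfies the condition trivially. Definably identify $S$ with a subset of $\widebar M$ via a definable family of cuts. The condition $\conv(S_i)\cap M\sub \cl(S_i)$ says that $S_i$ has no ``holes'': there is no interval $(u,v)\subseteq M$ lying strictly between two elements of $S_i$ with $\conv((u,v))\cap S_i=\emptyset$. This is a first-order property of a definable $S_i$.

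Define the \emph{gap set}
\[ H=\{x\in M: \exists s_1,s_2\in S\ (s_1<x<s_2)\ \text{and}\ \exists u<x<v\ \text{with } \conv((u,v))\cap S=\emptyset\}. \]
This is a definable subset of $M$, so by weak o-minimality it is a finite union of convex sets $H_1,\dots,H_m$. Let $D_1,\dots,D_k$ be the convex components of $M\setminus H$, also finitely many. Define $S_j=\{s\in S: s\in\conv(D_j)\}$; concretely, $s\in S_j$ iff $s$ is not separated from $D_j$ by any point of $H$. Points of $S$ sitting at the boundary between some $H_l$ and some $D_j$, and those lying in no $\conv(D_j)$, will be put into separate pieces.

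Each such piece should have no holes. If $s_1<s_2$ lie in $S_j$, then no $x\in M$ between them belongs to $H$. Hence every such $x$ has every neighborhood meeting $S$, so $x\in\cl(S)$. I also need $x\in\cl(S_j)$, which holds because nearby points of $S$ stay between $s_1$ and $s_2$ and therefore also lie in $S_j$. This step is routine.

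The main obstacle is ensuring that $S$ is covered by finitely many pieces. The elements of $S$ not in any $\conv(D_j)$ lie inside or at the boundary of some convex $H_l$. I must show that each $H_l$ contains only finitely many elements of $S$. If some $\conv(H_l)$ contained infinitely many elements of $S$, then by weak o-minimality of the image (Lemma \ref{L: images are WOM}) I could extract a strictly monotone sequence $(c_n)$ in $S\cap\conv(H_l)$. Since $\conv(H_l)\cap S$ is nowhere dense in $H_l$, I would choose the $c_n$ so that between consecutive terms there is a genuine gap interval. This yields $\conv(c_n,c_{n+1})\nsubseteq\cl(S)$ for all $n$, contradicting Lemma \ref{L:holes between nv}.

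The delicate part is producing that gap between consecutive terms. The plan is to pick $c_{n+1}$ beyond a whole gap interval following $c_n$, which is possible since $H_l$ is convex and consists of points with gap neighborhoods. I must also check that a definable set which accumulates in $\widebar M$ yields such a sequence by saturation.

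Finally, the finitely many leftover points of $S$ become singleton pieces, and this gives the desired finite partition.
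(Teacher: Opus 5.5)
Your argument is correct, but it is organized differently from the paper's.

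\textbf{The paper's route.} The paper works entirely inside $S$. It defines the convex equivalence relation $a\sim b$ iff $S$ is dense in the $\widebar M$-interval between $a$ and $b$. Lemma \ref{L:holes between nv} then directly bounds the number of classes: representatives of infinitely many classes would give a strictly monotone sequence with a hole between consecutive terms. The classes are the $S_i$.

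\textbf{Your route.} You push the problem down to the home sort. The gap set $H$ is a definable subset of $M$, so weak o-minimality of $M$ gives finitely many convex components of $H$ and of $M\setminus H$ for free. Lemma \ref{L:holes between nv} is then needed only to show that each $\conv(H_l)$ traps finitely many points of $S$, which are necessarily isolated points of $S$. Your pieces are essentially the paper's classes, except that boundary cuts between components become singletons, which is harmless.

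\textbf{Comparison.} The paper's version is shorter and yields the canonical coarsest partition into maximal hole-free convex pieces. Yours makes the role of weak o-minimality of $M$ explicit, at the cost of extra bookkeeping for the boundary cuts and the leftover points.

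\textbf{The step you flag as delicate is automatic.} Let $c<c'$ be distinct elements of $S\cap\conv(H_l)$. There is $x\in M$ with $c<x<c'$, since cuts have no maximum and $M$ is dense. By convexity of $H_l$ in $M$, $x\in H_l$, so $x\notin\cl(S)$. Hence \emph{any} strictly monotone sequence in $S\cap\conv(H_l)$ already contradicts Lemma \ref{L:holes between nv}. Such a sequence exists in any infinite linear order by Ramsey's theorem, so you need neither Lemma \ref{L: images are WOM} nor a saturation argument to extract it.
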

\begin{proof}
    Define an equivalence relation $\sim$ on $S$: $a\sim b$ if $S$ is  dense in the $\widebar M$-interval between $a$ and $b$. Note that this is a definable equivalence relation with convex classes. By Lemma \ref{L:holes between nv}, there are only finitely many equivalence classes. Let $S_1,\dots, S_n$ be the classes. It is straightforward to see they satisfy the requirement.
%
\end{proof}

\begin{corollary}\label{C:interval inside closure}
    Let $S$ be an infinite sort of positive nonvaluational cuts with $0\in \cl(S)$.  Then there is $d>0$ such that $[0,d]\subseteq \cl(S)$.
\end{corollary}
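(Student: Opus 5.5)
We will use Corollary \ref{C:partition of nv} to reduce to a single piece of $S$ that is dense in its convex hull, and then show that this piece reaches down to $0$. Apply Corollary \ref{C:partition of nv} to $S$ and obtain a partition $S=S_1\cup\dots\cup S_n$ into definable sets with $\conv(S_i)\cap M\sub \cl(S_i)$. Closure commutes with finite unions, so $\cl(S)=\bigcup_i \cl(S_i)$, and hence $0\in \cl(S_i)$ for some $i$. Fix such an $i$ and write $S'=S_i$.

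The next step is to check that $\inf S'=0$ in $\widebar M$. Every element of $S'$ is positive, so $\inf S'\geq 0$. If the infimum were strictly positive, there would be some $t\in M$ with $0<t<\inf S'$. Then the neighborhood $(-t,t)$ of $0$ would miss $S'$, contradicting $0\in\cl(S')$.

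Now pick any $s\in S'$ and any $d\in M$ with $0<d<s$; such $d$ exists because $s>0$ and $M$ is dense in $\widebar M_{nv}$ by Fact \ref{F: M*}. We claim that every $x\in M$ with $0<x\leq d$ lies in $\conv(S')$. Since $\inf S'=0<x$, there is some $s_1\in S'$ with $s_1<x$. We also have $x\le d<s$. Therefore $s_1< x< s$ with $s_1,s\in S'$, so $x\in\conv(S')\cap M\sub\cl(S')$. The point $0$ itself lies in $\cl(S')$ by the choice of $i$. Hence $[0,d]\cap M\sub \cl(S')\sub\cl(S)$.

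It remains to pass from $[0,d]\cap M$ to all of $[0,d]$, if the interval is read inside $\widebar M$. The closure $\cl(S)$ is closed in $\widebar M$, and by Fact \ref{F: M*}(1) every point of $[0,d]$ is a limit of points of $M\cap[0,d]$. So it suffices to show that $M$ is dense in $[0,d]$ in the topology of $\widebar M$. This is automatic for elements of $\widebar M_{nv}$. For valuational cuts $c$ inside $(0,d)$, any basic neighborhood of $c$ in $\widebar M$ contains an open interval with endpoints in $M$, and hence contains points of $M$. So $[0,d]\sub\cl(S)$.

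We expect this last topological point to be the main subtlety: one has to make sure the topology on $\widebar M$ is the order topology, so that closure is witnessed by elements of $M$. Otherwise the argument is a direct combination of Corollary \ref{C:partition of nv} with the observation $\inf S'=0$.
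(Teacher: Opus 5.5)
Your proof is correct and follows the paper's argument. You reduce via Corollary \ref{C:partition of nv} to a piece that is dense in its convex hull, note that its elements accumulate at $0$ from above, and conclude that $\conv(S)\cap M$ contains an interval $(0,d)$, so $[0,d]\subseteq\cl(S)$. Your extra steps are careful expansions of what the paper compresses into ``we may assume'': choosing the piece $S_i$ with $0\in\cl(S_i)$, and passing from $[0,d]\cap M$ to $[0,d]$ in $\widebar M$ using density of $M$, which holds because cuts have no maximum.
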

\begin{proof}
By Corollary \ref{C:partition of nv}, we may assume that $\conv(S) \cap M$ is contained in $\cl(S)$.  Since $0 \in \cl(S)$ and $S$ consists of positive elements, there are elements of $S$ between $0$ and $\epsilon$, for any positive $\epsilon \in M$.  Thus $\conv(S) \cap M$ contains some open interval $(0,d)$, implying that $[0,d]$ is contained in $\cl(S)$.
\end{proof}

\begin{corollary}\label{C:nv are dense}
     Let $S$ be an infinite sort of nonvaluational cuts. Then, for all but finitely many $a\in S$,
     $a=\inf_{\widebar M}\{s\in S:s>a\}=\sup_{\widebar M}\{s\in S:s<a\}.$

     In particular, there is a finite set $F\sub S$, such that 
     on $S\setminus F$ the order is dense, and the order topology coincides with the $\widebar M$-induced topology on $S\setminus F$. 
\end{corollary}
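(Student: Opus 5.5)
We reduce the statement to Corollary \ref{C:partition of nv} together with a finiteness argument for the "one-sided gaps". First, apply Corollary \ref{C:partition of nv} to write $S=S_1\cup\dots\cup S_k$, where each $S_i$ is definable and $\conv(S_i)\cap M\sub \cl(S_i)$. It suffices to handle each $S_i$ separately and then discard finitely many more points. Indeed, there are only finitely many points of $S$ that are not interior to $\conv(S_i)$ for their own class, namely the possible maxima and minima of the $S_i$. The points where two classes interleave are handled by the same argument below, applied within a single class.

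So fix $i$ and $a\in S_i$, and suppose $a$ is neither $\max S_i$ nor $\min S_i$. We show $a=\inf_{\widebar M}\{s\in S_i:s>a\}$; the sup statement is symmetric. Suppose instead that $c:=\inf\{s\in S_i: s>a\}>a$ in $\widebar M$. Since $a$ is nonvaluational, the cut $a$ is not invariant under any nonzero translation. Moreover, $M$ is dense in $\widebar M_{nv}$ by Fact \ref{F: M*}(1), and one checks directly that for any cut $c>a$ with $a$ nonvaluational there is $m\in M$ with $a<m<c$. Now $m\in\conv(S_i)\cap M\sub\cl(S_i)$, so every neighbourhood of $m$ meets $S_i$. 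By density of $M$ there is an open interval around $m$ lying strictly between $a$ and $c$, and it contains no element of $S_i$. This is a contradiction. Hence $a=\inf\{s\in S_i:s>a\}\geq\inf\{s\in S:s>a\}\geq a$, and the same holds for $S$. For the "in particular" part, let $F$ be the finite set of endpoints of the $S_i$. On $S\setminus F$, each point is a two-sided limit of points of $S$ in the $\widebar M$-sense, so the order is dense. Basic $\widebar M$-neighbourhoods $(u,v)\cap S$ with $u<a<v$ in $M$ then contain order intervals $(s_1,s_2)$ with $s_1,s_2\in S$, and conversely. The converse uses that between $s_1<a$ and $a$ there is some $u\in M$, by nonvaluationality. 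So the two topologies agree.

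The main obstacle is the step that produces a point of $M$ strictly between $a$ and $c$. This needs care if $c$ is a cut that $M$ approaches only from one side, or if $c$ is not itself definable as an element of $S$. I would handle it directly from the definition of cuts: $c>a$ as initial segments means $c\setminus a\neq\emptyset$, and any $m\in c\setminus a$ with $m$ not the supremum of $a$ works. If $a$ is rational we use $a<m$ directly, and otherwise nonvaluationality together with the description of $G(\CC)$ lets us shift slightly inside. The rest of the argument is routine bookkeeping of finitely many endpoints.
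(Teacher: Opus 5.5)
Your proposal is correct and follows the paper's proof: partition $S$ via Corollary \ref{C:partition of nv}, discard the finitely many maxima and minima of the pieces $S_i$, and check that every remaining point is a two-sided $\widebar M$-limit of points of $S$. Your argument via $m\in\conv(S_i)\cap M\subseteq\cl(S_i)$ supplies the details the paper leaves implicit. The step you flag as the main obstacle is in fact automatic: cuts are initial segments without a maximum, so any two distinct elements of $\widebar M$ have an element of $M$ strictly between them, and nonvaluationality is not needed there. It enters only through Corollary \ref{C:partition of nv}.
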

\begin{proof}
By Lemma \ref{L:holes between nv}, there is a partition $S=\bigcup S_i$, such that each $S_i$ is dense in $\conv(S_i)$. If $a\in S$ is neither a maximum nor a minimum of one of the $S_i$ then it satisfies the claim. 

After removing these finitely many points, the set $S$ is  densely ordered and its order topology agrees with the $\widebar M$-topology.
\end{proof}

A noteworthy corollary of the above lemma is that if $E$ is a definable convex equivalence relation on $M$ then, with finitely many exceptions, the classes of $E$ must be bounded, above and below, by valuational cuts. This gives an alternative proof of the (known) fact that nonvaluational weakly o-minimal ordered groups have Exchange. 

    \section{Definable maps into cuts}
       
As already noted in \cite{MacMaSt}, definable functions into $\widebar M$ play a crucial role in the study of weakly o-minimal structures (e.g., full cell decomposition is formulated in terms of functions into $\widebar M$). In the present section, we prove some fundamental structural properties of such functions in the context of weakly o-minimal expansions of fields. 

\textbf{Throughout this section, let $\CK$ be a sufficiently saturated weakly o-minimal expansion of an ordered field $K$.} \\

The following was recently proved by the fourth author: 
        

\begin{fact}\cite{Joh:exchange}\label{F:exchange}
    If $\CK$ is a weakly o-minimal expansion of a field then $\dcl$ satisfies exchange. Equivalently,\footnote{\cite[Theorem 5.4]{Kulp}} if $f:K\to K$ is a definable locally constant function then $Im(f)$ is finite.
\end{fact}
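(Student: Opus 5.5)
We argue by contradiction. Suppose $f:K\to K$ is definable and locally constant with infinite image. The goal is to turn $f$ into a unary definable set with infinitely many convex components, and the field structure is what should make this possible. (The equivalence with exchange is the cited result of Kulpeshov, so we work only with the locally-constant formulation.)

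\textbf{Step 1: normalize $f$.} By Lemma \ref{L: images are WOM}(3), after shrinking to an interval $I_0$ we may assume that $f$ is weakly monotone with convex fibers and infinite image. So $f$ is given by a definable convex equivalence relation $E$ on $I_0$ with infinitely many classes, and each class has nonempty interior because $f$ is locally constant. By the remark after Corollary \ref{C:nv are dense}, all but finitely many classes are bounded by \emph{valuational} cuts. Fix an $A$-generic point $a$ with $f(a)\notin\acl(A)$. Let $C_x$ denote the $E$-class of $x$ and $G_x$ the invariance group of $\sup C_x$. This gives a definable family of nontrivial convex subgroups of $(K,+)$.

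\textbf{Step 2: make the data locally uniform.} Convex subgroups are linearly ordered by inclusion, so $x\mapsto G_x$ is a definable map into a definable linear order. Corollary \ref{C:montonicity} and Lemma \ref{Ramsey} apply to it near the generic $a$. Combining these with Lemma \ref{L: Johnson A7.2} (taking $\varphi(x,\epsilon)$ to say that $[-\epsilon,\epsilon]\subseteq G_x$, or that $C_x$ has length $>\epsilon$), we obtain the following. There are $\epsilon_0>0$ and an $A\epsilon_0$-definable interval $J\ni a$ such that $f(J)$ is still infinite and every class meeting $J$ has stabilizer containing $\epsilon_0$. Moreover, $G_x$ is either constant on $J$ or strictly monotone in $x$.

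\textbf{Step 3: use multiplication to force infinitely many components.} In the constant case $G_x=G$, the cuts $\sup C_x$ for $x\in J$ are infinitely many distinct $G$-invariant cuts. Consider the definable set
\[
D=\{t\in K: \exists x\in J\ (x+t\in C_x \text{ and } x+2t\notin C_x)\}.
\]
More generally, one could use $D=\{t : t\cdot c \in G_{x}\}$ for suitable $c$, which rescales the endpoints of the classes into a fixed window. The aim is to show that this rescaled set is a union of infinitely many gaps separated by intervals, which contradicts weak o-minimality. In the strictly monotone case, each $G_x$ is the convex hull of a subgroup such as $\{t: |t|\cdot\delta(x)<1\}$, and these form a chain. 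Multiplication by elements of $K$ permutes convex subgroups. Choosing $b$ in $G_x\setminus G_{x'}$ for $x<x'$ close, we get a definable function $x\mapsto$ (a cut determining $G_x$) into a sort of cuts. Its image should again produce infinitely many components, for instance the set of $t>0$ such that $t\in G_x$ but $t^2\notin G_x$ for some $x\in J$.

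\textbf{Main obstacle.} The hard part is Step 3. An ordered abelian group alone can fail exchange, so the contradiction must genuinely use multiplication, and a naive rescaling may collapse all the classes into a single convex piece. I would first try to exploit that the length of $C_x$, divided by a generator-scale of $G_x$, gives a definable map into $K$ (or into the nonvaluational cuts). I would then apply Corollary \ref{C:interval inside closure} or Lemma \ref{L:holes between nv} to that map to derive an infinite strictly monotone sequence with gaps, which is the desired contradiction.
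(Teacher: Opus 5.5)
Your proposal has a genuine gap. Steps 1--2 only set up notation, and Step 3, which has to produce the contradiction, is not carried out: you list candidate sets and say what you ``would try'', but none is shown to have infinitely many convex components.

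\textbf{The candidate sets do not work.} Take the first one, in the case $G_x=G$. For fixed $x$ and $t>0$, the condition defining $D$ says roughly $\tfrac12(\sup C_x-x)\le t<\sup C_x-x$. Since $\sup C_x$ is $G$-invariant, the cut $\sup C_x-x$ decreases down to $\sup G$ as $x$ runs through $C_x$. So every class contributes essentially the same kind of convex set of positive $t$, beginning just above $G$, and the union of these sets is again convex.

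\textbf{The dichotomy in Step 2 is vacuous.} $G_x$ depends only on the $E$-class of $x$, so $x\mapsto G_x$ is locally constant on $J$ and never strictly monotone. If it takes infinitely many values, you have only produced another locally constant map with infinite image, now into a sort of valuational cuts. That is the original problem again. The invariance groups of the endpoints do not carry enough information to force infinitely many components.

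\textbf{The missing idea} is to compare the graph of $f$ with \emph{lines}, via the divided differences $m(x,y)=\frac{f(x)-f(y)}{x-y}$; this is where multiplication actually enters. The paper does not reprove this Fact; it cites \cite{Joh:exchange}. However, its proof of Theorem \ref{T:no lc into nv v2} generalizes the Fact, does not use it, and applies directly to $f:K\to K$, because then $\Im(f)$ is an infinite definable subset of $K$ and so contains an interval. The argument runs as follows.
\begin{enumerate}
\item Make $f$ weakly increasing with convex fibers and image dense in an interval, and take $a$ with $f(a)$ generic.
\item If $f$ is constant on $[a,a+c]$, then any $y$ with $f(a)<f(y)<f(a)+c\epsilon$ must satisfy $y-a>c$, so $0<m(a,y)<\epsilon$. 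Hence $0$ lies in the closure of the set of such slopes.
\item By Corollary \ref{C:interval inside closure}, these slopes are dense in an interval $[0,\epsilon]$. By Lemma \ref{L: Johnson A7.2}, this holds uniformly for $x$ near $a$.
\item Work over a small model $\CN\ni a$. Using the three-point inequalities for divided differences, choose $x_0<x_1<x_2<\cdots$ with each $f(x_n)$ realizing $(f(a)/N)^+$ and $m(x_0,x_2)<m(x_0,x_4)<\cdots<m(x_0,x_3)<m(x_0,x_1)$.
\item By saturation take $d\in K$ strictly between the two sequences, and let $\ell(x)=d(x-x_0)+f(x_0)$.
\end{enumerate}
Then $\{x: f(x)<\ell(x)\}$ contains every $x_{2n+2}$ and no $x_{2n+1}$, so it has infinitely many convex components. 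This is the contradiction your Step 3 was looking for.
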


The following standard definition is important for everything to come.

 \begin{notation}\label{N:divided difference}
    For a  function $f: K\to K$ or $f: K \to \widebar K_{nv}$, the \emph{divided difference of $f$} is the function $m_f(x,y):=\frac{f(x)-f(y)}{x-y}$, defined on pairs of \emph{distinct} elements $x,y\in \dom(f)$.  When no confusion can arise we write $m(x,y)$ for $m_f(x,y)$. 
\end{notation}

\subsection{Definable functions into nonvaluational cuts}
    Recall that $\widebar K_{nv}$ is the set of all nonvaluational cuts in $\widebar K$ (including rational cuts identified with the points of $K$); it is a real closed field (Fact \ref{F: M*}).

    In this section we extend Fact \ref{F:exchange}:


\begin{theorem}\label{T:no lc into nv v2}
    A definable locally constant unary function $f:I\to \widebar K_{nv}$ has finite image.
\end{theorem}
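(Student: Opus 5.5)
The plan is to argue by contradiction and reduce, as far as possible, to Fact~\ref{F:exchange}. Suppose $f:I\to \widebar K_{nv}$ is definable, locally constant, and $S:=\Im(f)$ is infinite. By Lemma~\ref{L: images are WOM}(3) we may shrink $I$ to an interval on which $f$ is weakly monotone, say weakly increasing, with convex fibers and infinite image. Local constancy makes every fiber a nonempty open convex subset of $K$. By Corollary~\ref{C:partition of nv} and Corollary~\ref{C:nv are dense}, after removing finitely many points and passing to one piece, we may assume $S$ is densely ordered and $\conv(S)\cap K\subseteq \cl(S)$. Hence there is an open interval $J\subseteq K$ lying inside $\conv(S)$ in which $S$ is dense in the $\widebar K$-topology.

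Next I would transfer the situation to the domain side. For $t\in J$ let $h(t):=\sup_{\widebar K}\{x\in I: f(x)<t\}$, a definable cut in $K$. Since $S$ is dense in $J$ and each fiber is a nonempty open interval, $h:J\to \widebar K$ is strictly increasing. Moreover, between $h(t)$ and $h(t')$ there is always a whole open interval of $K$ on which $f$ is constant. The gaps of $h(J)$ are exactly the fibers of $f$, and $f$ is determined by the position of $x$ relative to $h(J)$. The goal is to use the field structure to produce, from this configuration, a definable locally constant function $g$ from an interval into $K$ (or into $\widebar K$, which is then eliminated) with infinite image. That would contradict Fact~\ref{F:exchange}.

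To build $g$ I would exploit that each $f(x)$ is nonvaluational, so it is approximable by elements of $K$ to any precision. Use Lemma~\ref{L: Johnson A7.2} with $\varphi(x,\epsilon)$ saying that the fiber of $x$ contains an interval of length $\epsilon$ around $x$. This gives a fixed $\epsilon_0>0$ and a convex $A\epsilon_0$-definable $C$, with $f(C)$ still infinite, on which all fibers are uniformly $\epsilon_0$-wide. Since $S$ is dense in $J$ and its elements are nonvaluational, one can then compare $f(x)$ and $f(y)$ through the $K$-valued data $h^{-1}$ of the fiber endpoints. Concretely, I would consider the two-variable function $(x,y)\mapsto f(x)-f(y)$, which takes values in a sort of nonvaluational cuts by Fact~\ref{F: M*}. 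Combining it with $x\mapsto x$ on the uniform-width pieces, I would try to extract a $K$-valued locally constant function with infinite image, in the spirit of the original proof of Fact~\ref{F:exchange} in \cite{Joh:exchange}.

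The main obstacle is this last step. Weakly o-minimal expansions lack definable Skolem functions, so one cannot simply choose $g(x)\in K$ close to $f(x)$. Instead the contradiction must come from the interaction between the dense, nonvaluational sort $S$ and the uniformly wide fibers. If the direct reduction fails, the fallback is to rerun Johnson's exchange argument verbatim with values in $\widebar K_{nv}$. That argument only uses field operations and order, and by Fact~\ref{F: M*} both extend continuously and definably sort-by-sort to $\widebar K_{nv}$. The work then lies in checking that every definable set it uses remains definable when the values are nonvaluational cuts.
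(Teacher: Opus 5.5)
Your preliminary reductions are sound and agree with the paper's set-up: weakly increasing $f$ with open convex fibers, $S$ densely ordered with $\conv(S)\cap K\subseteq\cl(S)$, and uniformly wide fibers via Lemma~\ref{L: Johnson A7.2}. After that, however, the proposal has a genuine gap, because no step actually produces a contradiction.

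\emph{The main plan is not carried out.} You propose to manufacture a $K$-valued locally constant $g$ with infinite image and invoke Fact~\ref{F:exchange}, but you never construct $g$. You note yourself that there is no definable way to pick points of $K$ near $f(x)$. The proposed comparison through ``$h^{-1}$ of the fiber endpoints'' is not even well defined: if $s\in S\setminus K$ is a limit of $S_{<s}$, then $\inf f^{-1}(s)$ is not a value of $h$. The paper leaves open (Question~\ref{Q: Is exchange enough 1}) whether exchange implies the theorem without using the field. So such a reduction would have to exploit multiplication in some essential way, and you do not say how.

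\emph{The fallback is only a description.} Rerunning Johnson's argument ``verbatim'' in $\widebar K_{nv}$ cannot literally be verbatim. You never say how the nonvaluational hypothesis is used, beyond noting that $f(x)$ can be approximated from $K$. Yet it must be used essentially: in a valued field with valuation ring $\CO$, the map $x\mapsto\sup(x+\CO)$ is definable and locally constant, with infinite image consisting of valuational cuts.

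The missing argument works with the divided differences $m(x,y)=\frac{f(x)-f(y)}{x-y}\in\widebar K_{nv}$ and contradicts weak o-minimality of $K$ directly, with no reduction to exchange.
\begin{enumerate}
\item Fix $a$ with $f(a)$ generic in $S$ and some $s>f(a)$ in $S$. Set $D^s(x)=\{m(x,y): f(x)<f(y)<s\}$.
\item Local constancy at $a$ together with density of $S$ gives $0\in\cl(D^s(a))$. Any $y$ with $f(y)$ very close to $f(a)$ must lie beyond the interval of constancy around $a$, so the slope $m(a,y)$ is tiny.
\item $D^s(a)$ is a family of positive \emph{nonvaluational} cuts, not a subset of $K$, so weak o-minimality does not apply to it directly. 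Instead, Corollary~\ref{C:interval inside closure} gives $[0,\epsilon]\subseteq\cl(D^s(a))$. It rests on Lemma~\ref{L:holes between nv}, which is exactly where nonvaluationality enters. Lemma~\ref{L: Johnson A7.2} then makes this uniform for $x$ near $a$.
\item Work over a small model $\CN\ni a$ and use the three-point inequalities for divided differences. This lets you build $x_0<x_1<x_2<\cdots$ with every $f(x_n)\models(f(a)/N)^+$ and $m(x_0,x_2)<m(x_0,x_4)<\cdots<m(x_0,x_3)<m(x_0,x_1)$.
\item By saturation, choose $d\in K$ strictly between the even-indexed and odd-indexed slopes. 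The definable set $\{x\in K: f(x)<d(x-x_0)+f(x_0)\}$ then contains every $x_{2n+2}$ and no $x_{2n+1}$. It therefore has infinitely many convex components, contradicting weak o-minimality.
\end{enumerate}
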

\begin{proof}
    Assume towards a contradiction that $f: I\to \widebar K_{nv}$ is a counterexample, i.e.,  a definable locally constant function with infinite image $S$. 
    
      There is no harm in assuming 
      that the fibers of $f$ are convex, that $f$ is weakly increasing,  and that $S$ is weakly o-minimal with the induced order (Lemma \ref{L: images are WOM}(3)). 
      Applying Corollary \ref{C:partition of nv}, we may assume that $S$ is dense in $\conv(S)$, and in particular, $(S,<)$ is densely ordered.
      
   For simplicity, assume everything is defined over $\emptyset$. Fix $a\in I$ with $f(a)$ generic in $S$ (this can be done since $S$ is weakly o-minimal).  Let $\CN\prec \CK$ be a sufficiently saturated small model containing $a$. 
   For $b\in \widebar N_{nv}$, let $(b/N)^+$ denote the positive infinitesimal type of $b$ over $N$, namely the collection of formulas in the variable $x\in M$: $$\{b<x<c: c\in N, c>b\}.$$

     For $s\in S$ with $s>f(a)$ and $x\in I$, let  $$D^s(x):=\{m(x,y):y\in I \text{ with } f(x)<f(y)<s\}.$$ 

\begin{lemma}\label{L: 2.1}

 For every $s\in S$ with $s>f(a)$, there is an open $J\ni f(a)$  and $\epsilon>0$, such that for all $x\in f^{-1}(J)$,   $[0,\epsilon]\sub \cl(D^s(x))$.
\end{lemma}
\begin{proof} 
\renewcommand*\qedsymbol{$\square$ (lemma)}
Fix $s>f(a)$. By INP, applied to $f(a)\in S$, we may assume that $f(a)$ is still generic in $S$ over $s$. 
    The following observation is similar to \cite[Lemma A.15]{Joh:exchange}: 
    \begin{claim}
         $0\in\cl(D^s(a))$.
    \end{claim}
    \begin{claimproof}
        Let $\epsilon>0$. Let $c>a$ be such that $f$ is constant on $[a,a+c]$. Since $S$ is dense in $\conv(S)$, we can find $y\in I$ such that $f(a)<f(y)<\min(s,f(a)+c\epsilon)$. As $f$ is constant on $[a,a+c]$, necessarily $y-a>c$. Thus $m(a,y)=\frac{f(y)-f(a)}{y-a}<\frac{c\epsilon}{y-a}<\epsilon$, and $m(a,y)\in D^s(a)$.
    \end{claimproof}
By Corollary \ref{C:interval inside closure},  we conclude that there exists $\epsilon>0$, such that $[0,\epsilon]\sub \cl(D^s(a))$.

Since $f(a)\notin \acl(s)$, we may apply  Lemma \ref{L: Johnson A7.2} and find an $\epsilon_0>0$ and a definable convex set $C\ni a$ such that $f(a)\notin \acl(s\epsilon_0)$ and for all $x\in C$, we have $[0,\epsilon_0)\in \cl(D^s(x))$.  Because $f(a) \notin \acl(s\epsilon_0)$ it follows that $f(C)$ is infinite and $f(a)$ is in the interior of $f(C)$. Since each fiber of $f$ is assumed to be convex, we can now find an open $J\sub f(C)$ such that $f^{-1}(J)\sub C$ and $f(a) \in J$.
\end{proof}

   \begin{claim}\label{C: technical}
       For all $x$ such that  $f(x)\models (f(a)/N)^+$, all $d\models (0/N)^+$ and $0\leq c<d$, there is $y>x$ with $f(y)\models (f(a)/N)^+$, such that $c<m(x,y)<d$.
   \end{claim}
   \begin{claimproof}
   Fix $x$ so that $f(x)\models (f(a)/N)^+$, $c< d$ and $d\models (0/N)^+$. By compactness, it is sufficient to see that for every $s>f(a)$, $s\in N$, there exists $y$, $f(a)<f(y)<s$, such that $m(x,y)\in (c,d)$. 
   
   Fix $s>f(a)$ in $N$. 
By Lemma \ref{L: 2.1}, applied in $\CN$,  there exists an open $N$-interval $J\ni f(a)$ and $\epsilon>0$ in $N$, such that for all $x\in f^{-1}(J)$, $[0,\epsilon]\sub \cl(D^s(x))$.

By our assumptions on $x,c,d$,  we have $f(x)\in J$  and  $(c,d)\sub [0,\epsilon]$. Hence there is $y$, $f(x)<f(y)<s$ such that $m(x,y)\in (c,d)$. \end{claimproof}

Our goal is to find an infinite strictly increasing sequence $(x_n)_{n\geq 0}$ in $I$, such that for all $n$, $f(a_n)\models (f(a)/N)^+$, and \begin{equation}\label{eq: sequence} 
    m(x_0,x_2)<m(x_0,x_4)<m(x_0,x_6)<\cdots \,\,\,\, \cdots <m(x_0,x_5)<m(x_0,x_3)<m(x_0,x_1)\end{equation}

We shall be using the following standard fact, true in any ordered field: If $x_0<x_1<x_2$ then for any function $f$ on these points, 
\begin{gather} \label{eq: properties} 
    m(x_1,x_2)<m(x_0,x_1)\Longrightarrow m(x_1,x_2)<m(x_0,x_2)<m(x_0,x_1) \text{ and }\\
    m(x_0,x_1)<m(x_1,x_2)\Longrightarrow m(x_0,x_1)<m(x_0,x_2)<m(x_1,x_2).
\end{gather}

We choose the first elements as follows:
Fix $x_0\in I$ with $f(x_0)\models (f(a)/N)^+$. Claim \ref{C: technical} supplies $x_1>x_0$ with $f(x_1)\models (f(a)/N)^+$ such that $m(x_0, x_1)\models (0/N)^+$. Next, we apply the claim to $f(x_1)\models (f(a)/N)^+$ and $0<m(x_0,x_1)\models (0/N)^+$ to get $x_2>x_1$, $f(x_2)\models (f(a)/N)^+$, with $0<m(x_1,x_2)<m(x_0,x_1)$. By Equation (\ref{eq: properties}),   $0<m(x_0,x_2)<m(x_0,x_1)$.
The rest of the sequence (\ref{eq: sequence}) is obtained by repeatedly applying Claim \ref{C: technical}, together with Equations (2),(3).

Given such a sequence, by saturation, there is some element $d\in K$ with $m(x_{2n+2},x_0)<d<m(x_{2n+1},x_0)$ for all $n\geq 0$. Let $\ell(x)=d(x-x_0)+f(x_0)$ be the $\widebar K_{nv}$-line with slope $d$ through $(x_0,f(x_0))$. 

The definable set $\{x\in K: f(x)<\ell(x)\}$ contains all $x_{2n+2}$, for $n\geq 0$, and misses all $x_{2n+1}$, for $n\geq 0$, contradicting weak o-minimality.   This ends the proof of Theorem \ref{T:no lc into nv v2}.
\end{proof}

\begin{question} \label{Q: Is exchange enough 1} Theorem \ref{T:no lc into nv v2} clearly implies the exchange principle. Thus, it is natural to ask whether it is actually equivalent to it. Namely, is it true, under exchange alone (without assuming an underlying field) that every definable locally constant function into $\widebar M_{nv}$ has a finite image. 
\end{question}

\subsection{Definable functions into valuational cuts}

While, by Theorem \ref{T:no lc into nv v2}, in a weakly o-minimal field, functions into nonvaluational cuts are not locally constant (unless their image is finite), the aim of this section is to prove that functions into valuational cuts are always  locally constant. 

 We start working in a weakly o-minimal expansion of a group $\CM$. Later, we restrict to fields.
We keep the notation we used before:  $\widebar M$ is the collection of all definable cuts in $\CM$ and for  $S\sub \widebar M$, we let $\conv(S)$ denote the convex hull of $S$ in $\widebar M$. The proof is based on the following generalization of a compositional inverse, introduced in \cite{Joh:exchange}.

\def\gf{f_\downarrow}

\begin{definition}
    Given a definable, partial weakly increasing function $f:M\to \widebar M$, let $$a_f=\inf \Im(f)\,\,;\,\, b_f=\sup \Im(f),$$ and define $\gf:\conv(a_f,b_f)\cap M\to \widebar M$ to be the partial function defined by 
    $$
    \gf(y)=\sup\{x\in \dom(f): f(x)<y\}=\sup f^{-1}(\conv(-\infty, y)).$$ 
    

\end{definition}


By definition, $\gf$ is weakly increasing. Since $f$ takes values in $\widebar M$, $\gf$ is not its inverse, but if $f:(a,b)\to M$ is strictly increasing and continuous then $\gf=f^{-1}$, so $(\gf)_\downarrow=f$. Here are some more examples:

\begin{example}\label{Example: gf}

\begin{enumerate}
\item If $f\equiv c<\infty$ is a constant partial function then $\dom(\gf)=\emptyset$.

If $f$ takes $n$-many values, $n>1$, then $\gf$ takes $n-1$ values.


\item Assume that $f:M\to M$ is locally constant, weakly increasing, with convex infinite image. Then $\gf(y)=\inf f^{-1}(y)$ and $(\gf)_\downarrow(x)=f(x)$.

    \item  
    Assume that $\alpha:M\to M$ is a locally constant, weakly increasing function with convex infinite image. Define $f:M\to \widebar M$ by $f(x)=\sup \alpha^{-1}(x)$. Then, $\gf(y)=\alpha(y)$ and $(\gf)_\downarrow(x)=\inf \alpha^{-1}(x)\neq f(x).$

\end{enumerate}
\end{example}

The strategy is the following: Assume that $f:M\to \widebar M$ is a function into a valuational sort: We first show that for such an $f$,  the function $\gf$ is generically locally constant. By Theorem \ref{T:no lc into nv v2}, it follows that $\Im(\gf)$ is valuational except for finitely many values. Thus, applying the same reasoning to $\gf$ we conclude that $(\gf)_\downarrow$ is also generically locally constant. Finally, we show that if $f$ were strictly increasing then, generically, $(\gf)\downarrow=f$, reaching a contradiction (since $(\gf)_\downarrow$ must be locally constant). 

We first need the lemma below. Given $S\sub \widebar M$ and $d\in \widebar M$, let $S_{<d}=\{x\in S:x<d\}$.
\begin{lemma} \label{L: gaps in image} Assume that $\CM$ satisfies exchange.
If $f:M\to \widebar M$ is strictly increasing then for all but finitely many $d\in \Im(f)$, $d=\sup_{\widebar M}(\Im(f)_{<d})$
    
\end{lemma}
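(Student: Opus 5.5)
We argue by contradiction, producing a definable locally constant function $M\to M$ with infinite image and so contradicting exchange (via \cite[Theorem 5.4]{Kulp}, as in Fact \ref{F:exchange}). Since $f$ is strictly increasing, $f$ is injective. The property ``$d=\sup_{\widebar M}(\Im(f)_{<d})$'' is definable in $x$ where $d=f(x)$. Concretely, for $d=f(x)$ the condition fails iff there is $t\in M$ with $f(y)<t<d$ for all $y<x$ in $\dom(f)$, i.e. $t<d$ and $t\geq\sup f((-\infty,x)\cap\dom(f))$. So the set $B$ of $x\in\dom(f)$ for which the condition fails is definable. Suppose $B$ is infinite. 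By weak o-minimality, $B$ contains an open interval $I$, and we restrict $f$ to $I$.

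For $x\in I$, let $g(x):=\sup_{\widebar M}f(\{y\in I: y<x\})$. This is a definable cut, and $g(x)<f(x)$ strictly in $\widebar M$, since some $t\in M$ lies strictly between them. For $x<x'$ in $I$ we have $g(x)<f(x)\leq g(x')$, so $g$ is strictly increasing as well. The key point is that the gaps $(g(x),f(x))$ are pairwise disjoint nonempty open intervals of $\widebar M$, each containing an element of $M$.

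We now build a definable function into $M$. Consider the definable equivalence relation on the definable set $U:=\bigcup_{x\in I}\conv(g(x),f(x))\cap M$ given by ``$t,t'$ lie in the same gap''. Its classes are convex, and the map $h:U\to I$ sending $t$ to the unique $x$ with $g(x)<t<f(x)$ is definable and locally constant, since each class is convex with nonempty interior in $M$. Its image is all of $I$, which is infinite. This contradicts exchange, so $B$ is finite, which is the lemma.

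The step requiring care is verifying that $h$ is locally constant at every point of $U$, i.e. that each gap $\conv(g(x),f(x))\cap M$ is open in $M$. A point $t$ in the gap satisfies $g(x)<t<f(x)$ as cuts, and since $M$ is densely ordered and cuts are compared with elements of $M$, there is a neighbourhood of $t$ still strictly between them. Rational or irrational endpoints cause no issue, because we only use strict inequalities with elements of $M$. If the version of exchange we use is phrased for functions with interval domain, we restrict $h$ to an interval inside $U$ with infinite image, which weak o-minimality supplies since $U$ is a finite union of convex sets.
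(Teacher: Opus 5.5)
Your proof is correct and follows the paper's argument. The paper also takes, for each bad value $d$, the gap $Y_d=\{y\in M:\sup(\Im(f)_{<d})<y<d\}$; these gaps are pairwise disjoint, open and infinite, and sending each $Y_d$ to $f^{-1}(d)$ gives a definable locally constant function into $M$ with infinite image, contradicting exchange. Your preliminary restriction to an interval $I$ of bad points, with $g$ computed relative to $I$, is harmless but not needed.
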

\begin{proof} 
Assume towards contradiction that there are infinitely many $d\in \Im(f)$ such that $d>\sup_{\widebar M}(\Im(f)_{<d})$, and for each such $d$, let $Y_d=\{y\in M: \sup_{\overline{\bar M}}(\Im(f)_{<d})<y<d\}$. Each $Y_d$ is infinite and the $Y_d$ are pairwise disjoint. Let $Y$ be the union of all these $Y_d$ and define $s:Y\to M$ to be the function which sends each $Y_d$ to $f^{-1}(d)$ (using the fact that $f$ is injective). It is easy to verify that $s$ is a definable, locally constant function with infinite image, contradicting exchange.
\end{proof}

We shall need the following properties of $\gf$:

\begin{lemma}\label{L: properties of gf}
Assume that $f:M\to \widebar M$ is a partial weakly increasing function. 
\begin{enumerate}
    \item Assume that $y_1,y_2\in \dom(\gf)$, and $y_1<f(x_1)<f(x_2)<y_2$, for $x_1,x_2\in \dom(f)$. Then $\gf(y_1)<\gf(y_2)$.
    \item Assume that $\Im(f)$ has no minimum and no maximum. Then $$\inf \Im(\gf)=\inf\dom(f) \mbox{ and } \sup \Im(\gf) =\sup \dom (f),$$ so $\conv(Im \gf)=\conv(\dom f)$.
\end{enumerate}
\end{lemma}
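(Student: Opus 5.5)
Both parts come down to unwinding the definition $\gf(y)=\sup\{x\in\dom(f): f(x)<y\}$, computed in $\widebar M$, and using that $f$ is weakly increasing.

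For (1), the hypothesis $y_1<f(x_1)<f(x_2)<y_2$ gives the following.
\begin{itemize}
\item[(a)] Since $f(x_1)<y_2$, the point $x_1$ lies in the set $\{x: f(x)<y_2\}$. Since $f(x_2)<y_2$, so does $x_2$.
\item[(b)] Consider any $x\in\dom(f)$ with $f(x)<y_1$. Then $f(x)<f(x_1)$, and because $f$ is weakly increasing this forces $x<x_1$. So $x_1$ is an upper bound for $\{x: f(x)<y_1\}$, and $\gf(y_1)\le x_1$, where $x_1$ is viewed as a rational cut.
\item[(c)] The same argument with $f(x)<y_1<f(x_2)$ gives $x<x_2$ for every such $x$. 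Note that $x_1<x_2$, again because $f(x_1)<f(x_2)$.
\item[(d)] By (a), $\gf(y_2)\ge x_2$. The strict inequality then comes from the chain $\gf(y_1)\le x_1<x_2\le\gf(y_2)$.
\end{itemize}
One must check that strictness survives taking $\sup$ in $\widebar M$. It does: $\gf(y_1)$ is the cut at most $x_1$, while $\gf(y_2)$ is at least $x_2$, and $x_1<x_2$ as elements of $M$. Interpreting a rational cut $x$ as the cut $(-\infty,x)$, we get $(-\infty,x_1)\subsetneq(-\infty,x_2)$.

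For (2), I would show $\inf\Im(\gf)=\inf\dom(f)$; the statement about suprema is symmetric.
\begin{itemize}
\item[(a)] For every $y\in\dom(\gf)=\conv(a_f,b_f)\cap M$, the set $\{x: f(x)<y\}$ is nonempty. Indeed, $y>a_f=\inf\Im(f)$, so some $f(x)<y$. Its elements lie in $\dom(f)$, hence $\gf(y)\ge\inf\dom(f)$. This gives the inequality $\ge$.
\item[(b)] For the reverse inequality, take any $x_0\in\dom(f)$. Since $\Im(f)$ has no minimum, there is $x'\in\dom(f)$ with $f(x')<f(x_0)$.
\item[(c)] The cut $f(x')$ is strictly below the cut $f(x_0)$, so the initial segments differ. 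Pick $y\in M$ with $f(x')<y\le f(x_0)$, i.e.\ $y$ in the second segment but not the first. Then $y>a_f$. Also $y<b_f$, because $\Im(f)$ has no maximum and so some value of $f$ exceeds $f(x_0)\ge y$. Hence $y\in\dom(\gf)$.
\item[(d)] Now $\{x: f(x)<y\}$ contains no $x\ge x_0$, since such an $x$ has $f(x)\ge f(x_0)\ge y$. Thus $\gf(y)\le x_0$.
\item[(e)] As $x_0\in\dom(f)$ was arbitrary, $\inf\Im(\gf)\le\inf\dom(f)$.
\end{itemize}
The conclusion $\conv(\Im\gf)=\conv(\dom f)$ then follows formally, since both sets have the same infimum and supremum in $\widebar M$. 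This is where the no-minimum/no-maximum hypothesis is needed: it guarantees that every point of $\dom(f)$ has $\gf$-values approaching it from the appropriate side.

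The only real care needed is in (2)(c): finding $y\in M$ strictly between the relevant cut values. This works by the definition of the order on cuts, where distinct cuts differ by an element of $M$, together with the care taken with $\le$ versus $<$ between cuts and elements.
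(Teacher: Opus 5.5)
Your proof is correct and follows the paper's route: the paper only says that (1) is immediate from the definition of $f_\downarrow$ and that (2) follows from (1) and the definition, and your bounds $f_\downarrow(y_1)\le x_1<x_2\le f_\downarrow(y_2)$, together with their analogues in (2), are exactly that unwinding.

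Two small touch-ups. First, in (2)(c) an element $y\in f(x_0)\setminus f(x')$ satisfies $f(x')\le y<f(x_0)$, not $f(x')<y\le f(x_0)$; this is harmless, and since $f(x_0)$ has no largest element you can even take $f(x')<y<f(x_0)$. Second, equal $\inf$ and $\sup$ give equal convex hulls only if neither $\dom(f)$ nor $\Im(f_\downarrow)$ attains them. This does hold here: an endpoint of $\dom(f)$ would give an endpoint of $\Im(f)$, and then your bounds put every $f_\downarrow(y)$ strictly between $\inf\dom(f)$ and $\sup\dom(f)$.
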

\begin{proof} (1) This follows directly from the definition of $\gf$. (2) follows from (1) together with the definition of $\gf$.
    \end{proof}

Next we prove:
\begin{lemma}\label{L: valuational implies loc constant}
    If $f:M\to \widebar M$ is a partial weakly increasing function whose image is a valuational sort then outside a finite subset of $\dom(\gf)$, $\gf$ is locally constant.
\end{lemma}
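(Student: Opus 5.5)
Apply the monotonicity theorem (Corollary \ref{C:montonicity}) to $\gf$, which is a definable partial function $M\to\widebar M$. Away from finitely many points of $\dom(\gf)$, $\gf$ is then locally constant or locally strictly increasing; it is weakly increasing, so it cannot be locally strictly decreasing on an interval. It therefore suffices to rule out an open interval $J\sub\dom(\gf)$ on which $\gf$ is strictly increasing. Assume such a $J$ exists and aim for a contradiction with valuationality of $\Im(f)$.

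The key observation uses Lemma \ref{L: properties of gf}(1) in the contrapositive direction. If $y_1<y_2$ in $J$ and there is no $x\in\dom(f)$ with $y_1<f(x)<y_2$ (even allowing one value), then $\gf(y_1)=\gf(y_2)$. Indeed, both sups range over $\{x:f(x)<y_i\}$, and these sets agree when no value of $f$ lies in $[y_1,y_2)$; a single value $f(x_0)$ could make them differ, so we handle that case separately. Strict increase on $J$ therefore forces the image of $f$ to meet every subinterval of $J$ in at least two points, i.e.\ $\Im(f)$ is dense in $\conv(J)$ in the strong sense that every open subinterval of $J$ contains two distinct values of $f$.

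Next we use that each $c\in\Im(f)$ is valuational. Fix a generic $y\in J$ and a value $c\in\Im(f)$ close to $y$, with $c\in\conv(J)$. The stabilizer $G(c)$ is a nontrivial convex subgroup, so we can pick $0<\delta\in G(c)$. Then no element of $M$ lies ``at'' $c$; more precisely, the whole $M$-interval around $c$ of radius $\delta$ is on the two sides of $c$ (as $c+\delta$ and $c-\delta$ are the same cut). Now consider the definable family $\delta(c)$ of such witnesses and pass to a uniform $\epsilon>0$ by Lemma \ref{L: Johnson A7.2}, applied to the formula ``$\epsilon\in G(f(x))$'', which is downward closed in $\epsilon$. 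If $f(a)\notin\acl(A)$, this gives $\epsilon_0$ and a convex $C\ni a$ with $\epsilon_0\in G(f(x))$ for all $x\in C$, and $f(C)$ infinite. Then the values $f(x)$, $x\in C$, are pairwise at distance at least $\epsilon_0$. To see this, take $x<x'$ in $C$ with $f(x)<f(x')$: some $t\in M$ lies between them, and then $t+\epsilon_0$ and $t-\epsilon_0$ also lie in the corresponding complements by invariance. Hence any open interval of $M$ of length less than $\epsilon_0$ contains at most one value of $f\rest C$. Since $f(C)$ is infinite and contains values inside $J$ near $y$, this contradicts the density established above, so $\gf$ cannot be strictly increasing near such points.

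The main obstacle is the bookkeeping around genericity. We must ensure that the points of $J$ where the argument applies are all but finitely many: the values $f(a)$ with $f(a)\in\acl(A)$ are finitely many by exchange (Fact \ref{F:exchange}), while the generic $y\in J$ must have nearby values $f(a)$ that are non-algebraic. This is handled by choosing $y$ generic over $A$ and noting that values of $f$ inside a small interval around $y$ can be taken non-algebraic, since infinitely many values lie there and only finitely many are algebraic. A second point to check carefully is the single-value case of the contrapositive of Lemma \ref{L: properties of gf}(1), which only affects finitely many jumps in any short interval and is absorbed by the ``two values'' formulation.
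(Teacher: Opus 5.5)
Your proof is correct in substance. Its key step is the same as the paper's: Lemma \ref{L: Johnson A7.2}, applied to the downward-closed formula ``$\epsilon\in G(f(x))$'', gives one $\epsilon_0>0$ and a convex $C$ with $f(C)$ infinite on which every cut $f(x)$ is $\epsilon_0$-invariant.

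The difference is in how you finish. The paper concludes directly. If every value of $f$ is $\epsilon_0$-invariant, then for $y\in M$ and $|e|\le\epsilon_0$ we have $f(x)<y$ iff $f(x)<y+e$. Hence $f^{-1}(\conv(-\infty,y))=f^{-1}(\conv(-\infty,y+e))$, so $f_\downarrow(y)=f_\downarrow(y+e)$, and local constancy is immediate.

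You instead assume $f_\downarrow$ is strictly increasing on some $J$, deduce that $\Im(f)$ meets every open subinterval of $J$, and contradict this with the $\epsilon_0$-separation of the values of $f\rest C$. That is the same phenomenon seen from the other side: a shift by less than $\epsilon_0$ cannot cross an $\epsilon_0$-invariant cut. So the density detour is avoidable. It does, however, make explicit the localization the paper leaves implicit, namely choosing $a$ with $f(a)$ strictly inside the bad interval.

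Three points in your write-up need repair.

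\begin{itemize}
\item \textbf{Transfer from $\Im(f)$ to $f(C)$.} The final contradiction needs $f(C)$ to contain \emph{all} values of $f$ in some short subinterval of $J$, not just ``values near $y$''. This holds because $C$ is convex and $f$ is weakly increasing, so any value of $f$ lying between two values of $f\rest C$ is attained on $C$. Since $f(a)$ lies strictly inside $J$ and $f(C)\neq\{f(a)\}$, every $f$-value in a short $M$-interval of $J$ adjacent to $f(a)$ lies in $f(C)$. By density there are infinitely many such values within distance less than $\epsilon_0$, which is the contradiction.

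\item \textbf{Existence of a suitable $a$.} Your justification is wrong: values of $f$ lying in $\acl(A)$ need not be finitely many, and exchange plays no role here. Exchange is also not among the hypotheses of this lemma, unlike Lemma \ref{L: gaps in image}. What you need is only this: the set of values of $f$ strictly between two points of $J$ is infinite and definable over $A$ plus the parameters of $J$. By saturation it has an element not algebraic over those parameters, hence not in $\acl(A)$.

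\item \textbf{The ``(even allowing one value)'' remark.} This is muddled and should be dropped. The clean statement is: if no value of $f$ lies in $(y_1,y_2)$, then $f_\downarrow(y_1)=f_\downarrow(y_2)$, since a valuational cut is never rational and so never equals $y_1$. One value in every subinterval of $J$ already yields infinitely many.
\end{itemize}
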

\begin{proof} It is enough to prove that there exists an interval $I\sub \dom(\gf)$ on which $\gf$ is constant. By definition, each valuational cut is translation-invariant under some $\epsilon>0$. Thus, by  \cite[A.7]{Joh:exchange} there is $\epsilon>0$ and an interval $I\sub \dom(f)$ such that $f(I)$ is infinite and for all $x\in I$,  $f(x)+\epsilon=f(x)$, namely the cut $f(x)$ is invariant under $\epsilon$. By replacing $f$ with $f\rest I$, we may assume that this is true for all $x\in \dom(f)$.

But then, for every $y\in \dom(g_{f})$, $(f)^{-1}(\conv(-\infty,y))=(f)^{-1}(\conv(-\infty,y+\epsilon))$. It follows that
$g_{f}(y)=g_{f}(y+\epsilon)$. The same is true for all $e_1<\epsilon$, hence $g_{f}$ is locally constant on $\conv(f(I))\cap M$.
    \end{proof}

\begin{lemma}\label{L: double gf}
    Assume that $\CM$ satisfies exchange, $a<b$ in $M$, and that $f:(a,b)\to \widebar M$ is a definable,  strictly increasing function. Then $\dom((\gf)_\downarrow)=\dom(f)$ and  for all but finitely many $x\in \dom(f)$, $(\gf)_\downarrow(x)=f(x)$.

\end{lemma}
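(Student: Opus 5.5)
We argue directly from the definitions, using Lemma \ref{L: gaps in image} as the only nontrivial input; it is the only place where exchange enters. Write $g=\gf$, so $g:\conv(a_f,b_f)\cap M\to\widebar M$ is given by $g(y)=\sup\{x\in(a,b): f(x)<y\}$.

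\textbf{Domains.} Since $f$ is strictly increasing on the open interval $(a,b)$, its image has no minimum and no maximum. Lemma \ref{L: properties of gf}(2) then gives $\inf\Im(g)=a$ and $\sup\Im(g)=b$. Hence $\conv(a_g,b_g)\cap M=(a,b)$ as well, which is exactly $\dom(g_\downarrow)=\dom(f)$. The only point to check is that the endpoints behave correctly: $a$ and $b$ are not attained as values of $g$, because every $y$ in the domain of $g$ has some $f(x)<y$ and some $f(x')>y$. So no element of $(a,b)$ is lost from the domain.

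\textbf{One inequality for every $x$.} Fix $x\in(a,b)$. Then
$$g_\downarrow(x)=\sup\{y\in\conv(a_f,b_f)\cap M: g(y)<x\}.$$
Suppose $y<f(x)$. Pick $x'\in(a,x)$. If $f(x')\ge y$, then $g(y)\le x'<x$. Otherwise we would like some $x''<x$ with $y\le f(x'')$; this may fail only if $\sup f((a,x))<f(x)$ and $y$ lies in the gap. Thus, whenever $f(x)=\sup_{\widebar M}\Im(f)_{<f(x)}$, every $y<f(x)$ admits $x''<x$ with $f(x'')>y$. By Lemma \ref{L: properties of gf}(1), or directly from the definition, this gives $g(y)\le x''<x$. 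Therefore
$$g_\downarrow(x)\ge \sup\{y\in M:y<f(x)\}=f(x).$$

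\textbf{The reverse inequality.} Let $y\in M$ with $y>f(x)$. Then $x\in\{x_0: f(x_0)<y\}$. Since $f$ is strictly increasing, $f(x_1)<y$ for every $x_1$ slightly above $x$ as well, provided $f$ is right-continuous at $x$ in the appropriate sense, i.e.\ $f(x)=\inf\Im(f)_{>f(x)}$. Under that condition $g(y)>x$, so $g_\downarrow(x)\le y$ for every $y>f(x)$, and hence $g_\downarrow(x)\le f(x)$. The right-hand version of Lemma \ref{L: gaps in image}, $d=\inf_{\widebar M}(\Im(f)_{>d})$ for all but finitely many $d$, follows by the same proof: apply the lemma to $x\mapsto -f(-x)$ after identifying cuts with negated cuts. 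Since $f$ is injective, discarding finitely many $d\in\Im(f)$ amounts to discarding finitely many $x$.

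\textbf{Main obstacle.} The delicate part is bookkeeping with $\widebar M$-valued sups. A cut $y\in M$ with $y>f(x)$ might still fail to leave a whole right-neighbourhood of $x$ inside $\{x_0: f(x_0)<y\}$ if $f$ jumps right at $x$. This is exactly the exceptional set handled by the two-sided version of Lemma \ref{L: gaps in image}, and one must check carefully that both the left-gap and the right-gap exceptions are finite.
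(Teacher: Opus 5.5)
Your argument is correct and is essentially the paper's: you unwind $g_\downarrow(x)=\sup\{y: g(y)<x\}$ directly and use Lemma \ref{L: gaps in image} to get $f(x)=\sup_{\widebar M}\Im(f)_{<f(x)}$ off a finite set. The right-hand version of that lemma, and the ``main obstacle'' it is meant to handle, are superfluous: for $y>f(x)$ you already have $x\in\{x_0: f(x_0)<y\}$, so $g(y)\ge x$, and since $g_\downarrow$ is defined by the strict inequality $g(y)<x$ and $g$ is weakly increasing, this gives $g_\downarrow(x)\le f(x)$ for every $x$ (indeed $g_\downarrow(x)=\sup_{t<x}f(t)$ everywhere), so only left gaps produce exceptions, exactly as in the paper.
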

\begin{proof}

Since $f$ is strictly increasing, $\Im(f)$
has no maximum or minimum, so  by Lemma \ref{L: properties of gf}, $\conv(\Im(\gf))=\conv(a,b)$. By definition, $\dom((\gf)_\downarrow)=(a,b)=\dom(f)$.  By Lemma \ref{L: gaps in image}, we may assume, after removing a finite subset, that for all $x\in (a,b)$, $$f(x)=\sup(\Im(f)_{<f(x)}).$$ 

 We  claim that for $x\in \dom(f)$, $(\gf)_\downarrow(x)=f(x)$. 
Indeed, take $x\in (a,b)$ and let $h=f\rest(a,x)$. 

By definition,  if $a_h=\inf \Im(h)$ and $b_h=\sup \Im(h)$ then
$$\dom(h_\downarrow)=\conv(a_h,b_h) \cap M.$$

Applying again, Lemma \ref{L: properties of gf} to $h$ and $h_\downarrow$, we conclude that $\conv(\Im(h_\downarrow))=\conv(a,x)$, and therefore $(h_\downarrow)^{-1}(\conv(a,x))=(\gf)^{-1}(\conv(a,x))=\conv(\dom(h_\downarrow))$. 

Finally, our assumption on $\Im(f)$, $\sup \Im(h)=\sup \Im(f)_{<f(x)}=f(x)$, and therefore 
$$(\gf)_\downarrow(x)=\sup\, (\gf)^{-1}(\conv(-\infty,x))=\sup \Im(h)=f(x),$$
which ends the proof.
\end{proof}

We can now prove:
\begin{theorem}\label{T:func into val is lc}
  Assume that $\CK$ expands a field. Then every definable partial function $f:K\to \widebar K$ whose image consists of valuational cuts must be locally constant.
\end{theorem}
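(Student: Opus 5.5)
We follow the strategy outlined before Lemma \ref{L: gaps in image}. Suppose, towards a contradiction, that $f:K\to\widebar K$ is a definable partial function with valuational image which is not locally constant. Then, by the monotonicity theorem (Fact \ref{F:monotonicity} or Corollary \ref{C:montonicity}, applied with the target order $(\widebar K,<)$ restricted to $\Im(f)$), there is an open interval $(a,b)\sub\dom(f)$ on which $f$ is strictly monotone. Replacing $f$ by $x\mapsto f(-x)$ if necessary, we may assume $f$ is strictly increasing on $(a,b)$. Note that the image is still valuational, and $f(a,b)$ is infinite. By Fact \ref{F:exchange}, $\CK$ satisfies exchange, so Lemmas \ref{L: gaps in image} and \ref{L: double gf} apply.

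First step: by Lemma \ref{L: valuational implies loc constant}, $\gf$ is locally constant outside a finite subset of its domain. Its domain is $\conv(a_f,b_f)\cap K$, which is an infinite convex set. By Lemma \ref{L: properties of gf}(2), $\conv(\Im(\gf))=\conv(a,b)$, so $\Im(\gf)$ is infinite. Restrict $\gf$ to a subinterval $J$ on which it is locally constant with infinite image; such $J$ exists because removing finitely many points splits the domain into finitely many intervals, and one of them carries infinitely many values. By Theorem \ref{T:no lc into nv v2}, only finitely many values of $\gf\rest J$ can be nonvaluational. Indeed, the preimage of the nonvaluational values is a definable set, and $\gf$ restricted to it is locally constant into $\widebar K_{nv}$, hence has finite image. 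Deleting the finitely many convex fibers carrying those values, we obtain an interval $J'$ on which $\gf$ is locally constant, weakly increasing, with infinite image consisting of valuational cuts.

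Second step: apply Lemma \ref{L: valuational implies loc constant} to $g:=\gf\rest J'$. This gives that $g_\downarrow$ is locally constant off finitely many points. We then compare $g_\downarrow$ with $(\gf)_\downarrow=f$. By Lemma \ref{L: double gf}, $(\gf)_\downarrow=f$ on $(a,b)$ off finitely many points. For $x$ with $f(x)$ in the interior of $\conv(g(J'))$-region corresponding to $J'$, that is, $x$ in the open interval $\conv(\Im(g))\cap K$, which is infinite, we should have $g_\downarrow(x)=(\gf)_\downarrow(x)$. The reason is that $g_\downarrow(x)=\sup\{y\in J': \gf(y)<x\}$, and for such $x$ the set $\{y\in\dom(\gf):\gf(y)<x\}$ differs from its intersection with $J'$ only by an initial segment below $J'$; since $x>\inf\Im(g)$, the supremum is unchanged. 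Hence $f=g_\downarrow$ on an infinite interval minus finitely many points. The left side is strictly increasing there and the right side is locally constant, which is a contradiction.

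The main obstacle is this bookkeeping in the last step: checking that restricting $\gf$ to $J'$ does not change the double inverse on the relevant interval, and that this interval is genuinely infinite, i.e.\ contains infinitely many points of $(a,b)$. The latter follows since $\Im(g)$ is infinite and $\gf$ is weakly increasing with $\conv\Im(\gf)=\conv(a,b)$. If the restriction causes trouble, the alternative is to avoid restricting: remove from $\dom(\gf)$ the finitely many exceptional points together with the finitely many convex fibers carrying nonvaluational values. Lemma \ref{L: valuational implies loc constant} should still apply to the restricted function, because its proof only uses an $\epsilon$-invariance on an interval.
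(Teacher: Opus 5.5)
Your proof is correct and follows the paper's argument. You reduce to $f$ strictly increasing, show $\gf$ is generically locally constant with all but finitely many values valuational (via Theorem \ref{T:no lc into nv v2}), apply Lemma \ref{L: valuational implies loc constant} again to make $(\gf)_\downarrow$ generically locally constant, and contradict Lemma \ref{L: double gf}. Your extra bookkeeping (restricting $\gf$ to $J'$ so its image is genuinely valuational, and checking that $g_\downarrow$ agrees with $(\gf)_\downarrow$ on the interior of $\conv(\Im(g))\cap K$) makes explicit a step the paper's proof passes over.
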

\begin{proof} It is sufficient to prove that there exists an interval in $\dom(f)$ on which $f$ is constant.
 By  \cite[Theorem 3.3]{MacMaSt} (or Corollary \ref{C:montonicity}), $f$ is weakly monotone in a neighborhood of every $x\in \dom(f)$ outside a finite set. Thus, we may assume, towards a contradiction, that it is, locally, strictly monotone in a neighborhood of every point in $\dom(f)$. Hence, without loss of generality, $f$ is strictly increasing on its domain. 

By Lemma \ref{L: valuational implies loc constant}, the function $\gf$ is generically locally constant. By Theorem \ref{T:no lc into nv v2},  every $\gf(y)$ is valuational except for finitely many values. Thus, by Lemma \ref{L: valuational implies loc constant}, applied to $\gf$, we conclude that $(\gf)_\downarrow$ is also generically locally constant. By Lemma \ref{L: double gf}, we have generically, $(\gf)_\downarrow=f$. This is a contradiction since, on one hand, $(\gf)_\downarrow$ is locally constant and on the other $f$ is strictly increasing.
\end{proof}

Recall that, in the terminology of \cite[\S 9]{HaHaPeGps} two sorts $S_1, S_2$ are \emph{foreign} if there is no definable finite-to-finite correspondence $C \sub  X_1 \times  X_2$ , where $X_i \sub S_i^{n_i}$ are definable infinite subsets. By \cite[Lemma 9.2]{HaHaPeGps}, it is enough to consider the case when $n_1=n_2=1$. If $S_1, S_2$ are sorts in a weakly o-minimal structure then as $\dcl^{eq}=\acl^{eq}$ (\cite[Lemma 2.16]{MelRCVFEOI}), this is equivalent to the nonexistence  of any definable bijection between $f: X_1\to X_2$ with $X_i\sub S_i$ infinite. The results of the present section show that, in a weakly o-minimal field, valuational and nonvaluational sorts are foreign. To show this we need the following:

\begin{lemma}\label{L: F in nv is lsi to K}
Let $\CK$ be a weakly o-minimal expansion of a field. Let $S$ be an infinite nonvaluational sort. There exists an infinite definable subset $S'\subseteq S$ and a definable injection $S'\hookrightarrow K$.    
\end{lemma}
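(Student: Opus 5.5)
Since $S$ is an infinite definable family of cuts, write $S$ as the image of a definable partial function $g:K\to \widebar K$. The plan is to use Lemma \ref{L: images are WOM}(3) to fix an interval $I_0\sub \dom(g)$ on which $g$ is weakly monotone with convex fibers and $g(I_0)$ is infinite. By Corollary \ref{C:partition of nv} and Corollary \ref{C:nv are dense}, after shrinking we may assume that $g(I_0)$ is densely ordered and dense in its convex hull, with the order topology agreeing with the $\widebar K$-topology.

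By the monotonicity theorem (Corollary \ref{C:montonicity}), $g\rest I_0$ is, away from finitely many points, locally constant or locally strictly monotone. If $g$ is locally constant on some subinterval with infinite image, Theorem \ref{T:no lc into nv v2} gives a contradiction, since its values lie in $\widebar K_{nv}$. So after shrinking we may assume that $g$ is strictly monotone, say strictly increasing, on an interval $J$ with $g(J)$ infinite. Then $g\rest J$ is injective, and its inverse $g^{-1}: g(J)\to J\sub K$ is a definable injection. Taking $S'=g(J)$ proves the lemma.

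The one step that needs care is the reduction to the strictly monotone case. If $g$ were locally constant at generic points, $\Im(g)$ would be in bijection only with a quotient $I_0/E$, not with a subset of $K$. Theorem \ref{T:no lc into nv v2} rules this out: a definable locally constant function into nonvaluational cuts has finite image. Hence on all but finitely many convex pieces $g$ is strictly monotone, and since $\Im(g)$ is infinite, some such piece has infinite image. I do not expect any further obstacle.
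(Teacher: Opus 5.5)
\textbf{There is a genuine gap in the first step.} Your one-variable argument is fine, and it is essentially the paper's last step. Once the relevant part of $S$ is the image of a unary definable map, Theorem \ref{T:no lc into nv v2} rules out the locally constant alternative, and inverting a strictly monotone piece gives the injection into $K$.

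The problem is your first sentence. A sort of cuts is a definable family indexed by some $X\sub K^n$, i.e.\ the image of a definable $g:K^n\to\widebar K$. In general it is not the image of any definable unary function. For example, let $c\in\widebar K_{nv}\setminus K$ be a definable cut, as in the example of \cite{Joh:exchange}. Since $\widebar K_{nv}$ is a field and $c\notin K$, the map $(a,b)\mapsto a+bc$ is injective on $K^2$. So $S=\{a+bc: a,b\in K\}$ is in definable bijection with $K^2$, and by dimension it is not the image of a definable function with domain in $K$.

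What is missing is a reduction that produces an \emph{infinite definable subset} of $S$ parametrized by a single variable. The paper gets this from \cite[Lemma 5.1]{HaHaPeVF}: a finite-to-finite correspondence between an infinite subset of $S$ and an infinite subset of some $K/E$. It then uses $\dcl^{eq}=\acl^{eq}$ and the convexity of $E$ to obtain a partial function from $K$ onto $S'$, and your argument applies to that function.

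You can also close the gap directly.
\begin{enumerate}
\item After naming parameters, pick $s\in S$ with $s\notin\acl(\emptyset)$ and write $s=g(a_1,\dots,a_n)$.
\item Let $k$ be least with $s\in\acl(a_1,\dots,a_k)$.
\item Since $S$ is definably linearly ordered, an element of $S$ that is algebraic over a set is definable over it. Hence $s=h(a_k)$ for some partial function $h:K\to S$ definable over $a_1,\dots,a_{k-1}$.
\item The image of $h$ is definable over $a_1,\dots,a_{k-1}$ and contains $s$. By minimality of $k$, $s\notin\acl(a_1,\dots,a_{k-1})$, so this image is infinite.
\item Running your argument on $h$ gives an interval $J$ on which $h$ is strictly monotone, and $S'=h(J)$ works.
\end{enumerate}
Without a step of this kind, your proposal only proves the lemma for sorts that happen to be images of unary maps.
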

\begin{proof}
    Let $S$  be an infinite interpretable set consisting of nonvaluational cuts. By \cite[Lemma 5.1]{HaHaPeVF}, there is a finite-to-finite correspondence between an infinite definable subset of $S$ and an infinite definable subset of $K/E$ for some definable equivalence relation on $K$. Since in weakly o-minimal theories $\dcl^{\eq}=\acl^{\eq}$ (\cite[Lemma 2.16]{MelRCVFEOI}), this implies that there exists  a definable bijection between an infinite definable  $S'\sub S$ and a definable subset of $K/E$. As $E$ can be assumed to be a convex equivalence relation, we get a partial function from $K$ onto $S'$. If this function is injective, we are done. Otherwise it is locally constant, contradicting Theorem \ref{T:no lc into nv v2}.
\end{proof}

\begin{corollary}
    Let $\CK$ be a weakly o-minimal field. Let $S_1$ be a nonvaluational sort, $S_2$ a valuational sort. Then $S_1$ and $S_2$ are foreign. 
\end{corollary}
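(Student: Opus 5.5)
Assume towards a contradiction that $S_1$ and $S_2$ are not foreign. By the discussion preceding Lemma \ref{L: F in nv is lsi to K} (using $\dcl^{eq}=\acl^{eq}$ and \cite[Lemma 9.2]{HaHaPeGps}), there are infinite definable $X_1\sub S_1$, $X_2\sub S_2$ and a definable bijection $g:X_1\to X_2$. The plan is to transfer this bijection to a definable function from (a subset of) $K$ into valuational cuts, and then use the two theorems of this section: such a function must be locally constant (Theorem \ref{T:func into val is lc}), while the original bijection is injective.

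First, apply Lemma \ref{L: F in nv is lsi to K} to $X_1$, which is an infinite nonvaluational sort. This gives an infinite definable $X_1'\sub X_1$ and a definable injection $\iota:X_1'\hookrightarrow K$. Let $D=\iota(X_1')\sub K$. This is an infinite definable subset of $K$, so by weak o-minimality it contains an open interval $I$. Define $h:=g\circ \iota^{-1}\rest I : I\to X_2\sub \widebar K$. Then $h$ is a definable injective partial function from $K$ whose image consists of valuational cuts. Here I regard $S_2$ as a definable family of cuts, hence a subset of $\widebar K$; the image is infinite because $h$ is injective on an infinite interval.

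By Theorem \ref{T:func into val is lc}, $h$ is locally constant. A locally constant function on an interval is constant on some nonempty open subinterval, which is infinite. This contradicts injectivity of $h$, so $S_1$ and $S_2$ are foreign.

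The main point to check is the reduction to a bijection between infinite subsets of the sorts themselves, rather than a finite-to-finite correspondence. This step is supplied by the cited \cite[Lemma 9.2]{HaHaPeGps} together with $\dcl^{eq}=\acl^{eq}$: a finite-to-finite correspondence can be refined, on an infinite piece, to a definable bijection. One also needs that a valuational sort is genuinely a sort of cuts in $\widebar K$, so that Theorem \ref{T:func into val is lc} applies; that theorem allows partial domains, so restricting to $I$ is harmless. Beyond these points the argument is routine.
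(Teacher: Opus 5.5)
Your proposal is correct and follows the paper's proof essentially verbatim. Both reduce non-foreignness to a definable bijection between infinite pieces, use Lemma \ref{L: F in nv is lsi to K} to get an injection from an infinite subset of $K$ into the valuational sort, and contradict Theorem \ref{T:func into val is lc}; your restriction to an open interval only makes explicit why local constancy is incompatible with injectivity.
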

\begin{proof}
    Assume not. Then by the above discussion there is a definable injection $f: S_1\to S_2$. By Lemma \ref{L: F in nv is lsi to K} this implies that there is a definable (partial) injection $g: K\to S_2$ with infinite domain, contradicting Theorem \ref{T:func into val is lc}. 
\end{proof}

We do not know whether the following weaker property may be true: 
\begin{question}
 Assume that $f:K\to K$ is a definable continuous, strictly monotone partial map, defined on $a^-$, for some $a\in K$.  Is the cut $\lim\limits_{t\to a^-}f(t)$ necessarily nonvaluational? 
 \end{question}

 As in Question \ref{Q: Is exchange enough 1}, we may ask here:
\begin{question}
    Assume that $\CM$ is a weakly o-minimal structure satisfying exchange. Is it true that every definable map into a valuational sort is locally constant?
\end{question}

Note that if $\CM$ is weakly o-minimal then failure of exchange in $\CM$ yields functions into valuational sorts which are not locally constant. Indeed, if $\alpha:M \to M $ is a definable locally constant function with infinite image then $y\mapsto \sup \,\alpha^{-1}(y):\Im(\alpha)\to M$ is injective and, outside finitely many points, takes valuational cuts as images.

\section{Generic Differentiability}

In \cite[Problem 3]{MacMaSt} it is asked whether definable functions in weakly o-minimal expansions of fields are generically differentiable. In \cite{Joh:exchange}, the fourth author gave an example of a weakly o-minimal expansion of a field in which generic differentiability fails. His example yields a definable function whose derivative is not in $K$ but in $\widebar K_{nv}$. In the present section we show that this example is the worst that could happen. Namely, that every definable function is generically differentiable in $\widebar K_{nv}$.

Fix, for the rest of this section, an $\aleph_0$-saturated weakly o-minimal $\CK$ expanding a real closed field $K$. Note that  the lateral derivatives of definable functions  exist in $\widebar K$. 

\subsection{Local Convexity}
A definable function $f$ on an interval $I$ is \emph{convex} if $m_f(x,y)<m_f(y,z)$ for all $x<y<z$ in $I$. 
   In the present section, we strengthen the monotonicity theorem for weakly o-minimal fields by showing that definable unary functions are, at generic points, either convex, concave or affine.

\begin{proposition}\label{Johnson convexity}
    For $I\sub K$ an open interval, let $f:I\to K$ be a definable function which is nowhere locally affine. Then $f$ is locally convex (or concave) at any generic point. 
\end{proposition}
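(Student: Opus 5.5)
Fix $a\in I$ generic over the parameters $A$ defining $f$. The idea is to apply the Definable Ramsey Lemma \ref{Ramsey} to a binary relation built from divided differences, and to use a ternary comparison only at the base point $a$ itself. Since $f$ is nowhere locally affine, I expect Corollary \ref{C:montonicity} applied to the function
\[
y\mapsto m(a,y)=\frac{f(y)-f(a)}{y-a},\qquad y>a,
\]
to show that on some interval $(a,b)$ this function is either strictly increasing, strictly decreasing, or constant. The constant case means $f$ is affine on $[a,b)$. I would exclude it by letting
\[
\psi(x):=\text{``$f$ is affine on a right neighborhood of $x$''}.
\]
This is an $A$-definable property, and it holds at the generic point $a$, so it holds on an interval around $a$. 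That forces $f$ to be locally affine there, because affine pieces on overlapping right neighborhoods glue. This contradicts the hypothesis. (The monotonicity is applied to an $Aa$-definable function, and the finitely many exceptional points may include $a$ itself, so I would apply it on $(a,b)$ for $b$ close to $a$: a definable function on $(a,\infty)$ is monotone or constant on some $(a,b)$ by weak o-minimality, which suffices.)

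Say $y\mapsto m(a,y)$ is strictly increasing on $(a,b)$. Consider the $A$-formula
\[
\varphi(x,y):\ \exists z\,\bigl(y<z \wedge \forall w\in(y,z)\ m(x,w)>m(x,y)\bigr),
\]
or more directly the formula expressing that $m(x,\cdot)$ is increasing right after $y$. The cleaner choice is the formula
\[
\theta(x,y):=\exists c>y\ \forall w,w'\,\bigl(x<w<w'<c\to m(x,w)<m(x,w')\bigr).
\]
It holds for $x=a$ and every $y\in(a,b)$. Lemma \ref{Ramsey} then gives an interval $J\ni a$ such that for all $x_1<x_2$ in $J$, $m(x_1,\cdot)$ is increasing on a right neighborhood of $x_1$. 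I want more than this, namely $m(x_1,x_2)<m(x_1,x_3)$ for all $x_1<x_2<x_3$ in $J$. By the standard three-slope identity (Equations (2),(3) in the proof of Theorem \ref{T:no lc into nv v2}), this is equivalent to $m(x_1,x_2)<m(x_2,x_3)$, which is convexity. So I would instead apply Lemma \ref{Ramsey} to the relation
\[
\chi(x,y):\ m(x,\cdot)\text{ is strictly increasing on }(x,y).
\]
This holds for $x=a$ and all $y\in(a,b)$, so Lemma \ref{Ramsey} yields $J\ni a$ with $\chi(x_1,x_2)$ for all $x_1<x_2$ in $J$. Then for $x_1<x_2<x_3$ in $J$, $\chi(x_1,x_3)$ gives $m(x_1,x_2)<m(x_1,x_3)$, hence convexity on $J$. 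The decreasing case is symmetric and gives concavity.

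The main obstacle is the first step: securing that $m(a,\cdot)$ is strictly monotone on a right neighborhood of $a$ and ruling out the constant case cleanly via genericity. I expect this to be where care is needed, namely that the relevant sets are $A$-definable, so that generic points of $I$ lie in their interiors; this is also where INP (Fact \ref{F:inp}) may be needed. The Ramsey step is then immediate.
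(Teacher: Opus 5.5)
\textbf{The gap is in your first step.} Corollary~\ref{C:montonicity} together with weak o-minimality only gives some $b>a$ such that $y\mapsto m(a,y)$ is \emph{locally} strictly increasing (or decreasing) at every point of $(a,b)$. Your claim that ``a definable function on $(a,\infty)$ is monotone or constant on some $(a,b)$ by weak o-minimality'' is not justified. In a weakly o-minimal structure, local strict monotonicity on an interval does not imply strict monotonicity there, because the function can drop at an irrational cut. For example, in a real closed valued field take $t$ a positive infinitesimal and the map that is $x\mapsto x$ on $\mathcal{O}^{>0}$ and $x\mapsto x-t^{-1}$ above $\mathcal{O}$. It is locally strictly increasing on $(0,\infty)$, yet $g(1)>g(t^{-1/2})$.

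Shrinking $b$ helps only if the cuts where $m(a,\cdot)$ drops do not accumulate at $a$. Nothing you cite rules out such accumulation, for instance by cuts of the form $a+\sup H$ with $H$ ranging over convex subgroups. You do need genuine strict monotonicity on $(a,b)$, since that is exactly what makes $\chi(a,y)$ hold for all $y\in(a,b)$, the hypothesis of Lemma~\ref{Ramsey}. Moreover, for generic $a$ this monotonicity is equivalent to the proposition itself: local convexity at $a$ gives it back, and your Ramsey step gives the converse. It is also a ternary statement about triples $a<y_1<y_2$ at the base point, and the remark after Lemma~\ref{Ramsey} warns that such statements do not come for free. So the entire weight of the proof rests on the unproved step.

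The rest is fine up to small points. The constant case needs no $\psi$: if $m(a,\cdot)$ is constant on $(a,b)$, then $f$ is affine on the open interval $(a,b)$, which contradicts the hypothesis directly. In the Ramsey step, $\chi(x_1,x_3)$ does not compare values at $x_3$, so use $\chi(x_1,x_4)$ for some $x_4\in J$ with $x_4>x_3$.

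\textbf{How the paper avoids this.} It never needs monotonicity along a single one-sided germ.
\begin{enumerate}
\item Nowhere-local-affineness plus INP give an interval $I_0\ni a$ on which every secant line meets the graph only at its two defining points, so $m(a,\cdot)$ is injective on $I_0$.
\item The sets $m(a,a^-)$ and $m(a,a^+)$ are images of complete types, hence convex by Lemma~\ref{L: types are conex}, and they are disjoint by injectivity. So one lies entirely below the other, and compactness gives $I_1\ni a$ with $m(x,a)<m(a,z)$ whenever $x<a<z$ lie in $I_1$.
\item With $a$ generic over the parameters of $I_1$ (INP), $a$ is generic in the definable set of $y\in I_1$ satisfying $\forall x,z\in I_1\,(x<y<z\to m(x,y)<m(y,z))$. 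Hence this set contains an interval around $a$, which is convexity.
\end{enumerate}
The key idea is to compare the left germ with the right germ at the \emph{middle} point. That comparison involves only two complete types, where convexity of images is available. Your approach instead needs control along one germ, which weak o-minimality does not supply. To repair your proof you would need to import this middle-point argument, and at that point Lemma~\ref{Ramsey} is no longer needed.
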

\begin{proof} 
    Absorbing parameters into the language, we may assume that $f$ is $\0$-definable. 
    Let $a$ be a generic element in $I$. By our assumption on $f$ and weak o-minimality, for every $x\in I$, the intersection of the affine line connecting the points $(a,f(a))$ and $(x,f(x))$ with the graph of $f$ is finite. Thus, there is an  $\epsilon>0$, such that for all $x\in (a,a+\epsilon)$, the affine line through the points $(a,f(a))$ and $(x,f(x))$ does not intersect the graph of $f$ in the interval $(a,a+\epsilon)$.  By INP, we may assume that $\dim(a/\epsilon)=1$. It follows that there exists an interval $I_0\ni a$, such that for all $x,y\in I_0$, the affine line connecting the points $(x,f(x))$ and $(y,f(y))$ intersects the graph of $f$ only at $x$ and $y$. In particular, $m(a,-)$ is injective on $I_0$.

    Consider, in an elementary extension,  the convex sets  $J_0(a) =m(a,a^-)$ and $J_1(a)=m(a,a^+)$ (the images under $m(a,x)$ of $a^+$ and $a^-$). By our assumption on $I_0$, these two sets are disjoint. Thus, either $J_0(a)<J_1(a)$ or $J_0(a)>J_1(a)$. For concreteness, assume the former. 
    This implies that there is an interval $I_1\ni a$ such that for all $x,z\in I_1$, if $x<a<z$ then
    $m(x,a)<m(a,z)$. By INP, we may assume that $a$ is generic over the parameters defining $I_1$. Hence, $a$ is generic in \[\{y\in I_1: \forall x,z\in I_1(x<y<z\to m(x,y)<m(y,z)\}.\]

    By weak o-minimality, there is some open interval $I_2$, $a\in I_2\subseteq I_1$, such that for all $x<y<z$ in $I_2$, $m(x,y)<m(y,z)$, so $f$ is convex there.
\end{proof}

\subsection{Generic nv-differentiability}

\begin{definition}
    Assume that  $f: K\to K$ is a definable function and $a\in \dom(f)$. Then $f$ is \emph{nv-differentiable} at $a$ if \[\lim_{h\to 0}\frac{f(h+a)-f(a)}{h}\] exists in $\widebar K_{nv}$. 
\end{definition}

We will show that every definable unary function in $\CK$ is generically nv-differentiable. To do this we will first show that a definable function whose lateral derivatives are locally constant (as functions into  $\widebar K$) is generically locally affine. By Theorem \ref{T:func into val is lc}, this forces the lateral derivatives to be nonvaluational, and nv-differentiability then follows by standard techniques.

We define the lateral derivatives to be 
$f'_+(a)=\lim_{x\to a^+} \!m(a,x)$ and $f_-'(a)=\lim_{x\to a^-}\!m(a,x)$, both in $\widebar K$.

\begin{lemma}\label{L:f' constant is locally affine}
    Let $I$ be an open interval and $f:I\to K$ a definable function. 

    \begin{enumerate}
        \item If $f'_+(x)\equiv c\in \widebar K$  on $I$ then $c\in K$ and away from finitely many points, $f$ is locally of the form $f(x)=cx+b$, for $b\in K$. The same is true for $f_-'(x)$.
        \item there are at most finitely many points such that $f_+'(x)= \pm \infty$.
    \end{enumerate}
    \end{lemma}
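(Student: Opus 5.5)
We will first treat part (1) and then derive part (2) from it together with Proposition \ref{Johnson convexity}.

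\textbf{Part (1).} Assume $f'_+\equiv c$ on $I$. We reduce to the case where $f$ is nowhere locally affine. If $f$ is locally affine on some subinterval, then there its slope is $f'_+$, so $c\in K$. The set of points where $f$ is locally affine is definable, and by weak o-minimality it is a finite union of convex sets. On each such convex set, away from finitely many boundary points, $f$ is locally of the form $cx+b$. So it suffices to show that the complementary set, where $f$ is nowhere locally affine, has empty interior.

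Suppose instead that $f$ is nowhere locally affine on an open interval $J$. By Proposition \ref{Johnson convexity}, at a generic $a\in J$ the function $f$ is locally convex or concave, say convex on $J_0\ni a$. Take $x<y$ in $J_0$. Convexity gives $m(x,t)<m(x,y)$ for $x<t<y$, and $m(x,t)$ is increasing in $t$. Hence $f'_+(x)=\inf_{t>x}m(x,t)\le m(x,y)$. Symmetrically, $m(x,y)<m(y,t)$ for $t>y$, so $m(x,y)\le f'_+(y)$.

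Now use strict convexity on a third point. Pick $x<y<z$ in $J_0$. Then
\[
f'_+(x)\le m(x,y)<m(y,z)\le f'_+(z).
\]
So $f'_+(x)<f'_+(z)$ as cuts, because the element $m(x,y)\in K$ separates them. This contradicts $f'_+$ being constant. Hence $f$ is locally affine away from finitely many points, and in particular $c\in K$. The argument for $f'_-$ is symmetric.

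\textbf{Part (2).} Let $X=\{x\in I: f'_+(x)=+\infty\}$; this set is definable. Suppose $X$ is infinite. Then by weak o-minimality it contains an open interval $J$, and $f'_+\equiv+\infty$ on $J$. By part (1), $c=+\infty$ would have to lie in $K$, which is impossible. Strictly speaking, part (1) is stated for $c\in\widebar K$, and $\pm\infty$ may not count as elements of $\widebar K$. So I would redo the argument directly rather than cite it.

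First, $f$ cannot be locally affine anywhere on $J$, since there $f'_+$ would be finite. So $J$ consists of points where $f$ is nowhere locally affine. Then Proposition \ref{Johnson convexity} makes $f$ convex or concave near a generic point of $J$. In the convex case, the inequality $f'_+(x)\le m(x,y)<\infty$ from part (1) contradicts $f'_+(x)=+\infty$. In the concave case, the mirrored inequality gives $f'_+(y)\le m(x,y)<\infty$ for $x<y$, again a contradiction. The case $-\infty$ is symmetric.

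\textbf{Main obstacle.} The delicate step is justifying the use of Proposition \ref{Johnson convexity} on the non-affine part. That proposition requires the function to be nowhere locally affine on its whole domain. I would restrict $f$ to a subinterval of the definable set of non-locally-affine points and verify that it remains nowhere locally affine there, which holds by the definition of that set. The rest of the argument reduces to elementary inequalities between divided differences.
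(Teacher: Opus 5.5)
Your proof is correct and follows essentially the same route as the paper: isolate where $f$ is nowhere locally affine, use Proposition \ref{Johnson convexity} to get convexity (or concavity) near a generic point, and then the divided-difference chain $f'_+(x)\le m(x,y)<m(y,z)\le f'_+(z)$ rules out a constant $f'_+$, with (2) reduced to (1). Your explicit split into the locally affine set and its complement is a bit more careful than the paper's write-up, and the detour in (2) is unnecessary: the paper's definable cuts include the empty cut and all of $K$, so $\pm\infty\in\widebar K$ and (1) applies directly.
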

\begin{proof}
Assume that $f_+'(x)\equiv c\in \widebar K$  on $I$ (the proof for $f_-'(x)$ is the same).
It is sufficient to show that there is a subinterval $J\subseteq I$ on which $f$ is $K$-affine with slope $c$.

If $f$ is locally constant, then $c=0$ and there is nothing to show; so assume that $f$ is locally strictly monotone. By Proposition \ref{Johnson convexity}, we may assume that $f$ is convex: For all $x<y<z$: $m(x,y)<m(y,z)$. It follows that for all $x<y<z$: 
\[
    m(x,y)< m(x,z)<m(y,z).
\]

This implies that $f'_+(x)\le m(x,z)$ and  $m(x,z)\leq f'_+(y)$. We conclude that for all $x,z$ we have $c\le m(x,z)\le c$, i.e., $m(x,z)$ is constant, for $x<z$ in $I$. But $m(x,z)\in K$, so $c\in K$, and $f$ is $K$-affine, with slope $c$.

(1) follows from the above. As for (2), if, for example, $f_+'(x)=\infty$ on infinitely many points then by weak o-minimality it is constant and equal to $\infty$ on an interval, contradicting (1).
\end{proof}

We say that a definable function $f: K\to K$ is \emph{piecewise $\mathcal P$} for some property $\mathcal P$ of (definable) functions, if there is a definable partition $I_1, \dots, I_k$ of $K$, such that for $1\le j\le k$ the restriction $f|I_j$ is $\mathcal P$. Before our main theorem we can conclude in those terms:: 

\begin{corollary}\label{C:piecewise}
    Let $f: K\to K$ be a definable function. If $f'(x)$ is locally constant then $f$ is piecewise $K$-affine. 
\end{corollary}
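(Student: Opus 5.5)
The plan is to reduce to Lemma \ref{L:f' constant is locally affine} and then glue finitely many affine pieces using weak o-minimality. The hypothesis will be read as: at all but finitely many points $f'(x)$ exists in $\widebar K$ (in particular $f'_+(x)=f'_-(x)$), and the function $x\mapsto f'(x)$, viewed as a definable map into $\widebar K$, is locally constant there. The target is a finite definable partition of $K$ into points and open intervals such that on each interval $f$ is $K$-affine.

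First, let $F_0$ be the finite set where $f'$ fails to exist or fails to be locally constant. Let $B\subseteq K$ be the definable set of points $x$ at which $f$ is locally $K$-affine, i.e. some neighbourhood of $x$ has $f(y)=cy+b$ on it with $c,b\in K$. By weak o-minimality, $B$ is a finite union of convex sets. Its complement $K\setminus B$ is also a finite union of convex sets, so it suffices to show that $K\setminus B$ is finite.

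Suppose instead that $K\setminus B$ contains an open interval $J$. Shrink $J$ to avoid $F_0$. Then $f'_+$ is locally constant on $J$; this is a definable locally constant map into $\widebar K$. Pick any $x_0\in J$ and a small interval $J_0\ni x_0$ on which $f'_+$ is constant, say equal to $c$. Lemma \ref{L:f' constant is locally affine}(1) applied to $f\rest J_0$ gives $c\in K$, and shows that away from finitely many points of $J_0$, $f$ is locally $K$-affine. So $J_0$ meets $B$ in a cofinite set, contradicting $J_0\subseteq J\subseteq K\setminus B$. Hence $K\setminus B$ is finite.

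Finally, on each convex component $C$ of $B$, which we may take to be an open interval after removing finitely many points, $f$ is locally $K$-affine. The local slope $f'$ is then locally constant on $C$ with values in $K$, and the local intercept $x\mapsto f(x)-f'(x)x$ is also locally constant on $C$. We need these to be constant on $C$, not merely locally so. This is the step needing care: a locally constant definable function $K\to K$ need not be constant on a convex set in general weakly o-minimal structures. Here, by Fact \ref{F:exchange}, each of the two definable locally constant maps has finite image. Refining the partition by their finitely many (convex, by weak o-minimality) fibers, we get finitely many intervals on which both slope and intercept are constant, so $f$ is $K$-affine on each. Together with the finitely many exceptional points, this gives the required partition.

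I expect the main obstacle to be exactly this last gluing step, and it is resolved by exchange (Fact \ref{F:exchange}); alternatively, Theorem \ref{T:no lc into nv v2} applies since the values lie in $K\subseteq\widebar K_{nv}$.
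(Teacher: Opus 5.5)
Your proposal is correct and follows the paper's proof. Lemma \ref{L:f' constant is locally affine} makes $f$ locally $K$-affine away from finitely many points, and exchange (Fact \ref{F:exchange}) gives the locally constant slope and intercept functions finite image, which yields the partition; your argument that the set of non-locally-affine points is finite simply spells out the first step, which the paper leaves implicit.
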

\begin{proof} By Lemma \ref{L:f' constant is locally affine}, after removing finitely many points from $\dom(f)$, we may assume that 
$f$ is locally $K$-affine. Namely, there are locally constant functions $b:K\to K$ and $c:K\to K$ such that for every $x\in \dom(f)$, $f(x)=c(x)\cdot  x+b(x)$. By exchange, the functions $b$ and $c$ have finite image, so the result follows.
\end{proof}

We are now ready to prove our main theorem:
\begin{theorem}\label{T:n=1 gen-dif}
    Let $f: K\to K$ be a definable partial function. Then $f$ is generically nv-differentiable. 
\end{theorem}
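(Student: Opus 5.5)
Work locally around a generic point $a\in\dom(f)$ and show that both lateral derivatives lie in $\widebar K_{nv}$ and coincide. By generic continuity and the monotonicity theorem (Corollary \ref{C:montonicity}), after removing finitely many points we may assume $f$ is continuous on an open interval. There are two cases.

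\emph{Locally affine case.} If $f$ is locally affine somewhere, the derivative there is in $K$. By weak o-minimality the set of points where $f$ is locally affine is a finite union of convex sets. So we may restrict to an interval on which $f$ is locally affine (differentiable there) or to one on which $f$ is nowhere locally affine.

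\emph{Nowhere locally affine case.} By Proposition \ref{Johnson convexity}, after shrinking around a generic point, $f$ is convex (or concave) on an interval $I$. For convex $f$ and $x<y<z$ in $I$ we have $m(x,y)<m(x,z)<m(y,z)$. Hence $f'_-(y)\le f'_+(y)$, both in $\widebar K$, and $f'_+(x)\le m(x,z)\le f'_-(z)$ for $x<z$. Thus $f'_+$ and $f'_-$ are weakly increasing functions $I\to\widebar K$ with $f'_-(x)\le f'_+(x)\le f'_-(z)$ for $x<z$. By Lemma \ref{L:f' constant is locally affine}(2), finitely many infinite values can be discarded.

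\emph{Lateral derivatives are nonvaluational.} Consider $g=f'_+:I\to\widebar K$. The valuational and nonvaluational points form definable sets, so generically we are in one of two cases. If $g$ takes valuational values on an interval, Theorem \ref{T:func into val is lc} makes $g$ locally constant. Lemma \ref{L:f' constant is locally affine}(1) then makes $f$ locally affine, contradicting the case assumption. Hence generically $f'_+(a)\in\widebar K_{nv}$, and likewise $f'_-(a)\in\widebar K_{nv}$.

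\emph{The lateral derivatives agree.} The main obstacle is showing $f'_-(a)=f'_+(a)$ generically. The weakly increasing $g$ is, generically, either locally constant or locally strictly increasing. If it is locally constant, Lemma \ref{L:f' constant is locally affine} again gives a contradiction, so $g$ is locally strictly increasing. The sandwich $f'_+(x)\le f'_-(z)\le f'_+(z)$ for $x<z$ gives $\lim_{x\to a^-}f'_+(x)\le f'_-(a)\le f'_+(a)$. It therefore suffices to show that $g$ is continuous at a generic point as a map into $\widebar K$; this is where I expect the difficulty.

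\emph{Continuity of $g$.} The image of $g$ consists of nonvaluational cuts, so by Lemma \ref{L: images are WOM} and Corollary \ref{C:nv are dense}, after removing finitely many points the sort $\Im(g)$ is densely ordered and its order topology agrees with the $\widebar K$ topology. Suppose $g$ had a jump at $a$, i.e.\ $\sup_{x<a}g(x)<g(a)$. Then there are points $c\in K$ with $\sup_{x<a}g(x)<c<g(a)$. Since $g$ is strictly increasing, the intervals of $K$ lying in such gaps at distinct points are disjoint. Assigning to each such $c$ the corresponding jump point $a$ gives a definable locally constant function from a subset of $K$ into $K$. Such a function can have only finitely many values by exchange (Fact \ref{F:exchange}), so there are only finitely many jumps. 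This is exactly the mechanism of Lemma \ref{L: gaps in image}, applied on both sides.

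Hence at generic $a$ we get $f'_-(a)=f'_+(a)\in\widebar K_{nv}$. In particular the two-sided limit of $m(a,a+h)$ as $h\to0$ exists in $\widebar K_{nv}$, which is nv-differentiability at $a$.
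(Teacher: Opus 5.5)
Your proof is correct. It handles the nonvaluational step exactly as the paper does, via Theorem~\ref{T:func into val is lc} and Lemma~\ref{L:f' constant is locally affine}. For the key step, equality of the lateral derivatives, you take a genuinely different route.

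\textbf{The paper's route.} It does not use convexity at this point. Assuming $f'_-(a)<f'_+(a)$, it picks $d\in K$ strictly between them. Then $x\mapsto f(x)-dx$ would have lateral derivatives of opposite signs at $a$, which is incompatible with local monotonicity at a point generic over $d$. Hence $a\in\dcl(d)$ for every $d$ in a nonempty open interval of $K$, contradicting exchange.

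\textbf{Your route.} You pass through Proposition~\ref{Johnson convexity} to obtain, for convex $f$, the sandwich
\[
\lim_{x\to a^-}f'_+(x)\le f'_-(a)\le f'_+(a).
\]
This reduces the problem to left-continuity of the weakly increasing map $f'_+$. You then rule out all but finitely many jumps by the exchange argument of Lemma~\ref{L: gaps in image}. This is the classical proof that a convex function is differentiable off a countable set, with ``countable'' improved to ``finite'' by exchange.

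\textbf{What each buys.} The paper's argument is shorter, purely generic, and needs no case split on local affineness. Yours gives slightly more explicit information: on each interval of convexity, the only possible points of non-differentiability are the finitely many jump points of $f'_+$, and the derivative is monotone there. This is close to the setting of the appendix.

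\textbf{Minor remarks.}
\begin{enumerate}
\item You do not need strict monotonicity of $f'_+$, nor the density/topology statement from Corollary~\ref{C:nv are dense}. Weak monotonicity already makes the gaps at distinct jump points disjoint.
\item Choose the interval of convexity via INP, so that $a$ remains generic over its parameters. This guarantees that the finitely many exceptional points (jump points, valuational values) avoid $a$.
\end{enumerate}
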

\begin{proof}
    Assume everything is definable over $\emptyset$.
    Fix some generic $a\in \dom(f)$. We have to show that $f$ is nv-differentiable at $a$. If $f$ is locally constant at $a$, there is nothing to prove, so we assume that is not the case; i.e. that it is strictly monotone around $a$. 

    The limit $f_+'(a)$ (and $f_-'(a)$) lies in $\widebar K$; it cannot be $\pm\infty$ by Lemma \ref{L:f' constant is locally affine}(2) and genericity of $a$.
    
    Next, we note that  $f'_+(a), f'_-(a)\in \widebar K_{nv}$. Indeed, otherwise, the function $x\mapsto f'_+(x)$ maps some interval around $a$ to a valuational sort. By Theorem \ref{T:func into val is lc} it is a locally constant function, and by Lemma \ref{L:f' constant is locally affine} $f$ is locally affine at $a$, implying that $f'(a)\in K$, a contradiction. 
    
    So both lateral derivatives are nonvaluational (possibly rational). If $f'_+(a)=f'_-(a)$, we are done. Towards a contradiction and without loss of generality, assume that $f'_-(a)<f'_+(a)$. Note that for no $f'_-(a)<d<f'_+(a)$ can we get $\dim(a/d)=1$. Indeed, if such a $d$ existed we would get that for the function $g(x)=f(x)-dx$, $g_+'(a)>0$ and $g_-'(a)<0$.  But, since $a$ is generic over the parameters defining $g$, the function $g$ must be continuous and either constant, strictly increasing or strictly decreasing around $a$. Neither option is compatible with both  $g_+'(a)>0$ and $g_-'(a)<0$, so we have reached a contradiction. But then  for all $f'_-(a)<d<f'_+(a)$ we have $a\in \dcl(d)$, which, since $\CK$ satisfies exchange, is absurd. 
\end{proof}

It is natural to ask whether, analogously, in weakly o-minimal fields the local monotonicity theorem can be strengthened as follows:
\begin{question}
    Let $K$ be a weakly o-minimal field, $f: K\to K$ a definable function. Is $f$ piecewise monotone? Note that a positive answer (applied to $f'$) would imply, moreover,  that $f$ is piecewise convex or concave. 
\end{question}

We can strengthen Theorem \ref{T:n=1 gen-dif} showing that, in fact, definable unary functions are \textit{strictly} nv-differentiable. 
For this we need the  following  standard lemma: 
\begin{lemma}
    Let $f:K\times K\to \widebar K $ be such that $f(a,h)$ is nonvaluational for all $(a,h)$. Assume that $a\in K$ is generic and $f(a,0)=\lim\limits_{h\to 0} f(a,h)$. Then $f$ is continuous at $(a,0)$, as a function of two variables. 
\end{lemma}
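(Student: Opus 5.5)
The plan is a uniformization argument. For $x$ near $a$, define a modulus of continuity in the variable $h$ for a fixed tolerance $\epsilon$. Use genericity of $a$, via weak monotonicity, to bound this modulus below uniformly near $a$. Combine this with continuity of $x\mapsto f(x,0)$ at $a$. Throughout, the values $f(x,h)$ are nonvaluational cuts, so they live in the real closed field $\widebar K_{nv}$ (Fact \ref{F: M*}). Hence expressions like $|f(x,h)-f(a,0)|<\epsilon$ make sense and the triangle inequality is available. Assume everything is $\emptyset$-definable.

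\emph{Step 1: shrinking $\epsilon$.} Given any $\epsilon>0$ in $K$, I first replace it by some $0<\epsilon'\le\epsilon$ with $\dim(a/\epsilon')=1$. To do this, take many realizations of $\tp(a/\emptyset)$ and a common positive lower bound by saturation, then move back by an automorphism, as in the proof of Lemma \ref{L: Johnson A7.2}. So assume $a$ is generic over $\epsilon$.

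\emph{Step 2: the hypothesis holds near $a$.} The statement ``$f(x,0)=\lim_{h\to 0}f(x,h)$'' is a $\emptyset$-definable property of $x$, since both lateral limits exist in $\widebar K$. It holds at the generic point $a$, so it holds on an open interval $I\ni a$. Similarly, $g(x):=f(x,0)$ is a definable function into a sort of nonvaluational cuts. By the monotonicity theorem (Fact \ref{F:monotonicity}, via Lemma \ref{L: images are WOM}), $g$ is locally constant or strictly monotone at the generic $a$. Lemma \ref{L: gaps in image} and Corollary \ref{C:nv are dense} then exclude a jump at a generic point. So $g$ is continuous at $a$ in the $\widebar K$-topology, and I shrink $I$ so that $|g(x)-g(a)|<\epsilon$ on $I$.

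\emph{Step 3: the modulus and its uniform bound.} This is the main step. For $x\in I$ define
\[
\delta(x)=\sup\{\delta\in K: \delta>0,\ \forall h\,(|h|<\delta\to |f(x,h)-f(x,0)|<\epsilon)\}\in\widebar K .
\]
By Step 2, $\delta(x)>0$ for every $x\in I$. The function $\delta$ is $\epsilon$-definable and $a$ is generic over $\epsilon$. By Corollary \ref{C:montonicity}, $\delta$ is therefore locally constant or strictly monotone on an interval $(a-\eta,a+\eta)$. In each case $\delta$ is bounded below on a smaller neighbourhood by a positive cut. For instance, if $\delta$ is increasing, then $\delta(x)\ge\delta(a-\eta/2)>0$ for $x\in(a-\eta/2,a+\eta)$. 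Choose $\delta_0\in K$ with $0<\delta_0$ below that bound. The danger to watch is that this bound might only be a positive cut; but then it still dominates some positive element of $K$ by density of $K$, and that element serves as $\delta_0$. If monotonicity were unavailable here, the fallback is the saturation argument of Lemma \ref{L: Johnson A7.2}, applied with $\varphi(x,\delta)$ being the formula defining $\delta(x)>\delta$.

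\emph{Step 4: conclusion.} For $|x-a|<\min(\eta/2,\delta_0)$ and $|h|<\delta_0$, the triangle inequality in $\widebar K_{nv}$ gives
\[
|f(x,h)-f(a,0)|\le|f(x,h)-f(x,0)|+|f(x,0)-f(a,0)|<2\epsilon .
\]
Since $\epsilon$ was arbitrary, $f$ is continuous at $(a,0)$.
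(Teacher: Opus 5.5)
Your argument is correct, but it uniformizes in $x$ by a different mechanism than the paper. The paper fixes $e$ with $\dim(a/e)=1$ and takes a witness $d$ for the limit at the single point $a$. It then uses INP again to arrange $\dim(a/d,e)=1$. The $\{d,e\}$-definable set of $x$ with $|f(x,0)-f(x,y)|<e$ for all $|y|<d$ then contains a neighbourhood of $a$, simply because it contains a generic point.

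You instead encode the uniformity in the $\epsilon$-definable modulus $\delta:K\to\widebar K$ and bound it below near $a$ by the monotonicity theorem (Corollary \ref{C:montonicity}). This avoids the second change of parameters. The price is that you need the hypothesis, hence $\delta(x)>0$, on a whole neighbourhood of $a$; your Step 2 correctly obtains this from genericity.

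The more substantive difference is that you prove continuity of $x\mapsto f(x,0)$ at $a$. The final triangle inequality requires this, and the paper's proof uses it without comment when it concludes that $f$ is continuous at $(a,0)$. Two small points about your Step 2:
\begin{enumerate}
\item Corollary \ref{C:nv are dense} by itself gives both one-sided limits; Lemma \ref{L: gaps in image} only handles the left one.
\item To know that $g(a)$ is not one of the finitely many exceptional values, take the interval of monotonicity over parameters over which $a$ is still generic (INP). Alternatively, argue that the $\emptyset$-definable set of discontinuity points of $g$ has empty interior.
\end{enumerate}

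Finally, your Step 1 sketch is garbled as written: one should take realizations of $\tp(a\epsilon)$ with distinct first coordinates and a common positive lower bound for the second ones. The claim itself is fine, and exchange gives it at once: any $\epsilon'\in(0,\epsilon)\setminus\acl(a)$ satisfies $\dim(a/\epsilon')=1$.
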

\begin{proof}
    Assume $f$ is definable over $\emptyset$. Fix $a\in K$ generic.  Note that, since $f(x,h)$ is nonvaluational the expression $|f(a,h)-f(a,y)|$ makes sense
    (and convergence of $f(a,h)$ to $f(a,0)$ can be interpreted using the standard $\epsilon, \delta$-definition). 
    
    Let $e>0$.  By INP we may assume that $\dim(a/e)=1$ and since $f(x,0)=\lim\limits_{h\to 0} f(x,h)$, there is $d>0$ such that $|f(a,h)-f(a,y)|<e$ for all $|y|<d$.  Again using INP,  we may assume that $\dim(a/d,e)=1$. So for all $x$ in a small enough neighborhood of $a$ we get $|f(x,h)-f(x,y)|<e$ for all $|y|<d$. Thus, $f$ is continuous at $(a,0)$. 
\end{proof}

\begin{lemma}\label{L: strict differentiability}
    If $f: K\to K$ is definable then it is generically strictly nv-differentiable, i.e., for generic $a\in K$ the limit \[
    \lim_{(x,y)\to (a,a)}\frac{f(x)-f(y)}{x-y}
    \] exists in $\widebar K_{nv}$. 
\end{lemma}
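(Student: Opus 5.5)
We will derive the statement from Theorem \ref{T:n=1 gen-dif} together with the preceding continuity lemma, applied to the two-variable ``divided difference'' function. Assume $f$ is $\emptyset$-definable and fix $a$ generic in $\dom(f)$. By Theorem \ref{T:n=1 gen-dif}, and since the set of points where $f$ fails to be nv-differentiable is finite (hence contained in $\acl(\emptyset)$), we may assume that $f$ is nv-differentiable on a whole open interval $I\ni a$. We may also assume $f$ is continuous on $I$. Define $F:I\times K\to \widebar K$ by
\[
F(x,h)=m_f(x,x+h)\ \ (h\neq 0,\ x+h\in I),\qquad F(x,0)=f'(x).
\]
Then $F(x,0)=\lim_{h\to 0}F(x,h)$ for every $x\in I$, by nv-differentiability. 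Note that $F(x,h)\in K$ for $h\neq 0$, so these values are nonvaluational. Moreover $F(x,0)=f'(x)\in\widebar K_{nv}$, so the hypothesis of the lemma holds, after restricting to a box around $(a,0)$ and extending $F$ arbitrarily (for example by $0$) outside.

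Next we apply the lemma at the generic point $a$. It gives that $F$ is continuous at $(a,0)$ as a function of two variables. Concretely, for every $e>0$ there are $d>0$ and a neighborhood $U\ni a$ such that $|F(x,h)-f'(a)|<e$ for all $x\in U$ and $|h|<d$. We then translate this back. Given $(x,y)\to(a,a)$ with $x\neq y$, set $h=y-x$. Then $m_f(x,y)=F(x,h)$, and both $x\to a$ and $h\to 0$. Hence $m_f(x,y)\to f'(a)$, and the limit in the statement exists and equals $f'(a)\in\widebar K_{nv}$.

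The main obstacle is checking that the lemma really applies. Two points need care. First, the lemma as stated asks for $F$ to be defined on all of $K\times K$ with nonvaluational values, and for the limit condition at $(x,0)$. Inspecting its proof, the limit condition is only used at points $x$ near $a$ (via INP). So we need it on a neighborhood of $a$, which is why we first secured nv-differentiability on the interval $I$ rather than only at $a$. Second, the meaning of $|F(x,h)-F(x,0)|<e$ must be justified: $F(x,0)$ lies in $\widebar K_{nv}$, which is a field by Fact \ref{F: M*}, so the difference makes sense. Definability of the relevant sets over $a,e,d$ is fine, since $F(x,0)$ ranges in a definable sort of cuts and the comparison with elements of $K$ is definable. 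If the sort issue causes trouble, we would instead rerun the INP argument of that lemma directly for $m_f$: fix $e$, use INP to keep $a$ generic over $e$, find $d$ with $|m_f(a,a+h)-f'(a)|<e$ for $|h|<d$, keep $a$ generic over $d,e$, and transfer the property to all $x$ near $a$.
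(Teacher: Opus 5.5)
Your proposal is correct and is exactly the paper's argument: use Theorem~\ref{T:n=1 gen-dif} to assume $f$ is nv-differentiable near a generic point, then apply the preceding two-variable continuity lemma to $F(x,h)=m_f(x,x+h)$ for $h\neq 0$ and $F(x,0)=f'(x)$. Your extra bookkeeping about domains and about the meaning of $|F(x,h)-F(x,0)|$ is harmless and consistent with how the paper uses that lemma.
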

\begin{proof}
    By Theorem \ref{T:n=1 gen-dif} $f$ is generically nv-differentiable; hence we may assume it is nv-differentiable. Apply the previous lemma to $F(x,h):=\begin{cases}
        m(x,x+h) & h\neq 0 \\
        f'(x) & h=0
    \end{cases}$. 
\end{proof}

\subsection{Multivariable nv-differentiability}

The results of the previous subsection imply that, at generic points, definable multi-variable functions have (continuous) partial nv-derivatives. But in order to show that this implies multivariable nv-differentiability, some technicalities have to be addressed. We will be working, as above, in a slightly saturated weakly o-minimal expansion of a field, $\CK$.  Recall that $\widebar K_{nv}$ is a real closed field, so the following makes sense: 

\begin{definition}
   A  function $f:K^n\to (\widebar K_{nv})^k$  is  called \textit{$\widebar K_{nv}$-linear} if it is the restriction of a $\widebar K_{nv}$-linear map   $L:(\widebar K_{nv})^n\to (\widebar K_{nv})^k$. (Note that such a restriction is always definable, since scalar multiplication of elements in $K$ by elements of $\widebar K_{nv}$ is definable).
\end{definition}


Thus, we naturally have: 

\begin{definition}
    A definable function $f: K^n\to K^k$ is\textit{ nv-differentiable at a point $x_0$} if there exists a $\widebar K_{nv}$-linear map $L:K^n\to  (\widebar K_{nv})^k$  such that \[\lim\limits_{h\to 0}\frac{||f(x_0+h)-f(x_0)-L(h)||}{||h||}=0.\]
\end{definition}

 Note that if $f:K^n\to K$ is differentiable at $x_0$ then
the $\widebar K_{nv}$-linear map $L$ above consists of the partial derivatives of $f$ at $x_0$. We denote it by $D_{x_0}f=(\frac{\partial f}{x_1}(x_0),\ldots, \frac{\partial f}{x_n}(x_0))$.


\begin{theorem} \label{T:gen-dif}
Let $V\sub K^n$ be an open set and $f:V\to K$ an $A$-definable function. Then $f$ is  nv-differentiable at every generic point of $V$. 
\end{theorem}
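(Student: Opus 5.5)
We follow the classical real-analysis proof that continuous partial derivatives imply differentiability, replacing each analytic step by a genericity argument. Fix a generic $a=(a_1,\dots,a_n)\in V$ over $A$. For each $i$, consider the one-variable function $t\mapsto f(a_1,\dots,a_{i-1},t,a_{i+1},\dots,a_n)$, which is defined over $A$ together with the other coordinates. Since $a_i$ is generic over $A\cup\{a_j:j\neq i\}$, Lemma \ref{L: strict differentiability} applies to this function. It shows that the $i$-th partial derivative $\partial_i f$ exists in $\widebar K_{nv}$ at every generic point. Moreover, the convergence of the divided difference is \emph{strict} in the $i$-th variable. Let $L(h)=\sum_i \partial_i f(a)h_i$. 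We must show that $|f(a+h)-f(a)-L(h)|/\|h\|\to 0$.

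First we need uniformity in the remaining variables. Consider the map $F_i(x,s)=m$ of the $i$-th coordinate divided difference at $x$ with increment $s$, extended by $\partial_i f(x)$ at $s=0$. This is a definable family of functions on an open subset of $K^n\times K$ with values in $\widebar K_{nv}$; at least on a generic neighbourhood, the values are nonvaluational or in $K$. We would prove an $n$-variable version of the lemma preceding Lemma \ref{L: strict differentiability}. The statement is that for generic $a$, the map $(x,s)\mapsto F_i(x,s)$ is continuous at $(a,0)$ as a function of all $n+1$ variables.

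The proof copies the one-variable one. Given $e>0$, use INP (Fact \ref{F:inp}) to keep $a$ generic over $e$. Strict differentiability at $a$ in direction $i$ gives $d>0$ with $|F_i(y,s)-\partial_i f(a)|<e$ for $y$ on the $i$-th coordinate line near $a$ and $|s|<d$. This only controls the line, so we must also control $\partial_i f(x)$ for $x$ off the line. We first show that $x\mapsto\partial_i f(x)$ is continuous at generic points, in the generic-continuity sense for definable maps into the sort of cuts. To do this, reduce to one-variable restrictions and use the fact that a definable map into $\widebar K$ is continuous at generic points. We would apply Fact \ref{F:inp} again to pick $d$ with $a$ generic over $d,e$. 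The set of $x$ satisfying ``$|F_i(x,s)-\partial_i f(x)|<e$ for all $|s|<d$'' is then $Ade$-definable and contains $a$. Since $a$ is generic, a box around $a$ is contained in it. Combined with continuity of $\partial_i f$ at $a$, this yields the joint continuity. This uniformity step is the main obstacle. It requires the multivariable version of INP and the fact that an $A$-definable set containing an $A$-generic point of $K^n$ contains a neighbourhood of it, which is standard for topological dimension.

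With uniformity in hand, the telescoping argument is routine. Write
\[
f(a+h)-f(a)=\sum_{i=1}^n\bigl(f(a+h^{(i)})-f(a+h^{(i-1)})\bigr),
\]
where $h^{(i)}=(h_1,\dots,h_i,0,\dots,0)$. The $i$-th term equals $h_i\,F_i(a+h^{(i-1)},h_i)$, or $0$ if $h_i=0$. By the joint continuity of $F_i$ at $(a,0)$, each $F_i(a+h^{(i-1)},h_i)$ is within $e$ of $\partial_i f(a)$ once $\|h\|$ is small. Hence $|f(a+h)-f(a)-L(h)|\le n e\|h\|$. All arithmetic here takes place in the real closed field $\widebar K_{nv}$, using Fact \ref{F: M*}. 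No mean value theorem is needed, because the divided differences themselves are controlled. Finally, $L$ is $\widebar K_{nv}$-linear by construction, which proves Theorem \ref{T:gen-dif}.
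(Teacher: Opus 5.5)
Your argument is correct in outline, but it takes a different route from the paper.

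\emph{The paper's route.} The paper argues by induction on $n$, splitting $K^n=K\times K^{n-1}$. It uses Theorem~\ref{T:n=1 gen-dif} for the one-variable slices $f^{\bar b}$ and the inductive hypothesis for the $(n-1)$-variable slices $f_a$. INP then makes both first-order estimates hold uniformly on a box around the generic point. After that, the only continuity it needs is that of the \emph{unary} map $x\mapsto D_{\bar b}f_x$ at $a$, with $\bar b$ fixed.

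\emph{Your route, and where it is too quick.} You telescope along all $n$ coordinates at once. Since the base point $a+h^{(i-1)}$ moves in several coordinates, you need each $\partial_i f$ to be continuous at $a$ as a function of several variables. Here your sketch is too quick: ``reduce to one-variable restrictions'' does not work by itself, because separate continuity does not imply joint continuity.

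\emph{The fix.} The gap is repaired by the same INP device you already use for $F_i$:
\begin{itemize}
\item For each $j$, pick $\delta>0$ with $a$ generic over $Ae\delta$ and $|\partial_i f(a+te_j)-\partial_i f(a)|<e$ for all $|t|<\delta$. This is one-variable generic continuity, applicable because $a_j$ is generic over $A$, $e$ and the other coordinates.
\item The $Ae\delta$-definable set of $x$ with $|\partial_i f(x+te_j)-\partial_i f(x)|<e$ for all $|t|<\delta$ then contains a box around $a$.
\item Moving one coordinate at a time inside a small cube around $a$ gives $|\partial_i f(x)-\partial_i f(a)|<ne$.
\end{itemize}

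\emph{What each approach buys.} Once this is in place, your route has a real advantage. Joint continuity of $F_i$ at $(a,0)$ in all $n+1$ variables lets the starting point of the telescoping sum move freely near $a$. So you get strict nv-differentiability at generic points directly, which the paper only indicates as standard after the theorem. The paper's induction, in return, never needs continuity of any derivative in more than one variable.
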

\begin{proof} The proof is by induction on $n$, where the case $n=1$
is Theorem \ref{T:n=1 gen-dif}. Inductively, it is sufficient to prove:

\begin{claim} Assume that $I\times U \sub K\times K^{n-1}$ is open, $f:I\times U\to K$  a definable function. For  every $a\in I$, let $f_a(y)=f(a,y)$, and for every $\bar b\in U$, let $f^{\bar b}(x)=f(x,\bar b)$, and assume that each $f_a$ is nv-differentiable on $V$ and each $f^{\bar b}$ is nv-differentiable on $I$. Then, if $x_0\in I\times U$ is generic then  $f$ is nv-differentiable at $x_0$.  More precisely, 

 \[\lim\limits_{h\to 0}\frac{||f(x_0+h)-f(x_0)-D_{x_0}f(h)||}{||h||}=0.\]
\end{claim}
\begin{claimproof}

We may assume that all data is definable over $\emptyset$ and we fix a generic $(a,\bar b)\in I\times U$.  Notice that  $D_{a,\bar b}f=(D_a f^{\bar b},D_{\bar b}f_a)$ and hence $D_{a,\bar b}f(r,\bar s)=D_a f^{\bar b}(r)+D_{\bar b}f_a(\bar s)$

Fix $\epsilon>0$. By INP, we may assume that $\dim(a,\bar b/\epsilon)=n+1$.

By the differentiability assumption of $f_a$ and $f^{\bar b}$,   there exists $\delta>0$ such that for all $r$ and $s=(s_1,\ldots, s_n)$, with $|r|,|\bar s|<\delta $, 

\begin{equation}\label{eq: diff1}\begin{split} \mbox{ If } |r|,|\bar s|<\delta \mbox{ then } |f(a+r,\bar b)-f(a,\bar b)-D_af^{\bar b}(r)|<|r|\epsilon \\ 
|f(a,\bar b+\bar s)-f(a,\bar b)-D_{\bar b}f_a(\bar s)|<|\bar s|\epsilon.\end{split} \end{equation}
By INP, we may assume that $\dim(a,\bar b/\epsilon, \delta)=n+1$. Hence, by shrinking $I$ and $U$, we may assume that all $(a_1,\bar b_1)\in I\times U$ satisfy the above. Furthermore, by the continuity of the map $x\mapsto D_{\bar b}f_x$ at $a$ we may also assume that for all $x_1,x_2\in I$, we have $||D_{\bar b}f_{x_1}-D_{\bar b}f_x||<\epsilon$
(here the norm is computed as that of a linear map).

Thus, if $r,|s|<\delta$ then 
$|f(a+r,\bar b+\bar s)-f(a,\bar b)-D_{a,\bar b}f (r,\bar s)|$=$$ |(f(a+r,\bar b+\bar s)-f(a+r,\bar b)-D_{\bar b} f_a (\bar s))+(f(a+r,\bar b)-f(a,\bar b)-D_af^b(r))|$$ $$\leq |f(a+r,\bar b+\bar s)-f(a+r,\bar b)-D_{\bar b} f_a (\bar s)|+|r|\epsilon$$ $$\leq |f(a+r,\bar b+\bar s)-f(a+r,\bar b)-D_{\bar b}f_{a+r}(\bar s)|+ |(D_{\bar b}f_{a+r}-D_{\bar b} f_a )(\bar s)|+|r|\epsilon  $$ $$\leq |\bar s|\epsilon+|\bar s|\epsilon+|r|\epsilon\leq (2|\bar s|+|r|)\epsilon\leq 2|(r,\bar s)|\epsilon.$$  The result follows.
\end{claimproof}
 This ends the proof of the theorem.   
\end{proof}


It is now standard to show that under the same assumptions as those of Theorem \ref{T:gen-dif} the function $f$ is strictly differentiable on an open set of small co-dimension (see e.g., the analogous proof of \cite[Theorem 5.15]{JohP-minimal}). 

\section{Definable fields in weakly o-minimal expansions of fields}

\begin{theorem}\label{T:def fields}
    Let $\CK$ be a weakly o-minimal expansion of a field $K$. Let $F$ be a definable field in $\CK$ whose underlying set is a subset of $ (\widebar K_{nv})^n$, for some $n$. Then $F$ is  definably isomorphic to  either $K$ or $K^{alg}=K(\sqrt{-1})$.

    In particular, this holds when $F\sub K^n$, for some $n$.
\end{theorem}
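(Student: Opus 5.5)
The strategy is the standard o-minimal one (Otero--Peterzil--Pillay): use generic nv-differentiability to realize $F$ as a field of $\widebar K_{nv}$-linear maps. First we reduce to $F\sub K^m$. By Lemma \ref{L: F in nv is lsi to K}, applied coordinatewise, together with a dimension argument ($\dcl=\acl$, exchange by Fact \ref{F:exchange}), a generic subset of $F$ is in definable bijection with an open subset of $K^m$, where $m=\dim F$. A definable field is determined up to definable isomorphism by a generic chunk: take the group chunk and field-operation chunk on a large generic subset and reconstruct, as in the Hrushovski--Weil construction carried out in \cite{HaHaPeGps}. Hence we may assume that a neighborhood $V$ of a generic point of $F$ is an open subset of $K^m$ and that the field operations are definable maps on it.

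Next, fix $a\in F$ generic and, for generic $c\in F$ over $a$, consider the map $\mu_c:x\mapsto cx$. By Theorem \ref{T:gen-dif}, together with the strict differentiability remark after it, the maps $(x,y)\mapsto x+y$ and $(x,y)\mapsto xy$, read in local coordinates around generic points, are nv-differentiable at generic points. Using translations, which are definable bijections of $F$, we transport these points to a fixed generic base point, say $1$ or $0$. This gives a map $c\mapsto D\mu_c\in M_m(\widebar K_{nv})$ (derivatives taken at $0$ after translating). The chain rule holds for nv-differentiable maps, as in the standard proof, since $\widebar K_{nv}$ is a field extending $K$ densely. It yields that $c\mapsto D\mu_c$ is a ring homomorphism on a generic chunk and extends to all of $F$, because the derivative of $\mu_c$ at $0$ is determined by linear algebra from multiplicativity. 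Differentiating $x+y$ gives additivity.

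Injectivity is the crucial point: if $D\mu_c=0$ for some $c\neq 0$, then, since $\mu_c$ is a definable bijection, the derivative of $\mu_{c^{-1}}\circ\mu_c=\mathrm{id}$ would vanish, a contradiction. So $F$ embeds into $M_m(\widebar K_{nv})$ as a field $F'$ containing the scalars $K\cdot I$. The image of $K$ is obtained by differentiating the prime-field/additive structure; more precisely, we show that $F'$ contains $K$ via a definable embedding by taking a definable one-dimensional curve in $F$ through $0$ and its tangent direction. Then $F'$ is a commutative field of matrices over the real closed field $\widebar K_{nv}$, so its $\widebar K_{nv}$-span is a finite field extension of $\widebar K_{nv}$, hence is $\widebar K_{nv}$ or $\widebar K_{nv}(\sqrt{-1})$, and thus $m\le 2$.

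Finally, we must descend from $\widebar K_{nv}$ to $K$ definably, and this is the main obstacle. The map $F\to F'$ is only "definable" sortwise, since $\widebar K_{nv}$ is not a single definable object. I would argue that $F$ is a definable real closed field or algebraically closed field (by the above, $F$ embeds in a real closed or algebraically closed field and has dimension $1$ or $2$), and then use Theorem \ref{T:no lc into nv v2} and Fact \ref{F:exchange} to get a definable isomorphism. Concretely, in the one-dimensional case, $F$ is a weakly o-minimal ordered field. The map $F\to F'\cong$ a sort of nonvaluational cuts, composed with Lemma \ref{L: F in nv is lsi to K}, gives a definable partial injection. Its derivative, as a map between fields, is a field homomorphism on a generic chunk, which forces it to be $K$-affine up to a definable field isomorphism (an additive and multiplicative generic map from $K$ to $F$). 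Showing that the resulting embedding is onto $K$, respectively $K(\sqrt{-1})$, uses dimension together with real closedness. The two-dimensional case reduces to this via the fixed field of the conjugation.
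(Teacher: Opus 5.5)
\paragraph{Verdict.} Your overall plan is the paper's: use Jacobians of the maps $\mu_c\colon x\mapsto cx$ and the chain rule to realize $F$ as a subring $F'$ of $M_m(\widebar K_{nv})$. The genuine gap is in how you finish.

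\paragraph{Where the ending fails.} You assert that $F'$ contains $K\cdot I$. The justification you give (a definable curve in $F$ through $0$ and its tangent direction) produces no scalar matrices in $F'$. You then treat a ``descent from $\widebar K_{nv}$ to $K$'' as the main obstacle, and the argument offered there does not work, for three reasons.
\begin{enumerate}
\item Being embedded in a real closed field and having dimension $1$ does not by itself make $F$ real closed.
\item The derivative of the partial injection you get from Lemma~\ref{L: F in nv is lsi to K} is, at each point, just an element of $\widebar K_{nv}$. It is not a field homomorphism, so nothing forces it to be $K$-affine or to yield a field isomorphism.
\item The claim that the $\widebar K_{nv}$-span of $F'$ is a field, and hence that $m\le 2$, is unjustified at that stage. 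The span of a commutative field of matrices can be a product of fields, for instance for a diagonal embedding.
\end{enumerate}

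\paragraph{The missing idea.} It is elementary and makes any descent unnecessary. Since $D\mu_1=I$, $F'$ is a unital subring, so $\mathbb{Q}\cdot I\subseteq F'$. Hence $\{k\in K: kI\in F'\}$ is an infinite definable subfield of $K$. By weak o-minimality it contains an interval, and a subfield containing an interval is all of $K$. Thus $K\cdot I\subseteq F'$. Since $F'$ has finite dp-rank, it is a finite-dimensional $K$-vector space, i.e.\ a finite field extension of the real closed field $K$. So $F'$ is $K$ or $K(\sqrt{-1})$, and a $K$-basis gives the definable isomorphism. This is exactly how the paper concludes.

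\paragraph{Two further points in the first half.} These are only sketched in your proposal.
\begin{itemize}
\item Applying Lemma~\ref{L: F in nv is lsi to K} ``coordinatewise'' does not give a generic chunk of $F$ in definable bijection with an open subset of $K^{\dim F}$. The lemma only yields some infinite subset of a one-dimensional sort injecting into $K$, over extra parameters.
\item ``Transporting to $0$ by translations'' hides the fact that $0$ is not generic. To make $D\mu_c$ at $0$ well defined you need a canonical local structure there.
\end{itemize}
The paper avoids both issues. It needs only local strong internality and runs the argument of \cite[Theorem 4.21]{HaHaPeVF}. There, the type-definable infinitesimal subgroup $\nu$ satisfies $c\nu=\nu$ and carries a $\widebar K_{nv}$-differential structure, and it supplies the Jacobians at $0$.
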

\begin{proof}
    Let $F\sub (\widebar K_{nv})^n$ be a definable field; in particular it is interpretable in $\CK$, is  of finite dp-rank and consists of nonvaluational cuts. By Lemma \ref{L: F in nv is lsi to K}, there exists an infinite definable subset $S\subseteq F$ and a definable injection $S\hookrightarrow K$; in the language of \cite{HaHaPeVF}: $F$ is locally strongly internal to $K$. Also, in the language of \cite{HaHaPeVF}, $K$ is an SW-uniformity (\cite[Definition 3.2]{HaHaPeVF}). The only difference, not allowing us to apply \cite[Theorem 4.21]{HaHaPeVF} directly is that $K$ does not satisfy (Gen-Dif) per-se, i.e. the derivatives take values in $\widebar K_{nv}$ and not $K$. Nevertheless, the proof structure still applies almost verbatim. We highlight the main differences.

    In the above terminology, for a definable field $F$ that is locally strongly internal to an SW-uniformity, the proof of \cite[Theorem 4.21]{HaHaPeVF} produces a type-definable additive subgroup $\nu$ of $(F,+)$ (the so called infinitesimal subgroup). The group $\nu$ is invariant under scalar multiplication in the following sense: For $\widehat \CK\succ \CK$ a sufficiently saturated extension and $a\in F(\CK)$, $a\cdot \nu(\widehat \CK)=\nu(\widehat \CK)$. Furthermore, $\nu(\widehat \CK)$ has a structure of a $\widebar K_{nv}$-differential group  (see \cite[Proposition 4.19]{HaHaPeVF}), 
    specifically, for $a\in F(\CK)$, the Jacobian matrix of the map $a\mapsto a\cdot x$, at $0$, is a matrix with coefficients in $\widebar K_{nv}$, i.e. an element of $M_n(\widebar K_{nv})$ for some $n$. The chain-rule (which is applicable here as well) implies that there is an embedding of $F$, as a subring, into $M_n(\widebar K_{nv})$. One may definably embed $K$ into $M_n(\widebar K_{nv})$ as well, as scalar matrices. Inside $M_n(\widebar K_{nv})$, $K\cap F\subseteq K$ is a definable subfield, but by dimension considerations, $K$ has no proper subfields hence $K\subseteq F$. Since $K$ is real closed and $F$ has finite dp-rank, $F$ must be either $K$ or $K^{alg}$. 
\end{proof}

\appendix
\section{The mean value theorem}
O-minimal structures satisfy Rolle's Theorem and consequently Lagrange's mean value theorem for definable functions. It is easy to see that in weakly o-minimal expansions of fields definable (nv-) differentiable functions may fail to satisfy Rolle's theorem. In the present section we show, however, that a local version of the mean value theorem remains true.  

Given a convex set $I\sub K$ and a convex function $f:I \to K$, and $d\in K$, we let $$L_d=(m_f)^{-1}(d)=\{(x,y)\in K^2: m_f(x,y)=d\}\sub K^2.$$ Notice that the convexity of $f$ implies that $\dim(L_d)\leq 1$. Below we write $m(x,y)$ for $m_f(x,y)$.

In real analysis Darboux's theorem shows that if $f$ is a differentiable function on an open interval $I$ then $f'$ satisfies the mean value theorem on $I$. In weakly o-minimal theories Darboux's theorem fails, so its conclusion has to be added as an assumption to the following lemma:   

\begin{lemma}\label{L:curves and Diag v2}
Assume that $I\sub K$ is a convex set and $f:I \to K$ a convex nv-differentiable function. Assume further that $f'(I)$ is dense in $\conv(f'(I))$. Then, for every $d\in m(I\times I)$ outside a finite set, there exists a unique $c\in \widebar K$ such that $(c,c)\in \cl_{\widebar K}(L_d)$. Furthermore, $c$ is nonvaluational, and for $(x,y)\in L_d$ close enough to $(c,c)$, we have $x<c<y$.
    
\end{lemma}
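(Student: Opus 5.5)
Since $f$ is convex, for $x<y$ in $I$ we have $f'(x)\le m(x,y)\le f'(y)$. Moreover $f'$ is weakly increasing, and by convexity it is strictly increasing wherever $f$ is not locally affine. The plan is to describe $L_d$ as the graph of a definable function near the diagonal and then locate $c$ as a cut determined by $f'$.

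\emph{Step 1 (where $c$ sits).} Fix $d\in m(I\times I)$. Set $c:=\sup\{x\in I: f'(x)<d\}$. This is a definable cut in $\widebar K$, definable from $d$, since $f'$ is definable into a sort of nv-cuts. Suppose $(x,y)\in L_d$ with $x<y$. Then $f'(x)\le d\le f'(y)$, and strict convexity makes both inequalities strict: $f'(x)<d<f'(y)$. Hence $x<c<y$, apart from the degenerate case where $c$ is rational and equals an endpoint; this case occurs for only finitely many $d$, by weak o-minimality applied to the definable family. So any point of $\cl(L_d)$ on the diagonal has the form $(c',c')$ with $c'$ squeezed between the coordinates of points of $L_d$. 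This already gives uniqueness, \emph{provided} points of $L_d$ actually accumulate at the diagonal: every such limit must equal $c$. It also gives the last assertion, $x<c<y$.

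\emph{Step 2 (existence: $L_d$ approaches the diagonal).} This is the main obstacle, and it is where the density hypothesis on $f'(I)$ replaces Darboux. For $x<c<y$, the convexity identity $m(x,y)\in (m(x,c'),m(c',y))$, for points $c'$ near $c$, together with nv-differentiability, shows that $m(x,y)$ tends to the value $f'$ ``at $c$'' as $x,y\to c$. The task is to make that value equal to $d$.

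By density of $f'(I)$ in its convex hull, $d=\sup\{f'(x):x<c\}=\inf\{f'(y):y>c\}$. Indeed, a gap would be an interval of $\conv(f'(I))$ missing $f'(I)$, which density forbids. This uses only finitely many exceptions, namely the endpoints of the hull.

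Then I would argue as follows. For $x<c$ close to $c$, the function $y\mapsto m(x,y)$ is increasing and continuous, by generic continuity after discarding finitely many $d$. It ranges from near $f'(x)<d$ to values above $d$ once $y$ is past $c$ with $f'(y)>d$. Weak o-minimality plus continuity then gives $y(x)$ with $m(x,y(x))=d$ and $y(x)\to c$ as $x\to c^-$. The set of $d$ for which this continuity argument fails is definable and, by monotonicity applied uniformly, finite. I expect some care with passing from local continuity of $m(x,\cdot)$ to an intermediate-value conclusion: weak o-minimality gives only convex images, not surjectivity onto intervals of $\widebar K$. I would try to handle this through the strict $\widebar K_{nv}$-differentiability at $c$ from Lemma~\ref{L: strict differentiability}, after checking that $c$ is generic.

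\emph{Step 3 (nonvaluationality).} The map $d\mapsto c(d)$ is definable and strictly increasing on $m(I\times I)$ minus a finite set, because $f'$ is strictly increasing. A strictly increasing definable function cannot be locally constant, so by Theorem~\ref{T:func into val is lc} its values are nonvaluational off a finite set. Discarding these finitely many $d$ completes the proof.
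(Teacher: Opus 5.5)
Your Step 2 is where the lemma's real content lies, and as proposed it does not go through. For fixed $x$, the map $y\mapsto m(x,y)$ is continuous and strictly increasing, but in a weakly o-minimal field this gives no intermediate value property. A continuous strictly increasing definable function can skip a whole interval of values at an irrational cut of its domain. For example, in $\mathbb{R}^{\mathrm{alg}}$ expanded by a predicate for $(-\infty,\pi)$, take $x\mapsto x$ below $\pi$ and $x\mapsto x+1$ above it. So knowing that $m(x,\cdot)$ takes values below and above $d$ tells you nothing about whether it takes the value $d$. Your claim that the bad $d$'s form a finite set ``by monotonicity applied uniformly'' has no argument behind it.

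The fallback via Lemma~\ref{L: strict differentiability} does not rescue this. Your $c$ is in general an irrational cut, not a point of $K$, let alone a generic one. Even at a generic point, strict differentiability only says that secant slopes near $(c,c)$ converge to $d$. It never produces a secant whose slope is exactly $d$, which is again an intermediate-value statement.

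The missing idea is to replace intermediate values by a finiteness count. Let $D_e$ be the set of slopes $m(x,y)$ with $x<y$ and $y-x<e$.
\begin{itemize}
\item By strict convexity, $f'(a)<m(a,b)<f'(b)$, so $D_e\sub m(I\times I)\sub\conv(f'(I))$.
\item By the definition of the derivative, $f'(I)\sub\cl_{\widebar K}(D_e)$. Since $f'(I)$ is dense in its convex hull, $D_e$ is dense in $\conv(f'(I))$.
\item Hence $F_e:=m(I\times I)\setminus D_e$ is a definable subset of $K$ with empty interior, so it is finite.
\item $F_e$ grows as $e$ shrinks and its size is uniformly bounded, so it stabilizes at a finite set $F$.
\end{itemize}
Every $d\in m(I\times I)\setminus F$ then lies in all $D_e$, i.e.\ $L_d$ meets every tube around the diagonal. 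This is exactly where the finite exceptional set in the statement comes from.

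Once you have this, your Step 1 gives uniqueness and $x<c<y$; it is a clean way to locate $c$. Your ``degenerate case'' there is excluded outright by density of $f'(I)$, since $c=x$ or $c=y$ would create an open gap in $f'(I)$. Nonvaluationality then follows at once, because there are $x<c<y$ with $y-x$ arbitrarily small. Your Step 3 via Theorem~\ref{T:func into val is lc} is valid but an unnecessary detour. Note also that the strict monotonicity of $d\mapsto c(d)$ there comes from density of $f'(I)$, not merely from $f'$ being strictly increasing.
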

\begin{proof}
The convexity of $f$, together with the non affiness, implies that for all $a<b\in I$,   $f'(a)<m(a,b)<f'(b)$. Thus, $m(I\times I)$ is contained in $\conv(f'(I))$ and  is dense in it.

For $e>0$ let $D_e=\{m(x,y):x<y\in I,\, y-x<e\}$. As above, $D_e$ is contained in $\conv(f'(I))$ and dense  in it, and thus $\cl_K(D_e)$ consists of a single convex component which equals $\cl_{\widebar K}(f'(I))\cap K$. Also, by definition of derivative, $f'(I)\sub \cl_{\widebar K}(D_e)$. Thus, $$\cl_{\widebar K}(m(I\times I))=\cl_{\widebar K}(f'(I))=\cl_{\widebar K}(D_e),$$ for every $e>0$.

It follows that  there is a finite set $F_e$, such that $m(I\times I)=D_e\cup F_e$, and $F_{e'}\supseteq F_e$, for $e'<e$. By weak o-minimality, for $e$ small enough, $F:=F_e$ remains the same. 

Fix $d\in m(I\times I)\setminus F$, and then $d\in D_e$ for all $e>0$.
the convexity of $f$ implies that $L_d^+:=L_d\cap \{(x,y)\in K^2: y>x\}$ is the graph of a strictly decreasing function  (note that $L_d$ is symmetric with respect to the diagonal of $K$). 

For $e>0$, let $$\Delta_e=\{(x,y)\in K^2:|x-y|<e\},$$ a tube around the diagonal, and then $D_e=m(\Delta_e)$, so $L_d\cap \Delta_e\neq \emptyset$, for all $e>0$. It follows that the frontier in $\widebar K^2$ of $L_d$ belongs to the diagonal, $\Delta_{\widebar K}$,  of $\widebar K$. In fact, if $c=\inf_{\widebar K}(\pi_2(L_d))$ (the projection onto the $y$-coordinate of $L_d$), then $Fr_{\widebar K}(L_d)\cap \Delta_{\widebar K}=(c,c)$.

Since $L_d$ is the graph of a strictly decreasing function, it follows from weak o-minimality, that for sufficiently small $e$, $L_e\cap \Delta_e$ is contained in the ``third quadrant'' with respect to $(c,c)$, namely in the set $\{(x,y)\in K^2: x<c<y\}$. In particular, since we have such $x<c<y$, with $y-x<e$,   for all sufficiently small $e>0$, it follows that $c$ is nonvaluational.
\end{proof}

We aim to prove a version of the mean value theorem. To state it we, first, need : 
\begin{definition}
    A definable function $f:K\to K$ is differentiable at a nonvaluational cut  $c$ if $f$ can be extended continuously to $c$ and $\lim\limits_{x\to c} \frac{f(x)-f(c)}{x-c}$ exists in $\widebar K $. 
\end{definition}

\begin{remark}
    It can be shown that for any definable function $f: K\to K$ there is a partition of $K$ into finitely many convex sets $I_1,\dots, I_k$ such that for all $1\le i\le k$ $f|I_i$ can be extended continuously to a function on the convex hull of $I_i$ in $\widebar K_{nv}$ (and into $\widebar K_{nv}$). So the assumption that $f$ can be extended to $c$ is harmless. In the application below, it will be automatic. Also note that, for a fixed $c$, the function $m_f(x,c):K\to K_{nv}$ is definable and therefore the lateral limits, at least, exist as $x$ tends to $c$. 
\end{remark}


\begin{theorem}\label{T: MVT}
    Let $f: I\to K$ be a $\0$-definable, convex ${nv}$-differentiable function,  with $I\sub K$ convex. 

    Then, there is a $\0$-definable partition $I=\bigcup I_j$ into definable  convex sets and a finite set $D\sub K$,  such that for each $j$, for all $a<b\in I_j$, if $m(a,b)\notin D$ then there exists $c\in \widebar K_{nv}$, $a<c<b$, such that
    $$f'(c)=\frac{f(b)-f(a)}{b-a}.$$
   \end{theorem}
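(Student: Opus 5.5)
The plan is to reduce to Lemma \ref{L:curves and Diag v2} and then read off $f'(c)=d$ from the local geometry of the level set $L_d$ near the diagonal point $(c,c)$.

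\textbf{Step 1: the partition.} Consider the definable function $x\mapsto f'(x)$ into $\widebar K_{nv}$. Since $f$ is convex, $f'$ is weakly increasing. If $f$ is affine on some convex piece, then $m$ is constant there; we put that constant into $D$ and the conclusion is trivial. So assume $f$ is nowhere locally affine, hence strictly convex on the relevant pieces. The image $f'(I)$ lies in a single sort $S$ of nonvaluational cuts, since $f'$ is definable from $f$ over $\emptyset$. By Corollary \ref{C:partition of nv}, $S$ splits into finitely many $\emptyset$-definable pieces $S_i$, each dense in $\conv(S_i)$. Because $f'$ is weakly increasing, the sets $I_i:=(f')^{-1}(S_i)$ are convex, and we refine by the finitely many locally-constant pieces of $f'$. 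By Lemma \ref{L:f' constant is locally affine} and Theorem \ref{T:no lc into nv v2}, these pieces are finitely many intervals on which $f$ is affine, plus finitely many points. On each remaining $I_j$, the restriction $f|I_j$ is convex, nv-differentiable, and $f'(I_j)$ is dense in $\conv(f'(I_j))$. So Lemma \ref{L:curves and Diag v2} applies and gives a finite exceptional set $F_j$. Let $D$ be the union of the $F_j$ together with the affine slopes.

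\textbf{Step 2: locating $c$.} Fix $a<b$ in $I_j$ with $d:=m(a,b)\notin D$. Replace $I_j$ by the convex set $[a,b]$, or rather work with $L_d$ restricted to $[a,b]^2$. Convexity gives that $L_d^+$ is the graph of a strictly decreasing function, and $(a,b)\in L_d^+$. Following the proof of the lemma, the points of $L_d^+$ near the diagonal satisfy $x<c<y$, where $c$ is the unique diagonal point in the closure. The branch of $L_d^+$ through $(a,b)$ is the graph of a decreasing function, so it stays in the box $[a,b]^2$: its points $(x,y)$ satisfy $a\le x<c<y\le b$. Hence $a<c<b$, with $c$ nonvaluational. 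A subtlety here is to check that the lemma, applied to $f|[a,b]$, still excludes $d$. I would handle this by noting that the exceptional set $F$ in the lemma is defined as $m(I\times I)\setminus D_e$ for small $e$. One must argue that $d\in D_e$ for the restricted function, using that the branch of $L_d^+$ through $(a,b)$ reaches the diagonal inside $[a,b]^2$. This is the step I expect to be the main obstacle. My first attempt would be to use the monotonicity of $L_d^+$ and the fact that $d\in D_e$ for the whole $I_j$: near-diagonal points of $L_d^+$ lie on the same decreasing curve, and so between $(a,b)$ and the diagonal.

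\textbf{Step 3: $f'(c)=d$.} By the Remark, $f$ extends continuously to $c$, and $f'(c)$ exists as the limit of $m(x,c)$. For $(x,y)\in L_d^+$ close to $(c,c)$ we have $x<c<y$ and $m(x,y)=d$. Convexity then gives
\[
m(x,c)\le m(x,y)=d\le m(c,y).
\]
Letting $(x,y)\to(c,c)$ along $L_d$, both sides tend to $f'(c)$ by nv-differentiability at $c$. The left and right lateral limits agree, since $c$ is an interior point of $\conv(f'(I_j))$ where $f'$ is extended by continuity. Hence $f'(c)=d=\frac{f(b)-f(a)}{b-a}$.
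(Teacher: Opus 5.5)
Your architecture matches the paper's. You partition via Corollary \ref{C:partition of nv} so that each $f'(I_j)$ is dense in its convex hull, get the nonvaluational $c$ with $(c,c)\in\cl(L_d)$ from Lemma \ref{L:curves and Diag v2}, squeeze $d$ between the one-sided derivatives at $c$, and close the gap by density.

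The obstacle you flag in Step 2 is not real, because you never need to re-apply the lemma to $f|[a,b]$. Since $L_d^+$ (computed on all of $I_j$) is the graph of a strictly decreasing $g$ with $x<g(x)$, the intervals $[x,g(x)]$ are nested. The points of $L_d^+$ near $(c,c)$ satisfy $x<c<g(x)$ by the lemma, and their intervals are shorter than $b-a$, hence contained in $[a,b]$. This gives $a<c<b$, a point the paper's proof leaves implicit.

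The genuine gaps are in Step 3.

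\emph{Continuity of $f$ at $c$, with value in $\widebar K_{nv}$.} The Remark does not supply this as you use it, for three reasons:
\begin{itemize}
\item it is asserted without proof;
\item it only gives continuity on the convex hulls of the pieces of its own partition, which you never built into the $I_j$;
\item it only asserts that the one-sided limits of $m(x,c)$ exist, not that $f'(c)$ exists.
\end{itemize}
The paper gets continuity directly from $(c,c)\in\cl(L_d)$. Suppose $f(c^-)\neq f(c^+)$, or the common limit is valuational while $f$ is strictly monotone in the same direction on $c^-$ and $c^+$. The paper arranges that monotonicity by a preliminary partition; for convex $f$, splitting according to the sign of $f'$ would also do. In either case $|f(y)-f(x)|$ is bounded below by a positive element of $K$ for $x<c<y$ close to $c$. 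Hence $|m(x,y)|\to\infty$ as $(x,y)\to(c,c)$, contradicting $m=d$ at points of $L_d$ arbitrarily close to $(c,c)$.

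\emph{Equality of the one-sided derivatives.} The sentence ``both sides tend to $f'(c)$ by nv-differentiability at $c$'' assumes what is to be proved. Also, ``$c$ is an interior point of $\conv(f'(I_j))$'' confuses domain and image. Your inequality $m(x,c)\le d\le m(c,y)$, together with monotonicity of $m(\cdot,c)$ and $m(c,\cdot)$, only gives $f'_-(c)\le d\le f'_+(c)$. The missing step is the paper's. By convexity, $f'(c_1)<f'_-(c)\le f'_+(c)<f'(c_2)$ for all $c_1<c<c_2$ in $I_j$, so no value of $f'$ lies strictly between $f'_-(c)$ and $f'_+(c)$. Since $K$ is dense in $\widebar K_{nv}$ and $f'(I_j)$ is dense in $\conv(f'(I_j))$, this forces $f'_-(c)=f'_+(c)=d$.

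With these two arguments supplied, your proof coincides with the paper's.
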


\begin{proof}
First, we claim that after partitioning $I$ we may assume that for every $c\in I_{nv}$, either $f$ is strictly increasing on $c^-$ and $c^+$ or $f$ is strictly decreasing on both $c^-$ and $c^+$. Indeed, since $f$ is convex, it is strictly monotone, locally, at generic points of $K$, so by omitting finitely many points we may assume that $f$ is locally monotone at every point of $I$. Define the equivalence relation on $I$: $xEy$ if $f$ is strictly monotone on some open interval containing $[x,y]$. The $E$-classes must be convex subsets of $K$, and  by Theorem \ref{T:no lc into nv v2}, there are only finitely many $E$-classes whose endpoints (in $\widebar K$) are nonvaluational. Hence, after removing finitely many points from $I_{nv}$, and partitioning $I$ accordingly,  we may assume, without loss of generality, that $f$ is strictly increasing or decreasing on both $c^-$ and $c^+$, for all $c\in I_{nv}$.

By Corollary \ref{C:partition of nv}, we may partition $f'(I)$ into finitely many sets such that each one is dense in its $\widebar K$-convex hull. Using a weak o-minimality, we can therefore partition $I$ into finitely many convex $I_j$, such that on each $I_j$, the image $f'(I_j)$ is dense in $\conv(f'(I_j))$, as in the assumption of Lemma \ref{L:curves and Diag v2}. Thus, we assume now that $I$ satisfies these assumptions.

By the same lemma, for all $d\in m(I\times I)$, outside a finite set. there exists a nonvaluational $c$ such that for arbitrarily close $x,y$, with $x<c<y$, we have $m(x,y)=d$. (Note that, geometrically, this means that there are secant lines on the graph of $f$, arbitrarily close to $c$, whose slope is $d$).

We  claim that away from finitely many cuts $c\in I_{nv}$, $f$ can be extended continuously to $c$, as a function into $\widebar K_{nv}$, and in addition $f'(c)=d$, in the above sense. First note that the lateral limits of $f$ at $c$ exist in $\widebar K$ (by weak o-minimality) and must be equal. Indeed, if  the limits are not equal then for $x<c<y$, with $|x-y|\to 0$  we have $m(x,y)\to \infty$, contradicting the assumption that $(c,c)\in \cl(L_d)$.
Similarly, if the limit were a valuational cut, then since $f$ is, say, strictly increasing on both $c^-$ and $c^+$, then $m(x,y)\to \infty$, for $x<c<y$, with $|x-y|\to 0$. There can be at most finitely many $c\in I_{nv}$ where this can happen, by weak o-minimality.

Thus, $f$ can be continuously extended to $c$ and $f(c)$ (we still call this extension $f$), is nonvaluational. Next we claim that $f'(c)=d$.

By continuity of $f$,  for every $x_1<c$, $m(x_1,c)=\lim\limits_{x_2\to c^-}m(x_1,x_2)$, and also for every $x_2<c$, $m(x_1,x_2)<d=m(x_2,y_2)$, for some $y_2>c$. Hence, $f'(c^-)\leq d$.
Similarly, $f'(c^+)\geq d$. 

By the convexity of $f$, for each $c_1,c_2\in K$, if $c_1<c<c_2$ then $f'(c_1)< f'(c^-)\leq f'(c^+)< f'(c_2)$. Since $f'(I)$ is dense in $\conv(f'(I))$ it follows that $f'(c^-)=f'(c^+)=d$.
\end{proof}


   Note that, in the notation of the theorem, it could be that $0\in D$, so that the theorem does not imply a weakly o-minimal version of Rolle's theorem. 

    \bibliographystyle{plain}
    \bibliography{harvard}
    
    \end{document}